\newcommand\R{\mathbb R}
\newcommand\Z{\mathbb Z}
\newcommand\N{\mathbb N}
\newcommand\dist{\operatorname{dist}}
\newcommand\SO{\mathrm{SO}}
\newcommand\calH{\mathcal H}
\newcommand{\Oh}{\mathcal O}
\newcommand\betaeq{\beta_{\mathrm{eq}}}
\newcommand{\sgn}{\operatorname{sgn}}
\newcommand\Wdd{W_\mathrm{2D}}
\newcommand\Wddd{W_\mathrm{3D}}
\newcommand\delamy{b}
\newcommand\betacrit{\beta_\text{crit}}
  \newcommand\larc{\ell_\mathrm{arc}}
\newcommand\calL{\mathcal L}
\newcommand\energy{{\cal E}}
\newcommand\restr[2]{{%
		\left.\kern-\nulldelimiterspace %
		#1 %
		\right|_{#2} %
}}
\newtheorem{theorem}{Theorem}[section]
\newtheorem{proposition}[theorem]{Proposition}
\newtheorem{lemma}[theorem]{Lemma}
\theoremstyle{definition}
\theoremstyle{remark}
\newtheorem{remark}[theorem]{Remark}
\numberwithin{equation}{section}
\newcommand{\LM}[1]{\hbox{\vrule width.2pt \vbox to#1pt{\vfill \hrule width#1pt height.2pt}}}
\newcommand{\LL}{{\mathchoice
{\,\LM7\,}{\,\LM7\,}{\,\LM5\,}{\,\LM{3.35}\,}}}
\begin{document}
	\begin{center}
{\Large
Variational Modeling of Paperboard Delamination 
Under Bending}\\[5mm]
{\today}\\[5mm]
 {Sergio Conti$^{1}$, Patrick Dondl$^{2}$, Julia Orlik$^{3}$}\\[2mm]
{\em  $^{1}$  Institut f\"ur Angewandte Mathematik,
  Universit\"at Bonn,\\ 
  53115 Bonn, Germany}\\[1mm]
{\em    $^{2}$ Abteilung für Angewandte Mathematik,
 Albert-Ludwigs-Universität Freiburg, \\
 79104 Freiburg i. Br., Germany}\\[1mm]
 {\em $^{3}$ Fraunhofer-Institut für Techno- und Wirtschaftsmathematik ITWM\\ 
 67663 Kaiserslautern, Germany}
\end{center}
\begin{abstract}
\noindent We develop and analyze a variational model for multi-ply (i.e., multi-layered) paperboard. The model consists of a number of elastic sheets of a given thickness, which -- at the expense of an energy per unit area -- may delaminate. By providing an explicit construction for possible admissible deformations subject to boundary conditions that introduce a single bend, we discover a rich variety of energetic regimes. 
The regimes correspond to the experimentally observed: initial purely elastic response for small bending angle and the formation of a localized inelastic, delaminated hinge once the angle reaches a critical value. Our scaling upper bound then suggests the occurrence of several additional regimes as the angle increases.
The upper bounds for the energy are partially matched by scaling lower bounds.
\end{abstract}

	\section{Introduction and motivation}
Paperboard is an important engineering material, widely used for packaging, e.g., in the food industry. Due to its sustainability, paper-based materials have more recently gained in interest also for other applications \cite{Sim20}. Paperboard is essentially a comparatively thick  material made of processed wood pulp. Two types of paperboard can be distinguished, single-ply and multi-ply. The present article is concerned with multi-ply paperboard, which consists of multiple layers of paper bonded by an adhesive.

The deformation of paperboard is a complex process taking place on multiple scales. Deep drawing of paperboard to achieve a desired geometry -- similar to deep drawing of metals -- is an important technique that is the subject of current research \cite{LWO17}. For an overview of current modeling approaches, from the individual fiber level to the laminate structures in multi-ply paperboard, see \cite{Sim20}.

\begin{figure}
\begin{center}
\includegraphics[width=0.49\linewidth]{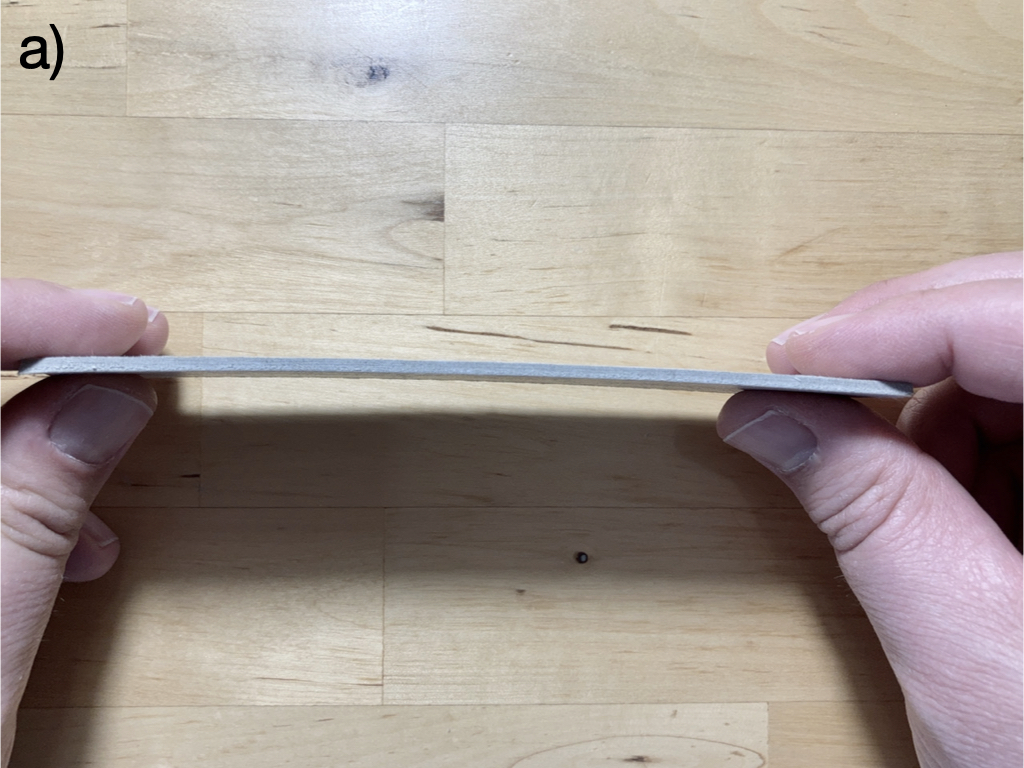}
\includegraphics[width=0.49\linewidth]{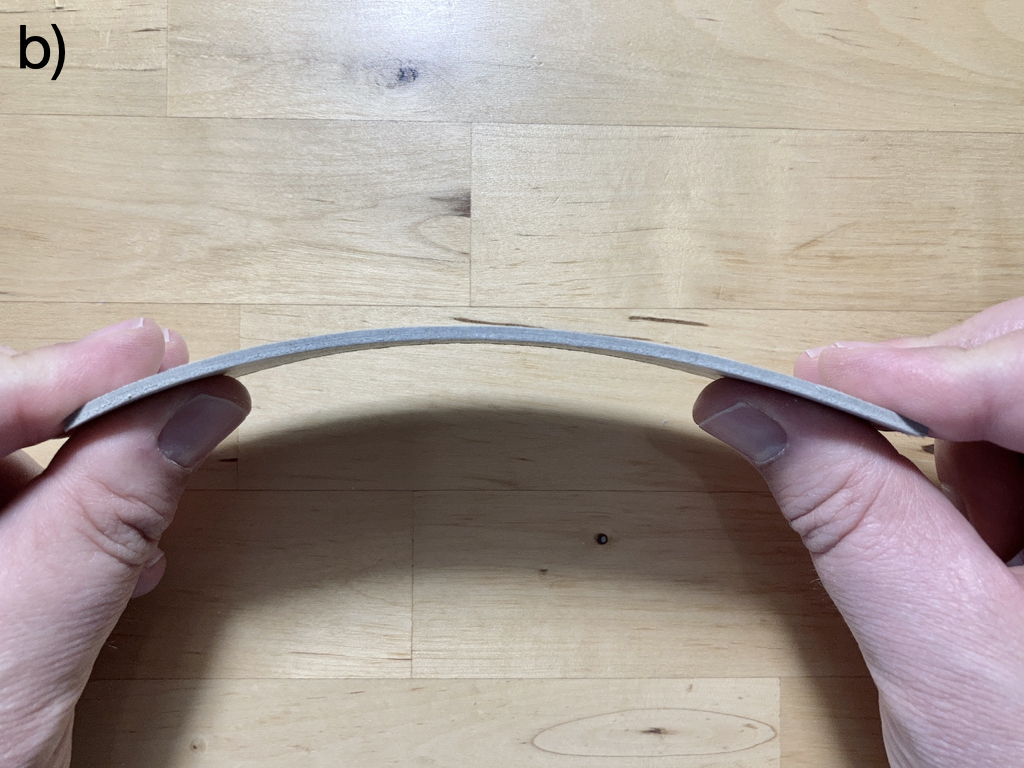}
\\[0.8mm]
\includegraphics[width=0.49\linewidth]{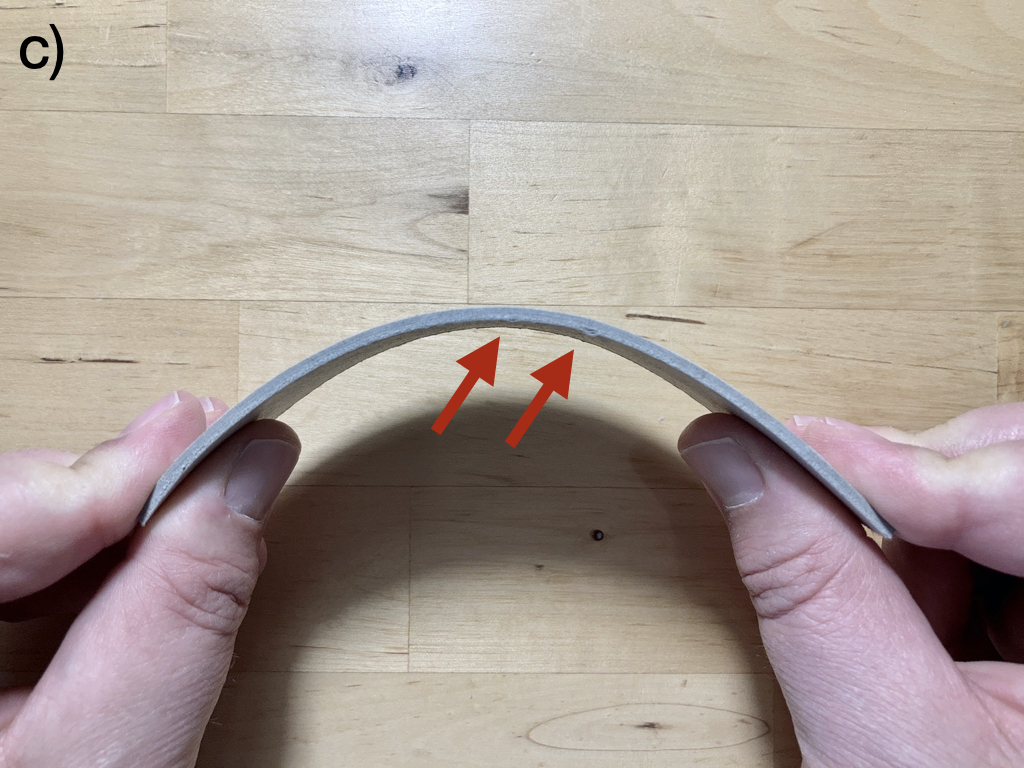} 
\includegraphics[width=0.49\linewidth]{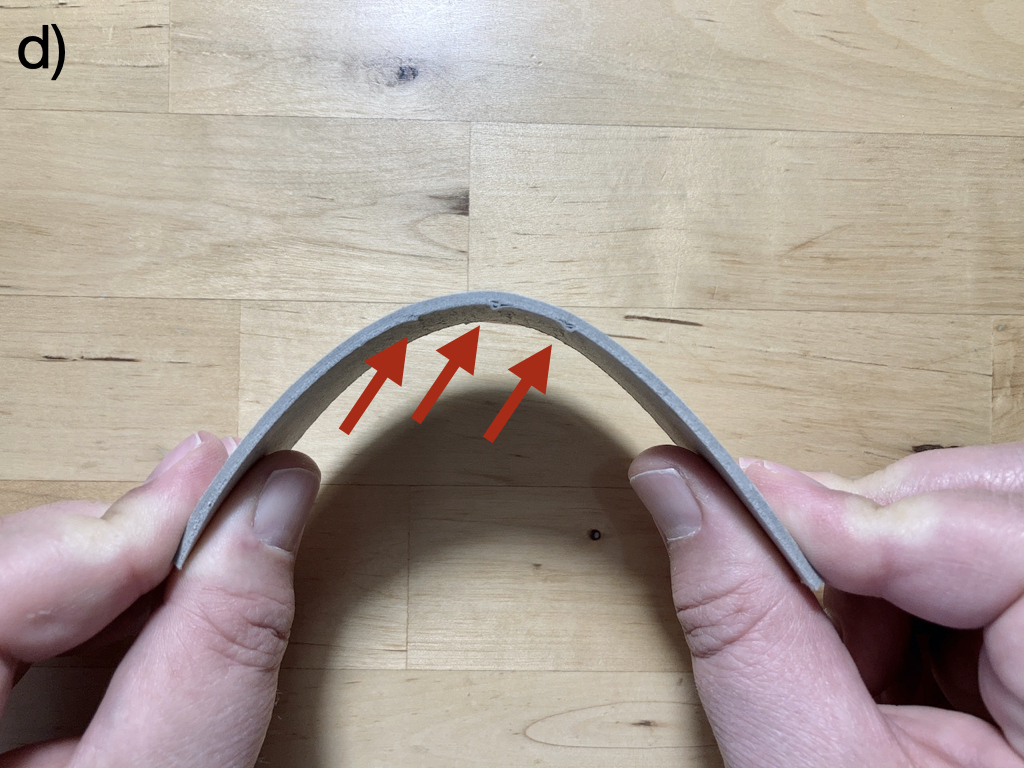}
\\[0.8mm]
\includegraphics[width=0.49\linewidth]{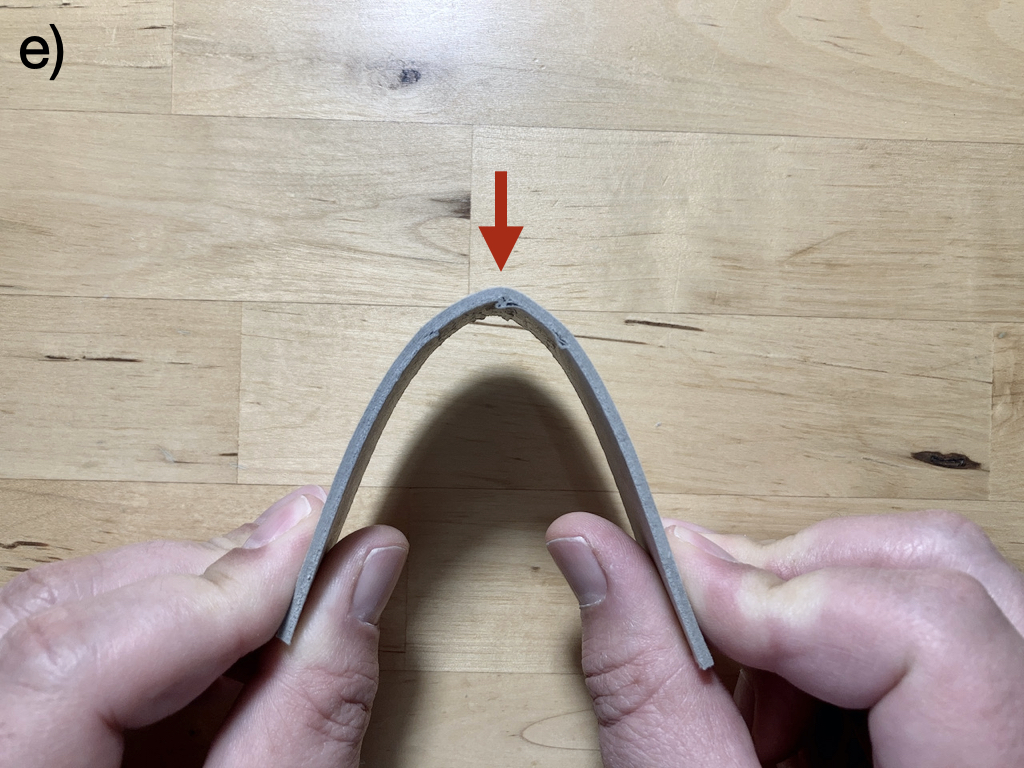}
\includegraphics[width=0.49\linewidth]{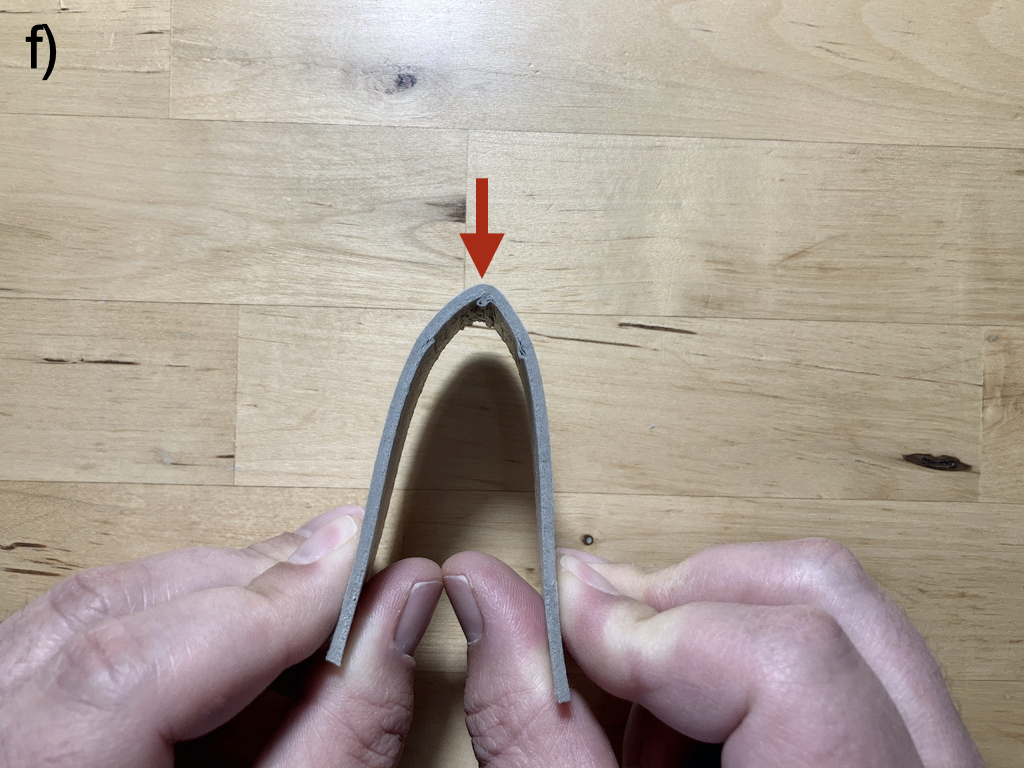}
\caption{Bending of multi-ply paperboard (BRAMANTE Buchbinderhartpappe 2mm) with increasing angle. a) unbent, b) purely elastic deformation, c,d) increasing delamination (red arrows), e),f) bending concentrates on delaminated hinge (red arrow). Images by D.\ Valainis.} \label{fig:bending_exp}
\end{center}
\end{figure}
\begin{figure}
\begin{center}
\includegraphics[width=0.48\linewidth]{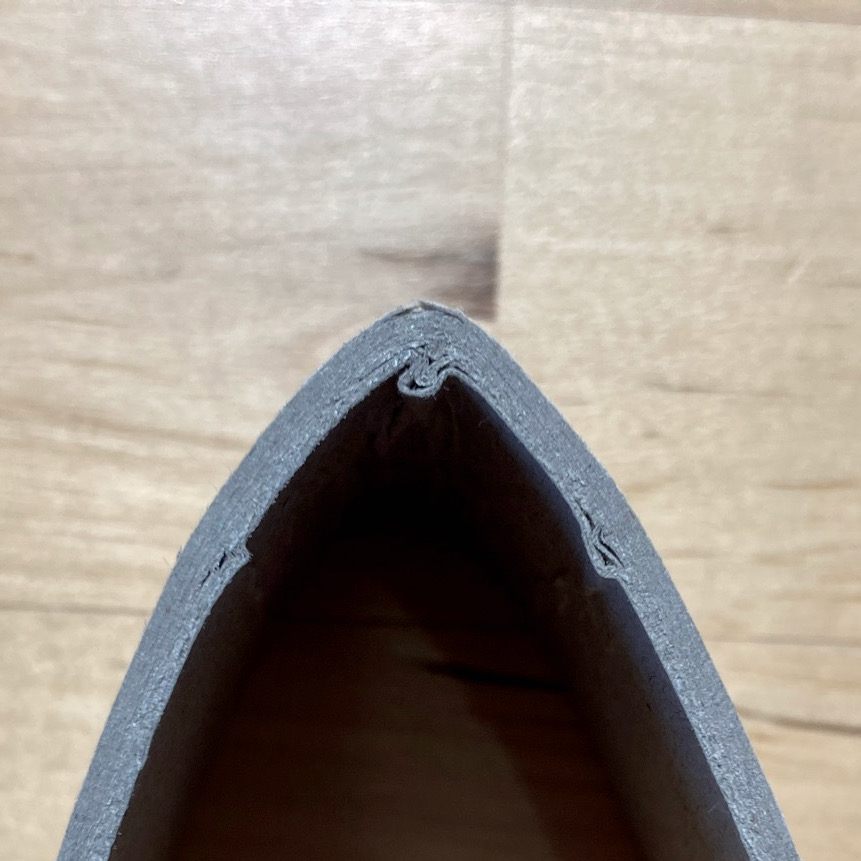}
\includegraphics[width=0.48\linewidth]{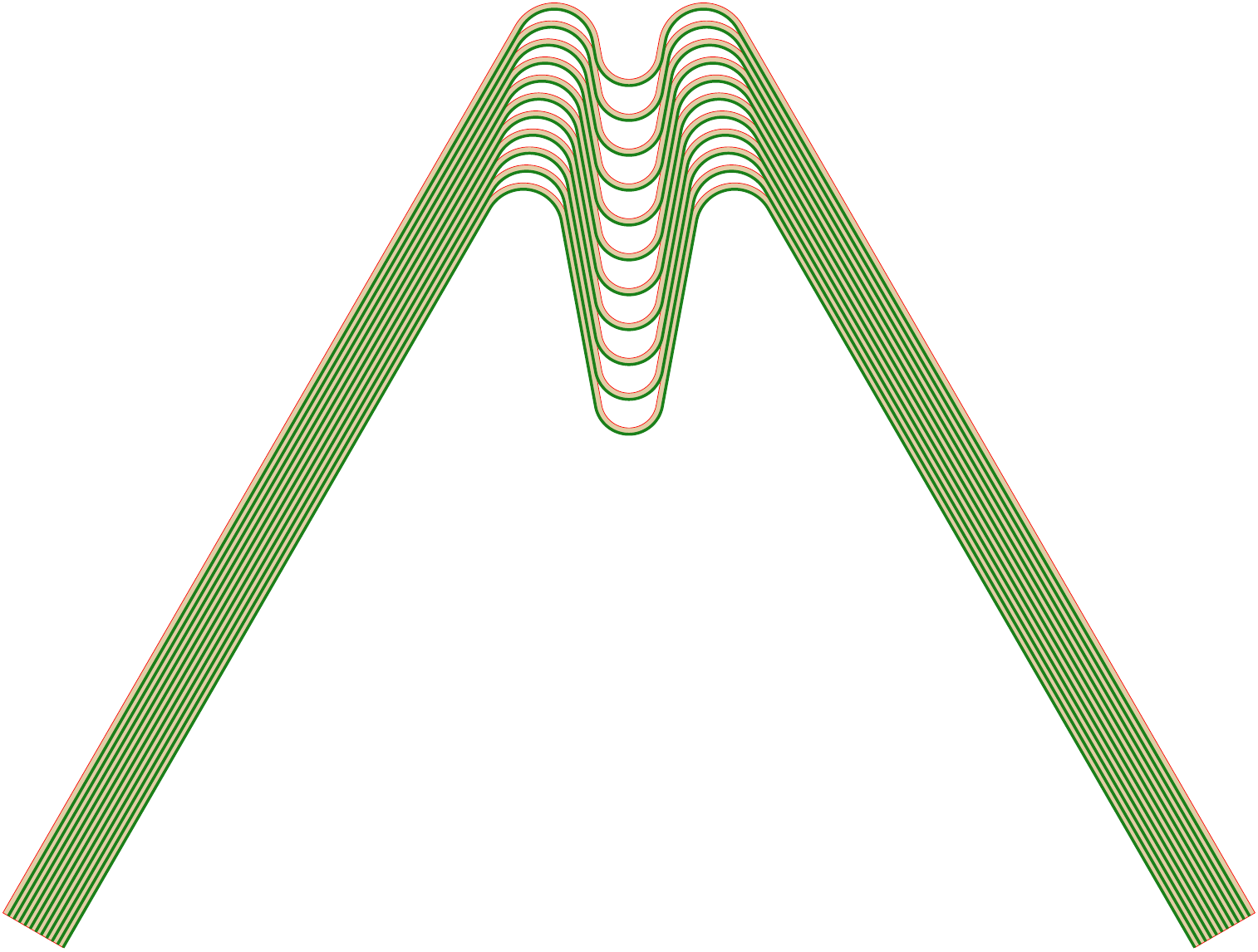}
\end{center}
\caption{Left: closeup of the hinge formed in the experiment shown in Figure \ref{fig:bending_exp}f). One can clearly see the concentration of the bending angle to the hinge as well as the delaminated layers of paperboard. Image by D.\ Valainis. Right: hinge construction with energetically optimal scaling.} \label{fig:hinge_exp}
\end{figure}

Of particular interest is the formation of individual hinges in multi-ply paperboard when it undergoes bending. In \cite{LWO17,Ost17, Hua11, BP09,BeexPeerlings2012}, the formation of such hinges has been closely examined. As one can see in Figure \ref{fig:bending_exp}, even in a simple experiment, such a hinge in multi-ply paperboard is characterized by a localized delamination of the individual sheets together with a localization of the deformation. A close up of the formed hinge is shown in Figure \ref{fig:hinge_exp} (left). One can clearly see that the delaminated layers on the inside of the bend bluckle, and thus release compressive stress. For experiments in a more controlled environment with an added crease to determine the exact location of the hinge, see, e.g., \cite[Figure 3]{BP09}. In particular in \cite{BP09,BeexPeerlings2012}, the experimental observations are complemented by a finite element model, where the delamination is treated using cohesive zones. A variational model, together with a rigorous mathematical treatment, resulting in a rich phase diagram of different energetic scaling regimes for multi-ply paperboard undergoing a simple bend, is provided in this article.

In our variational approach, the individual sheets are modeled using a nonlinearly elastic energy. The entire paperboard, but also of course the individual sheets of paper it consists of, are treated as slender elastic objects  \cite{Antman95,AudolyPomeau2010Buch}. More specifically, the rigorous methods used to derive plate energies from nonlinear bulk elasticity  \cite{LeDretRaoult95,FrieseckeJamesMueller2002, Lecumberry.2009} are used here to show lower scaling bounds for the deformed paperboard.

Delamination is treated as Griffith-type fracture, with a fixed energetic cost per delaminated unit area \cite{lawn1993fracture,bou-fra-mar}, but the fracture surface is restricted to the interfaces of the individual sheets of paper in the multi-ply paperboard. We refer to \cite{AmbrosioFuscoPallara} for an overview of the treatment of spaces of bounded variation which naturally occur in the mathematical treatment of fracture problems.

The competition between these two energies, a bulk elastic energy and a surface fracture energy, leads to the emergence of different scaling regimes, depending on parameters. In this sense, our article is inspired by the seminal work of Kohn and M\"uller \cite{KM94}. More closely related is the treatment of the blistering of thin films on a rigid substrate under compression \cite{BCDM00,JinSternberg2,BourneContiMueller2017}, where the formation of self-similar, branching channels was observed. The scaling regimes observed in the present work should be compared to the energy scaling for the folding of single sheets of paper \cite{Lobkovsky95,Venkataramani2004,ContiMaggi2008}.

Our main results can be summarized as follows. In our model, multi-ply paperboard exhibits different scaling regimes when subjected to bending boundary conditions, depending on the bending angle $\alpha$, the paperboard thickness $h$ and length $L$, number of sheets $N$ and Griffith energy coefficient $\gamma$.
For simplicity we focus here on the regime described by \eqref{eqecasessortedinallpha} below, which requires $h> \gamma N^3$, $h^5 N^5 > \gamma L^4$, and $h^5 < \gamma L^4 N^3$. Corresponding results for the other cases are discussed in Section~\ref{sec:upper}, see in particular Remark~\ref{eqmarkhsmall} and Remark~\ref{remarkubfll}.

\begin{itemize}
\item[-] First, for very small bending angle, no delamination occurs and a usual, energy minimizing single arch is formed. This purely elastic deformation, which corresponds to the bending of a thin plate, has an energy of order $\frac{\alpha^2h^3}{L}$.
\item[-] Then, sharply localized delamination occurs for all layers at a length scale $\frac{\alpha^{1/3}h^{4/3}}{\gamma^{1/3} N}$. The energy scales as $\alpha^{1/3}\gamma^{2/3}h^{4/3}$.
\item[-] For even larger bending angle, the delamination length increases to $\frac{\alpha h^{3/2}}{\gamma^{1/2}N^{3/2}}$ and the energy scales as $\frac{\alpha  \gamma^{1/2} h^{3/2}}{N^{1/2}}$.
\item[-] Finally, for larger bending angles, if the cost for delamination is small the entire paperboard may delaminate, which results in the standard bending energy for each of the $N$ sheets individually, yielding a total energy of $\frac{\alpha^2 h^3}{L N^2}$.
\end{itemize}
The occurrence of each individual regime depends on the magnitude of the model parameters. The second and third regime above correspond to the localized hinges observed in experiment. For most regimes, we provide rigorous lower bounds contingent on some assumptions on the delaminated sets. Explicit constructions realizing all energetic regimes by ensuring that individual delaminated sheets are provided. These constructions introduce an additional fold on the inside of the hinges, which allows the individual delaminated sheets to deform isometrically, as seen in the buckling visible in Figure \ref{fig:hinge_exp}, where the experiment is shown on the left, and a construction with energetically optimal scaling is shown on the right.

The remainder of this article is organized as follows. In Section \ref{sec:model} we introduce our mathematical model. Scaling upper and lower bounds for the energy are proved in Sections \ref{sec:upper} and \ref{sec:lower}, respectively. We close with a brief discussion in Section \ref{sec:discussion}.

\section{Problem setting and model} \label{sec:model}

\subsection{The three-dimensional model}

We consider a paperboard sample of thickness $h$, consistings of $N$ layers, so that at most $N-1$ delamination surfaces are possible. We let $2L$ be the length of the sample in the direction of bending, and consider a section of unit length in the third direction, which will be irrelevant for our results. We thus obtain a reference configuration $\Omega_h:=(-L,L)\times(0,h)\times(0,1)$. 

The set of admissible deformations consists of maps 
$U:\Omega_h\to\R^3$ which jump only on (a subset of) $N-1$ prescribed planes which correspond to the
delamination surfaces between layers, in the sense that 
the set $J_U$ of
jump points of $U$ obeys
(up to null sets)
\begin{equation}\label{eqjumpplanes}
J_U\subset \Omega_h\cap \{x: x_2\in \frac{h}{N}\Z\}=(-L,L)\times\{\frac hN, \frac{2h}N, \dots, \frac{(N-1)h}N\}\times (0,1).
\end{equation}
Outside its jump set the function is assumed to have a weak gradient which is square integrable. 
Mathematically, this means that $U$ belongs to $ SBV^2_N(\Omega_h;\R^3)$, which we define as the space of functions in $SBV(\Omega_h;\R^3)$
such that \eqref{eqjumpplanes} holds and $\nabla U \in L^2(\Omega_h;\R^{3\times 3})$. We recall that $SBV(\Omega_h;\R^3)$ is the set of special functions of bounded variation, i.e., the set of integrable functions $U:\Omega_h\to\R^3$ such that the distributional gradient $DU$ is a measure  of the form
$DU=\nabla U\calL^3+[U]\otimes \nu \calH^2\LL J_U$, with $\nabla U\in L^1(\Omega_h;\R^{3\times 3})$, $J_U$ the $\calH^2$-rectifiable jump set of $U$, $[U]:J_U\to\R^3$ its jump, and $\nu:J_U\to S^2$ the normal to $J_U$. We refer to
\cite{AmbrosioFuscoPallara} for details of this definition and the relevant properties of the space.

In order to introduce a boundary condition for the hinge, we prescribe the deformation gradient on both ends of the domain, i.e.,   
\begin{equation}\label{eqboundarysinglefold}
DU(x)= \hat R_\alpha \text{ for } x_1<-\frac{L}{2} \,,\hskip1cm
DU(x)= \hat R_{-\alpha} \text{ for } x_1>\frac{L}{2} \,,
\end{equation}
where $\hat R_\alpha\in\SO(3)$ is a rotation of angle $\alpha$ around $x_3$,
\begin{equation}
    \hat R_\alpha = \begin{pmatrix} \cos\alpha & -\sin\alpha & 0\\
    \sin\alpha & \cos\alpha & 0\\
    0 & 0 & 1
    \end{pmatrix}.
\end{equation}

The energy of a deformation $U\in SBV^2_N(\Omega_h;\R^3)$ consists of the sum of an elastic energy, depending on the absolutely continuous part of the deformation gradient $\nabla U$,  and a delamination energy, which is proportional to the total area of the delaminated set $J_U$. Specifically,
\begin{equation}
\energy^\mathrm{3D}_h[U]:=\int_{\Omega_h} \Wddd(\nabla U) dx + \gamma {\cal H}^2(J_U),\;\;\text{ for } U\in SBV_N^2(\Omega_h;\R^3).
	\end{equation} 
Here $\Wddd:\R^{3\times 3}\to[0,\infty)$ is an elastic energy density which obeys for some $c>0$
\begin{equation}\label{eqgrowthwdd}
    \frac1c \dist^2(\xi, \SO(3)) \le \Wddd (\xi) \le c \dist^2(\xi,\SO(3)) \text{ for all } \xi\in \R^{3\times 3}.
\end{equation}
The parameter $\gamma>0$ is the delamination energy per unit area, which in the present setting has the dimensions of a length, 
and	${\cal H}^2$ denotes the Hausdorff measure.
In particular, ${\cal H}^2(J_U)$ is the area of the delaminated set.

\subsection{Reduction to two dimensions}

In this paper we assume that no structure arises in the $x_3$-direction, in the sense that the deformation $U$ takes the form
\begin{equation}
    U(x_1,x_2,x_3)=u(x_1,x_2)+x_3e_3
\end{equation}
for some $u\in SBV^2_N(\omega_h;\R^2)$, with $\omega_h:=(-L,L)\times (0,h)$.
The latter set is defined as the set of $SBV$ functions such that $\nabla u\in L^2(\omega_h;\R^{2\times 2})$ and
\begin{equation}\label{eqjumpplanes2d}
J_u\subset \{x\in \omega_h: x_2\in \frac{h}{N}\Z\}=(-L,L)\times\{\frac{h}{N}, \frac{2h}{N}, \dots, \frac{(N-1)h}{N}\},
\end{equation}
and one easily sees that $U\in SBV^2_N(\Omega_h;\R^3)$ if and only if $u\in SBV^2_N(\omega_h;\R^2)$.

The energy then reduces to
\begin{equation}
\energy_h[u]:=\energy^\mathrm{3D}_h[U]=\int_{(-L,L)\times (0,h)} \Wdd(\nabla u) dx+ \gamma \calH^1(J_u), \text{ for } u\in SBV^2_N(\omega_h;\R^2),
\end{equation}
where $\Wdd$ is defined by $\Wdd(\xi):=\Wddd(\xi + e_3\otimes e_3)$ for  $\xi\in \R^{2\times 2}$, 
and ${\cal H}^1$ denots the one-dimensional Hausdorff measure, which measures length. Here and below we identify 
$\xi\in \R^{2\times 2}$ with the matrix $\hat\xi\in\R^{3\times 3}$ characterized by $\hat\xi_{ij}=\xi_{ij}$ for $i,j=1,2$, zero otherwise, and correspondingly for vectors. From \eqref{eqgrowthwdd} one easily obtains that the two-dimensional reduced energy obeys
\begin{equation}\label{eqW2dbounds}
    \frac1c \dist^2(\xi, \SO(2)) \le \Wdd (\xi) \le c \dist^2(\xi,\SO(2)) \text{ for all } \xi\in \R^{2\times 2}.
\end{equation}

The boundary condition \eqref{eqboundarysinglefold} in  turn is equivalent to
\begin{equation}\label{eqboundarysinglefold2d}
Du(x)= R_\alpha \text{ for } x_1<-\frac{L}{2} \,,\hskip1cm
Du(x)= R_{-\alpha} \text{ for } x_1>\frac{L}{2} \,,
\end{equation}
where we denote by $R_\alpha\in\SO(2)$ the two-dimensional rotation of angle $\alpha$,
\begin{equation}\label{eqdefRalpha2d}
    R_\alpha:= \begin{pmatrix} \cos\alpha & -\sin\alpha \\
    \sin\alpha & \cos\alpha 
    \end{pmatrix}.
\end{equation}

\section{Scaling upper bounds for the energy} \label{sec:upper}

	 The objective is to find {\bf critical bending angle} and {\bf energy bounds} for interplay of bending and delamination in terms of  
	$h$, $L$, $\gamma$.

\subsection{First construction: Plate bending without delamination.} \label{sec:no-delam}

We first consider the case that no delamination occurs. Then it is natural to expect that the deformation has constant curvature. 
We use here the classical construction for plate theories, with some simplifications which do not alter the scaling of the energy (for example, 
in \eqref{eqdefufromvplate} we simply use $x_2$ as a factor, and not 
 $x_2-\frac h2$). We refer to \cite{FrieseckeJamesMueller2002} for a more general mathematical treatment.
\begin{lemma}\label{lemmacontinuousbending}
For all $h,L>0$ and $\alpha\in[0,\frac\pi2]$ there is a map $u\in W^{1,\infty}(\omega_h;\R^2)$ which obeys the boundary condition  \eqref{eqboundarysinglefold2d} and such that
\begin{equation}
      \energy_h[u]\le c\frac{\alpha^2h^3}{L}.
\end{equation}
The map $u$ is injective, and
$W^{1,\infty}(\omega_h;\R^2)\subseteq SBV^2_N(\omega_h;\R^2)$ for all $N$.
\end{lemma}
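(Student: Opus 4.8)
The plan is to construct $u$ explicitly by first bending the midline into a circular arc of constant curvature and then extending the deformation through the thickness, truncating the bent region to a compact part of $\omega_h$ so that the prescribed affine boundary conditions \eqref{eqboundarysinglefold2d} hold exactly near $x_1=\pm L$. Concretely, I would pick a length scale $\lambda\in(0,L/2)$ over which the bend is distributed, set the midline curvature to $\kappa = c_0\alpha/\lambda$ (chosen so the total turning over the bent region equals $2\alpha$, matching the two boundary rotations $R_{\pm\alpha}$), and define a planar curve $v:(-L,L)\to\R^2$ that is a straight segment for $x_1<-\lambda$, a circular arc of curvature $\kappa$ for $|x_1|<\lambda$, and a straight segment for $x_1>\lambda$, joined in a $C^{1,1}$ fashion. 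Then I would set
\begin{equation}\label{eqdefufromvplate}
u(x_1,x_2) := v(x_1) + x_2\, \nu(x_1),
\end{equation}
where $\nu(x_1)$ is the unit normal to the curve $v$ at $x_1$ (rotated tangent). This is the standard Kirchhoff-type ansatz, and the stated simplification is that one uses $x_2$ rather than $x_2-h/2$ as the offset factor.

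The key computation is to bound $\Wdd(\nabla u)$ pointwise via \eqref{eqW2dbounds}, i.e. to estimate $\dist(\nabla u, \SO(2))$. Writing $t(x_1)=v'(x_1)$ for the unit tangent and $\nu=t^\perp$, one has $\nabla u = (t + x_2\nu', \nu) = ((1-\kappa x_2)t, \nu)$ on the arc and $\nabla u \in \SO(2)$ on the two straight pieces. Since $(t,\nu)$ is a rotation, $\dist(\nabla u,\SO(2)) \le |\kappa x_2| \le \kappa h$ on the bent region, which has area $O(\lambda h)$; hence the elastic energy is $O(\kappa^2 h^2 \cdot \lambda h) = O((\alpha/\lambda)^2 h^3 \lambda) = O(\alpha^2 h^3/\lambda)$. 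Choosing $\lambda$ comparable to $L$ (e.g. $\lambda = L/2$, allowed since $\alpha\le\pi/2$ keeps $\kappa h = O(\alpha h/L)$ bounded and the construction well-defined) gives the claimed bound $c\alpha^2 h^3/L$. Since $u\in W^{1,\infty}$ there is no jump set, so $\calH^1(J_u)=0$ and the delamination term vanishes. The boundary conditions hold by construction because $\nabla u = (t,\nu)$ is exactly $R_{\pm\alpha}$ on the straight end pieces.

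For the remaining assertions: $u\in W^{1,\infty}(\omega_h;\R^2)$ follows because $v\in C^{1,1}$ with bounded curvature and $\nu$ inherits the same regularity, so $\nabla u$ is bounded; and $W^{1,\infty}(\omega_h;\R^2)\subseteq SBV^2_N(\omega_h;\R^2)$ is immediate since Lipschitz functions lie in $SBV$ with empty jump set (so \eqref{eqjumpplanes2d} holds trivially) and a bounded gradient is in particular in $L^2(\omega_h;\R^{2\times 2})$. Injectivity is the one point requiring a small argument: on each straight piece $u$ is affine and injective, on the arc the map $(x_1,x_2)\mapsto v(x_1)+x_2\nu(x_1)$ is injective provided $x_2$ stays below the radius of curvature $1/\kappa$, i.e. provided $h < 1/\kappa \sim L/\alpha$, which holds after possibly shrinking $\lambda$ (equivalently lowering $\kappa$) — for $\alpha\le\pi/2$ one can always arrange $\kappa h$ small enough while keeping $\lambda\le L/2$, at worst worsening the constant $c$; and across the $C^1$ joins the images of adjacent pieces meet only along the shared fiber, so global injectivity follows from a standard monotonicity-of-the-arc argument (the arc subtends total angle $2\alpha\le\pi$, so the offset curves do not self-intersect).

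The main obstacle — really the only non-formulaic point — is bookkeeping the $C^{1,1}$ matching at $x_1=\pm\lambda$ and verifying global injectivity there, since the naive arc-plus-segments curve is only $C^1$ and one must check that the perturbation needed to smooth the curvature jump does not spoil either the energy scaling (it does not, being supported on a region of size $O(\lambda)$ with curvature still $O(\kappa)$) or injectivity. Everything else is a direct substitution into \eqref{eqW2dbounds}.
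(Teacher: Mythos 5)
Your construction is the same as the paper's (circular arc over a length comparable to $L$, tangent straight pieces, Kirchhoff offset $u=v+x_2(v')^\perp$), and the energy estimate, the $W^{1,\infty}$ regularity, the vanishing jump set, and the inclusion $W^{1,\infty}\subseteq SBV^2_N$ are all handled correctly. Two remarks on the details, one of which is a genuine gap. First, the ``main obstacle'' you identify is not one: the arc-plus-tangent-segments curve is already $C^{1,1}$, i.e.\ $W^{2,\infty}$ with $|v''|\le 2\alpha/L$ a.e., and that is all the energy computation needs; no smoothing of the curvature jump at $x_1=\pm\lambda$ is required (the paper uses exactly this piecewise curve).

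The gap is in the injectivity step. Your formula $\nabla u=((1-\kappa x_2)t,\nu)$ corresponds to offsetting the thickness toward the concave side of the arc, so injectivity (and orientation preservation) indeed requires $h<1/\kappa$. But your proposed remedy --- ``shrinking $\lambda$ (equivalently lowering $\kappa$)'' --- is backwards and cannot work: the total turning angle forces $\kappa\lambda\sim\alpha$, so lowering $\kappa$ means \emph{increasing} $\lambda$, and since $\lambda\le L/2$ you always have $\kappa\ge 2\alpha/L$, hence $\kappa h\ge 2\alpha h/L$, which is not below $1$ for all $h,L>0$ as the lemma requires (it is stated without any relation between $h$ and $L$). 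The correct fix is to place the material on the convex side of the arc, i.e.\ choose the bending direction/normal so that the factor is $1+\kappa x_2$ rather than $1-\kappa x_2$. This is what the paper's explicit formula does: in the arc region $u(x_1,x_2)=\bigl(\tfrac{L}{2\alpha}+x_2\bigr)(v')^\perp(x_1)$, a polar-type map with radius $\tfrac{L}{2\alpha}+x_2$ strictly increasing in $x_2$ and angular variable $2\alpha x_1/L\in(-\alpha,\alpha)$ injective in $x_1$, so injectivity holds for every $h,L>0$ with no constraint and no case distinction; moreover this sign is the one consistent with the boundary rotations $R_{\pm\alpha}$ and the convention $(a_1,a_2)^\perp=(-a_2,a_1)$. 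With that sign corrected (and the unnecessary smoothing dropped), your argument coincides with the paper's proof.
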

\begin{proof}
We first fix an arc of circle for the central part,
\begin{equation}
    f(x_1):=
    \frac {L}{2\alpha} 
    \begin{pmatrix} \sin (2\alpha x_1/L)\\ \cos (2\alpha x_1/L) 
    \end{pmatrix},
\end{equation}
which obeys $|f'(x_1)|=1$ for all $x_1$, 
and then extend it piecewise affine in the two boundary regions, setting
\begin{equation}
    v(x_1):=
    \begin{cases}
    f(-\frac L2) + (x_1+\frac L2) f'(-\frac L2), & \text{ if }-L<x_1<-\frac L2,\\
    f(x_1), & \text{ if } -\frac L2\le x_1\le \frac L2,\\
    f(\frac L2) + (x_1-\frac L2) f'(\frac L2), & \text{ if }\frac L2<x_1<L.
    \end{cases}
\end{equation}
One easily verifies that  $v\in W^{2,\infty}((-L,L);\R^2)$, with $|v'|=1$ and  $|v''|\le 2\alpha/L$ almost everywhere.
We  then define $u:\omega_h\to\R^2$ by
\begin{equation}\label{eqdefufromvplate}
    u(x_1,x_2):=v(x_1) + x_2 (v')^\perp(x_1),
\end{equation}
where $(a_1,a_2)^\perp:=(-a_2,a_1)$ denotes counterclockwise rotation by 90 degrees.
One easily verifies that $u\in W^{1,\infty}(\omega_h;\R^2)$, $Du(x_1,x_2)=R_{\mp\alpha}$ for $\pm x_1\in (\frac L2,L)$, and, for $x_1\in (-\frac L2, \frac L2)$
\begin{equation}
    Du(x)=v'(x_1)\otimes e_1 + (v')^\perp(x_1) \otimes e_2 + x_2 (v'')^\perp(x_1)  \otimes e_1
\end{equation}
	so that $\dist(Du(x),\SO(2))\le |x_2|\, |v''(x_1)|$. Recalling the upper bound in 
	\eqref{eqW2dbounds} we obtain the desired bound
	\begin{equation}\label{eqestenergyplate}
	\begin{split}
\energy_h[u]\le& \int_{-L}^L \int_{0}^{h} c | x_2 v''(x_1) |^2 dx_2 dx_1\\
	      =&  L \int_{0}^{h} c\left( \frac{2\alpha x_2}{L}\right)^2 dx_2
	      \le c\frac{\alpha^2 h^3}{L}.
	      \end{split}
	\end{equation}
	Injectivity can be easily verified from the definition of $u$.
	\end{proof}
	
\subsection{Continuous, piecewise affine construction}
\label{secpwaffine}
\newcommand\ucaff{u^\mathrm{cpa}}
We start by constructing a continuous piecewise affine map $\ucaff$ that illustrates the basic structure of the deformation. 
This  construction is not admissible for the functional considered here, but represents a deformation for the limiting case $N\to\infty$.
Before starting we introduce the notation
\begin{equation}
    R_\varphi := \begin{pmatrix}\cos \varphi & -\sin \varphi  \\  \sin \varphi & \cos \varphi  \\ \end{pmatrix} \text{ for } \varphi\in\R.
\end{equation}

We fix $\alpha\in(0,\frac\pi2)$, $h,L>0$.
We seek a map $\ucaff$  that obeys the boundary conditions \eqref{eqboundarysinglefold2d}, and that is symmetric with respect to the $x_2$-axis, in the sense that
\begin{equation}\label{eqsymm}
    \ucaff(-x_1,x_2)=\begin{pmatrix} -\ucaff_1(x_1,x_2)\\ \ucaff_2(x_1,x_2) \end{pmatrix}.
\end{equation}
Therefore we can focus on $\{x_1\ge0\}$, provided that we impose the condition $ \ucaff_1(0,x_2)=0$.
We fix a parameter $\zeta\in(0,L/(2h)]$, and assume that 
\begin{equation}
    D\ucaff=R_{-\alpha} \text{ for } \zeta (h-x_2)<x_1<L, \hskip5mm 0<x_2<h,
\end{equation}
where $R_{-\alpha}$ was defined in \eqref{eqdefRalpha2d} (see Figure~\ref{fig:pw_aff} for a sketch of the geometry).
This ensures that the boundary condition \eqref{eqboundarysinglefold2d} is fulfilled.
In the region $0<x_1<\zeta (h-x_2)$ the material is allowed to shear and to open across the possible delamination lines, and of course to rotate by some angle $\beta$ to be determined. However, the non-interprenetration condition prevents volumetric compression. In other words we assume that the deformation gradient takes the form
\begin{equation}\label{eqFRbetaa1a2}
F^*:=R_\beta \begin{pmatrix}  1 & a_1\\ 0 & a_2 \end{pmatrix} 
\end{equation}
for some $a_1\in\R$ (representing shear) and $a_2\in[1,\infty)$ (representing opening across the delamination lines). 
The symmetry condition \eqref{eqsymm} requires that 
$\ucaff_1(0,x_2)=0$ for all $x_2\in(0,h)$, which
-- given $\ucaff_1(0,h)=0$ -- 
is equivalent to
$F^*_{12}=0$ and therefore to
$a:=(a_1,a_2)=|a|(\sin\beta,\cos\beta)$. Denoting $d:=|a|$ we obtain
\begin{equation}
F^*=R_\beta \begin{pmatrix}  1 & d \sin\beta \\ 0 & d\cos\beta \end{pmatrix} 
= \begin{pmatrix}  \cos\beta  & 0 \\ \sin\beta & d \end{pmatrix} 
\end{equation}
with the condition $d\cos\beta\ge1$.
The construction is concluded if we impose continuity across the line $\{x_1=\zeta(h-x_2)\}$, which is equivalent to
\begin{equation}
    0=(R_{-\alpha}-F^*) \begin{pmatrix} \zeta \\ -1 \end{pmatrix} 
    =\begin{pmatrix}
    \zeta\cos\alpha-\sin\alpha -\zeta\cos\beta\\
    -\zeta\sin\alpha-\cos\alpha-\zeta\sin\beta+d
    \end{pmatrix}.
\end{equation}
In turn, this can be rewritten as the two conditions
\begin{equation}\label{eqpcadefgamma}
    \zeta=\frac{\sin\alpha}{\cos\alpha-\cos\beta}
\end{equation}
and
\begin{equation}\label{eqpcadefd}
    d=\cos\alpha + \zeta(\sin\beta+\sin\alpha) = \frac{1-\cos\alpha\cos\beta+\sin\alpha\sin\beta}{\cos\alpha-\cos\beta}
\end{equation}
which define $\zeta$ and $d$ in terms of the angle $\beta$.
It remains to verify the  conditions $a_2=d\cos\beta\ge1$ and $0\le \zeta\le L/(2h)$, which limit the admissible choices of the angle $\beta$.  We observe that this condition is needed to ensure injectivity of the final construction.
We assume that $\alpha,\beta\in(0,\frac\pi2)$; by $\zeta>0$ we necessarily have $\beta>\alpha$. 

The condition $d\cos\beta\ge1$ can be rewritten as
\begin{equation}\label{eqdeffalhabeta}
   f_\alpha(\beta):=  \frac{1-\cos\alpha\cos\beta+\sin\alpha\sin\beta}{\cos\alpha-\cos\beta}\cos\beta\ge1.
\end{equation}
A short computation shows that 
\begin{equation}\label{fabmp}
   f_\alpha(\beta)=  \frac{\sin \frac{\alpha+\beta}{2}}{\sin \frac{\beta-\alpha}{2}}\cos\beta 
\end{equation}
and
\begin{equation}
   f_\alpha'(\beta)=  \frac{(\cos\beta-\cos\alpha)\sin\beta - \cos\beta\sin\alpha}{2\sin^2 \frac{\alpha-\beta}{2}}<0
\end{equation}
so that for any $\alpha\in(0,\frac\pi2)$ the function $f_\alpha:(\alpha,\frac12\pi]\to\R$ is strictly decreasing
with $f_\alpha(\frac\pi2)=0$ and $f_\alpha(\alpha^+)=\infty$. Therefore we can define 
 $\betaeq(\alpha)\in (\alpha, \frac\pi2)$ as the unique solution 
 to $f_\alpha(\beta)=1$. By 
 the implicit function theorem, $\betaeq\in C^1((0,\frac\pi2))$, and from
 \begin{equation}
  \frac{\partial f_\alpha(\beta)}{\partial \alpha} = 
\frac{  \sin\beta\cos\beta}{1-\cos(\alpha-\beta)}>0
 \end{equation}
we obtain  $\betaeq'>0$. The condition $d\cos\beta\ge1$ is equivalent to $\beta\in (\alpha,\betaeq(\alpha)]$.
We summarize and extend these results in the following statement.

\begin{lemma}\label{lemmabetaequcaff}
(i) There is a continuous, increasing function $\betaeq:(0,\frac\pi2)\to(0,\frac\pi2)$ such that $\alpha<\betaeq(\alpha)$ for all $\alpha$ and
\begin{equation}\label{eqdeffalhabeta2}
    \frac{1-\cos\alpha\cos\betaeq(\alpha)+\sin\alpha\sin\betaeq(\alpha)}{\cos\alpha-\cos\betaeq(\alpha)}\cos\betaeq(\alpha)=1.
\end{equation}
It obeys
\begin{equation}\label{eqbetaeqsmallalpha}
    \betaeq(\alpha)=4^{1/3}\alpha^{1/3}+o(\alpha^{1/3}) \hskip5mm\text{ as } \alpha\to0.
\end{equation}

(ii) For any $\alpha,\beta\in(0,\frac\pi2)$, $h,L>0$, if 
\begin{equation}
    \alpha<\beta\le\betaeq(\alpha) \text{ and } \zeta:=\frac{\sin\alpha}{\cos\alpha-\cos\beta} \le \frac{L}{2h}
\end{equation}
then the map $\ucaff:\overline\omega_h\to\R^2$ defined by
\begin{equation}\label{eqdefucaff}
    \ucaff(x):=\begin{cases}
    \begin{pmatrix} x_1\cos\beta \\ |x_1|\sin\beta +dx_2 \end{pmatrix},  & \text{ if } |x_1|<\zeta (h-x_2),\\
\begin{pmatrix} x_1\cos\alpha + (x_2-h)\sin\alpha \sgn x_1\\ 
-|x_1|\sin\alpha + (x_2-h)\cos\alpha +dh  \end{pmatrix}, & \text{ if }  \zeta (h-x_2)\le |x_1|\le L,
    \end{cases}
\end{equation}
is continuous, injective, obeys the boundary conditions and $|\partial_1 \ucaff|=1$ almost everywhere.
\end{lemma}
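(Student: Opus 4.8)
The plan is to verify in turn each of the four asserted properties of $\ucaff$: continuity, the boundary conditions, $|\partial_1\ucaff|=1$ a.e., and injectivity. Part (i) has essentially already been established in the discussion preceding the lemma: the existence and monotonicity of $\betaeq$ come from the analysis of $f_\alpha$ via \eqref{fabmp} and the sign of $f_\alpha'$, together with the implicit function theorem and the sign of $\partial_\alpha f_\alpha$. The only genuinely new point in (i) is the asymptotic \eqref{eqbetaeqsmallalpha}, which I would obtain by expanding \eqref{eqdeffalhabeta2} for small $\alpha$ and small $\beta$: writing $\cos\alpha-\cos\beta\approx\tfrac12(\beta^2-\alpha^2)$, $1-\cos\alpha\cos\beta+\sin\alpha\sin\beta=1-\cos(\alpha+\beta)\approx\tfrac12(\alpha+\beta)^2$ — wait, more carefully $1-\cos\alpha\cos\beta+\sin\alpha\sin\beta = 1-\cos(\alpha+\beta)$, hmm that is not right either since $\cos\alpha\cos\beta-\sin\alpha\sin\beta=\cos(\alpha+\beta)$, so the numerator is $1-\cos(\alpha+\beta)\approx\tfrac12(\alpha+\beta)^2$, while actually one should double-check against \eqref{fabmp} which gives $f_\alpha(\beta)=\frac{\sin\frac{\alpha+\beta}{2}}{\sin\frac{\beta-\alpha}{2}}\cos\beta\approx\frac{(\alpha+\beta)/2}{(\beta-\alpha)/2}\cdot 1=\frac{\alpha+\beta}{\beta-\alpha}$; setting this equal to $1$ gives $\alpha+\beta=\beta-\alpha$, i.e. $\alpha=0$, so the leading balance is degenerate and one must keep the next order in $\cos\beta\approx 1-\tfrac12\beta^2$. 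Then $f_\alpha(\beta)\approx\frac{\alpha+\beta}{\beta-\alpha}(1-\tfrac12\beta^2)=1$ forces $\frac{2\alpha}{\beta-\alpha}\approx\tfrac12\beta^2$, i.e. $\beta^3\approx 4\alpha$ to leading order (using $\beta-\alpha\approx\beta$), giving $\betaeq(\alpha)=4^{1/3}\alpha^{1/3}+o(\alpha^{1/3})$ as claimed.

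For part (ii) the key observations are that both branches of $\ucaff$ are affine on their respective (closed) regions, so continuity only needs to be checked across the interface $\{|x_1|=\zeta(h-x_2)\}$ and across $\{x_1=0\}$. Across $x_1=0$ the inner branch gives $\ucaff(0,x_2)=(0,dx_2)$ on both sides, so it is continuous (and the map is manifestly symmetric in the sense of \eqref{eqsymm}). Across the interface, by the symmetry it suffices to work with $x_1>0$; there the matching amounts exactly to the identity $(R_{-\alpha}-F^*)(\zeta,-1)^T=0$ together with the constant terms matching, and both of these are precisely what \eqref{eqpcadefgamma} and \eqref{eqpcadefd} were set up to enforce — so I would simply substitute a point on the interface, e.g. $(x_1,x_2)=(\zeta(h-x_2),x_2)$, into both formulas and check equality, citing those two equations. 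The boundary condition \eqref{eqboundarysinglefold2d} holds because for $|x_1|\in(\tfrac{L}{2},L)$ (which lies in the outer region once $\zeta h\le L/2$, equivalently $\zeta\le L/(2h)$) the gradient of the outer branch is exactly $R_{\mp\alpha}$; and $|\partial_1\ucaff|=1$ a.e. follows since $\partial_1\ucaff=(\cos\beta,\sin\beta)^T\sgn x_1$ in the inner region and $\partial_1\ucaff=(\cos\alpha,-\sin\alpha)^T\sgn x_1$ in the outer region, both unit vectors.

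The main obstacle is injectivity. The inner region is a triangle-like domain and the outer region two quadrilateral wings; each branch is affine with positive determinant ($\det F^*=d\cos\beta\ge 1>0$ on the inner piece, $\det R_{\mp\alpha}=1$ on the outer pieces), so each branch is injective on its own piece, and the two outer wings map to disjoint half-planes by symmetry. What remains is to show the images of the inner and outer pieces overlap only along the shared interface. I would argue this geometrically: on $x_1\ge 0$, the inner branch maps the segment $\{x_1=\zeta(h-x_2),\,0\le x_2\le h\}$ onto a line segment, and the constraint $d\cos\beta\ge 1$ (i.e. $\beta\le\betaeq(\alpha)$) is exactly what guarantees that the inner block does not ``fold over'' — one checks that the inner image lies on one side of the image interface line while the outer wing lies on the other, using that the outer map is a rotation so it preserves the orientation of the normal. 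Concretely, one computes the image of the normal direction $(\zeta,-1)^T$ under $F^*$ and under $R_{-\alpha}$ (they agree along the interface) and verifies that the inner region, which lies on the $+$ side of the interface in the reference, stays on the corresponding side in the image; the inequality $a_2=d\cos\beta\ge 1$ prevents the vertical compression that would otherwise let the inner image cross over. The authors flag precisely this: ``this condition is needed to ensure injectivity''. I would therefore spend most of the proof on a careful version of this separating-line argument, while the rest is routine verification against the defining equations \eqref{eqpcadefgamma}--\eqref{eqpcadefd}.
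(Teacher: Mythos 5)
Your proposal is sound and, for the most part, runs along the same lines as the paper: part (i) is inherited from the preceding analysis of $f_\alpha$ plus a small-angle expansion, and part (ii) is a direct verification of continuity, the boundary data and $|\partial_1\ucaff|=1$. Two points of comparison. For the asymptotics you expand the factorized form \eqref{fabmp} rather than the defining equation \eqref{eqdeffalhabeta2} as the paper does; the computations are equivalent, but in either version one must first record that $\betaeq(\alpha)\to 0$ as $\alpha\to 0$ before Taylor-expanding in $\beta$ (easy: $\betaeq$ is increasing, and if $\betaeq(\alpha)\to\beta_0>0$ then $1=f_\alpha(\betaeq(\alpha))\to\cos\beta_0<1$, a contradiction); the paper states this step explicitly, your sketch assumes it tacitly. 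For injectivity the paper offers only ``apparent from the construction'', so your separating-line argument is a genuine addition and it does go through: the inner and outer affine maps agree on the interface line and are both orientation-preserving, hence send the two sides of that line to opposite sides of its image. Note, however, that this only uses $\det F^*=d\cos\beta>0$ (which holds for every $\beta\in(\alpha,\frac\pi2)$); the stronger inequality $d\cos\beta\ge 1$ is not what prevents fold-over of this single continuous map --- it is needed later, in the multilayer construction of Step 4, to keep consecutive layer curves at distance $h_j$. Also, ``the two outer wings map to disjoint half-planes by symmetry'' requires the small verification that $\{x_1\ge0\}$ is mapped into $\{y_1\ge 0\}$, which follows from $\zeta\cos\alpha-\sin\alpha=\zeta\cos\beta\ge 0$ on the outer wing. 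Finally, your formulas for $\partial_1\ucaff$ carry a spurious $\sgn x_1$ on the first component; the correct expressions are $(\cos\beta,\sgn (x_1)\sin\beta)$ and $(\cos\alpha,-\sgn( x_1)\sin\alpha)$, which are still unit vectors, so the conclusion is unaffected.
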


	\begin{proof}
	(i): The function $\betaeq$ is defined by $f_\alpha(\betaeq(\alpha))=1$, with $f$ as in \eqref{eqdeffalhabeta}. 
	In order to verify \eqref{eqbetaeqsmallalpha}, we write 
	\eqref{eqdeffalhabeta2} as
\begin{equation}
    (1-\cos\alpha\cos\beta+\sin\alpha\sin\beta)\cos\beta={\cos\alpha-\cos\beta},
\end{equation}
	expand both sides to second order in $\alpha$ and rearrange terms, to obtain
	\begin{equation}
	    \alpha \sin\beta\cos\beta 
	    +\frac12\alpha^2(\cos^2\beta+1)
	    = (1-\cos\beta)^2 + O(\alpha^3).
	\end{equation}
	In particular, $\lim_{\alpha\to0}\betaeq(\alpha)=0$.
	Expanding to leading order in $\beta$ 
	leads to $\alpha \beta +\alpha^2= (\beta^2/2)^2 + O(\alpha^3)+O(\alpha\beta^3)+O(\alpha^2\beta^2)+O(\beta^6)$, which implies 
	 \eqref{eqbetaeqsmallalpha}.
	
	(ii): We define $\zeta$ and $d$ by \eqref{eqpcadefgamma} and \eqref{eqpcadefd}, respectively. 
	One verifies that $\ucaff$ as defined in \eqref{eqdefucaff} obeys the symmetry condition \eqref{eqsymm}, $\ucaff_1(0,x_2)=0$, and $D\ucaff=F^*$ for $0<x_1<\zeta(h-x_2)$,
	$D\ucaff=R_{-\alpha}$ for $x_1\ge\zeta(h-x_2)$, and that the function is continuous on the point $(0,h)$. As is apparent from the construction above (see in particular \eqref{eqFRbetaa1a2}) 
this map is injective and transforms horizontal lines in a length-preserving way, in the sense that $|\partial_1 \ucaff|=1$.
 The rest follows from the computations above.
	\end{proof}
	
	\subsection{Second construction: Multilayered folding with partial delamination. }

\begin{figure}[t]
    \centering
    \includegraphics[width=0.8\linewidth]{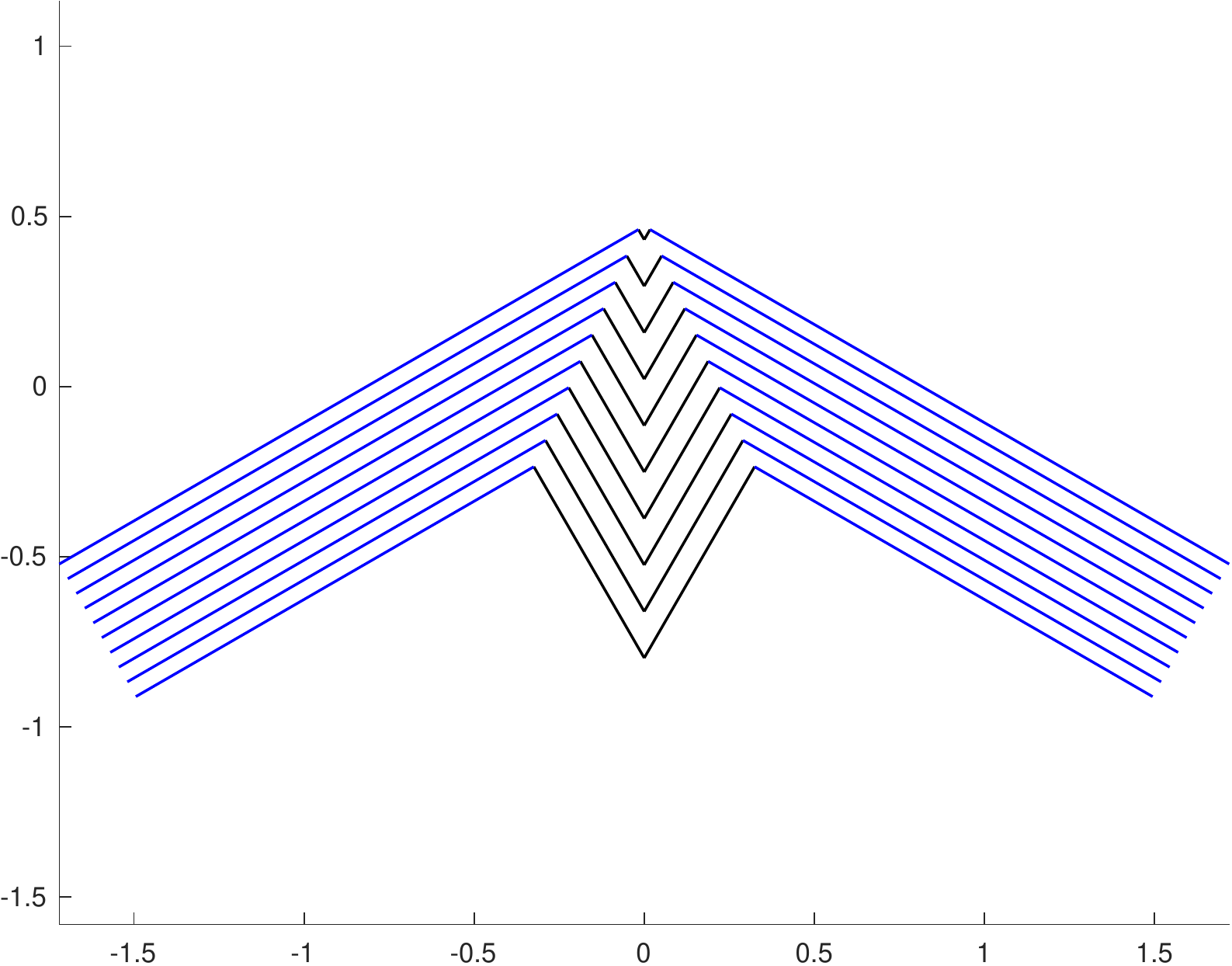}
    \caption{Piecewise affine construction. The angle formed by the segments with the $x$ axis is $\alpha$ on the left, $-\beta$ in the first (downward) part of the central fold, then $\beta$, and finally $-\alpha$ on the right.}
    \label{fig:pw_aff}
\end{figure}

	The construction arises as a discretization and regularization of the continuous piecewise affine construction of Section \ref{secpwaffine}. The final result of the construction, for two choices of $\beta$, is illustrated in Figure \ref{fig:2d_construction}.
	
	We assume that in the inner region the material is partially delaminated, and use this to replace the non-isometric gradient $F^*$ by a deformation which is isometric away from the discontinuity set. We then replace the sharp corners by regularized corners, in which each layer smoothly bends from angle $\alpha$ the angle $-\beta$, then to $\beta$, and finally to $-\alpha$.

	We consider a construction with $n\le N$  layers of paperboard which have been separated across $n-1$ delamination surfaces. We assume $n\ge 1$; the case $n=1$ without delamination has already been treated in Section~\ref{sec:no-delam}.
	To this end, we fix a subset  $\{\delamy_1 < \delamy_2< \dots <\delamy_{n-1}\}\subseteq (0,h)\cap \frac hN\Z$ and assume that delamination occurs only on the surfaces $\{x_2=\delamy_1, \dots \delamy_{n-1}\}$, in the sense that $J_u\subseteq (-L,L)\times \{\delamy_1, \dots, \delamy_{n-1}\}$. For notational convenience we denote $\delamy_0:=0$ and $\delamy_{n}:=h$.
	We label by $h_j:=\delamy_{j+1}-\delamy_j$, $0\le j< n$, the thickness of the $j$-th layer.
	The map we construct will be in $C^1( [-L,L]\times [\delamy_j, \delamy_j+h_j))$ for each $j< n$.

    \paragraph{Step 1.} The first step is a piecewise affine construction. 
    We first construct the map on the set $\{x_2=\delamy_j\}$ using the continuous piecewise affine construction as background. Recalling \eqref{eqdefucaff} we define maps $\hat f_0,\dots, \hat f_{n-1}:[-L,L]\to \R^2$ by
    \begin{equation}\label{eqdeffj}
        \hat f_j(x_1):=\ucaff(x_1,\delamy_j)=
        \begin{cases}
    \begin{pmatrix} x_1\cos\beta \\ |x_1|\sin\beta +db_j \end{pmatrix},  & \text{ if } |x_1|<l_j,\\
\begin{pmatrix} x_1\cos\alpha + (\delamy_j-h)\sin\alpha \sgn x_1\\ 
-|x_1|\sin\alpha + (\delamy_j-h)\cos\alpha +dh  \end{pmatrix}, & \text{ if }  l_j\le |x_1|\le L,
    \end{cases}
    \end{equation}
    where $l_j:=\zeta(h-\delamy_j)$
    and $\zeta$ is defined in \eqref{eqpcadefgamma}.
    These maps are illustrated in Figure  \ref{fig:pw_aff}. The inner part of the construction, for $|x_1|\le l_j$, will be referred to as the ``down slope''.
    
    \paragraph{Step 2.} In a next step we round the corners. This could be done by mollification, but it is important to (i) check the length, and keep the deformation isometric in the longitudinal direction, and (ii) be able to verify global injectivity of the two-dimensional deformation. Therefore we take a more explicit path and insert a circular arc at each of the points of discontinuity of $\hat{f}'_j$, namely, at $x_1=\pm l_j$ and at $x_1=0$.
    
\begin{figure}[t]
\begin{center}    
\includegraphics[width=0.8\linewidth]{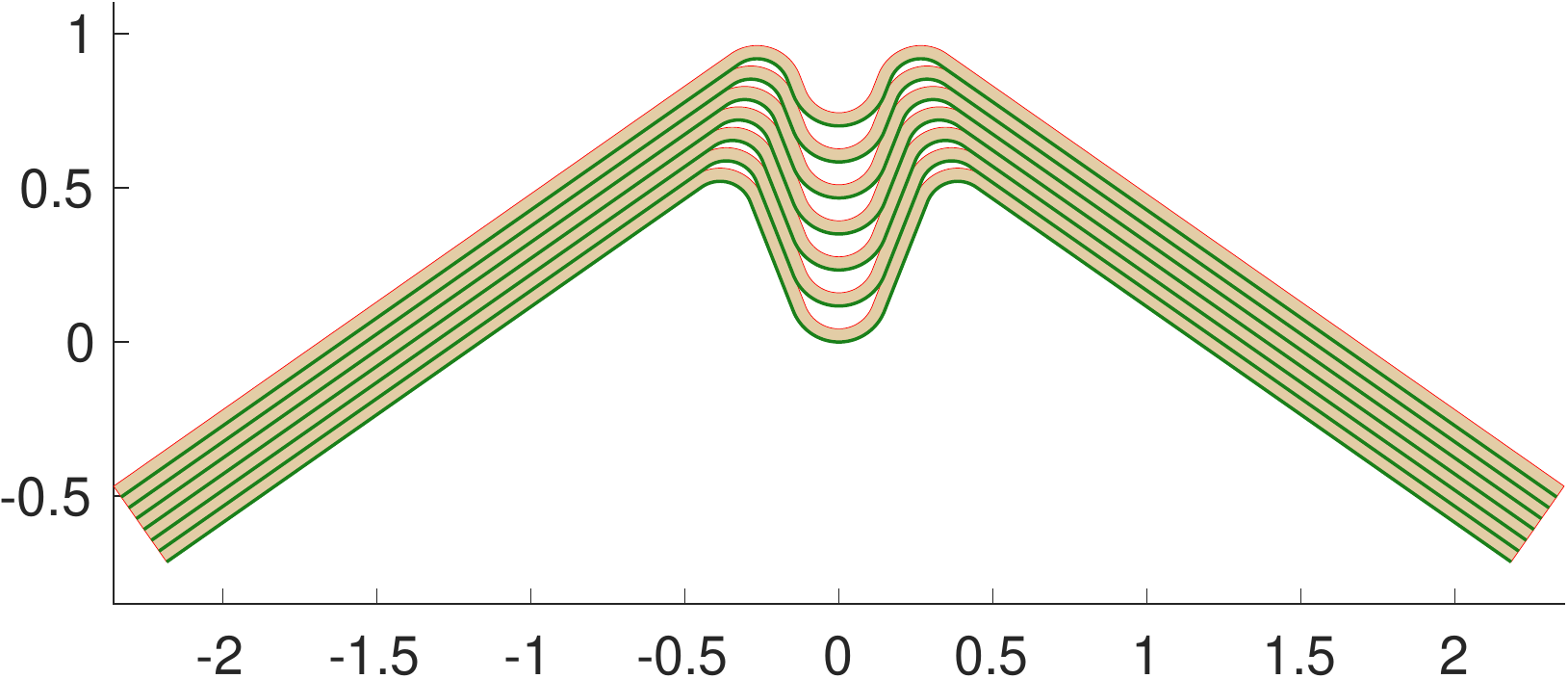}\\
     \includegraphics[width=0.8\linewidth]{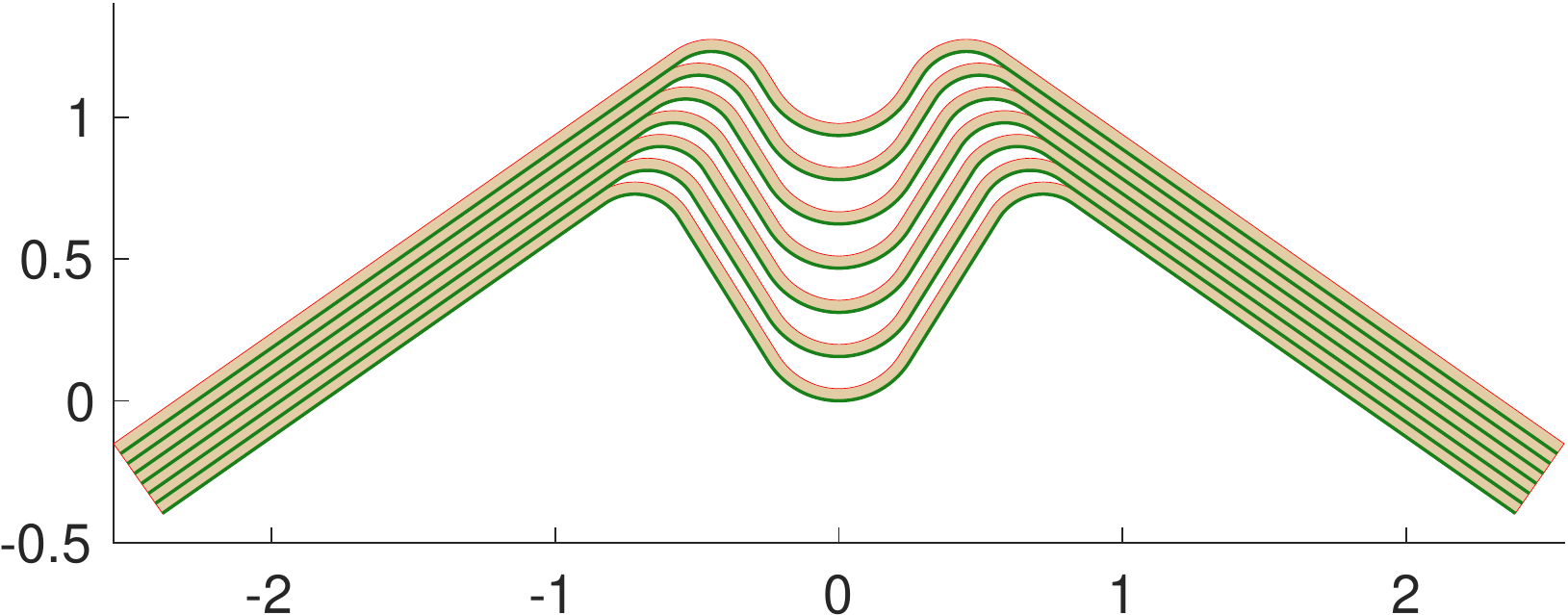}
     \end{center}
    \caption{Construction of the multilayered folding construction. Top: $\beta=\betaeq$, bottom: $\beta<\betaeq$.
    This is a smoother version of the backbone structure from Figure~\ref{fig:pw_aff}. Also in this case the angle formed by the straight segments with the $x$ axis is $\alpha$, $-\beta$, $\beta$ and $\alpha$ (from left to right).}
    \label{fig:2d_construction}
\end{figure}
    
    The length for each of the four arcs is chosen equal as $\larc$. The centers as well as radii are chosen such that the tangents of the arc match the respective left and right tangents of $\hat{f}_j$, as illustrated in  Figure~\ref{fig:2d_construction}. In particular, we can see that the radii are given by $\frac{\larc}{\alpha+\beta}$ and $\frac{\larc}{\beta}$, respectively -- independently of the layer index $j$.
    The requirement that the deformation is a rigid body motion for $|x_1| >L/2$ results in the constraint that $2\larc+\max\{l_j\}\le L/2$, as neither the down-slope part nor the arcs can fulfill this condition.
    
    For each $j$, we first define the orientation function $\varphi_j\in W^{1,\infty}(\R)$ by
    \begin{equation}
        \varphi_j(x_1):=
        \begin{cases}
            \frac{x_1}{\larc}\beta, &\text{ if } |x_1|\le \larc,\\
            \beta\sgn{x_1}, &\text{ if } \larc<|x_1|<l_j+\larc,\\
            \beta\sgn{x_1}-\frac{|x_1|-l_j-\larc}{\larc} (\alpha+\beta)\sgn{x_1}, & \text{ if } l_j+\larc\le |x_1|\le l_j+2\larc,\\
            -\alpha\sgn{x_1} ,& \text{ if } |x_1|>  l_j+2\larc,
        \end{cases}
    \end{equation}
    and then define  the
    arc-length preserving map of the $j$-th layer's mid-plane
     $f_j \colon [-L,L] \to \R^2$ by
    \begin{equation}\label{eqdeffjx1}
        f_j(x_1):=\hat f_j(0)+\int_0^{x_1} R_{\varphi_j(x_1')}e_1 dx_1'.
    \end{equation}
    We observe that $f_j\in W^{2,\infty}([-L,L];\R^2)$, with $|f_j'|=1$ and $|f_j''|\le (\alpha+\beta)/\larc$ almost everywhere.
    By comparison with the derivative of $\hat f_j$ as defined in \eqref{eqdeffj} we see that
  $f_j(0)=\hat f_j(0)$, $f_j'(x_1+\larc)=\hat f_j'(x_1)=R_\beta e_1$ for $x_1\in (0, l_j)$ and
    $f_j'(x_1+2\larc)=\hat f_j'(x_1)=R_{-\alpha}e_1$ for $x_1>l_j$.
    A short computation shows that if $x_1\ge \max\{l_j+2\larc,l_{j+1}+2\larc\}$ then
    \begin{equation}\label{eqfjp1fjbc}
    \begin{split}
    f_{j+1}(x_1)-f_j(x_1)
    &=\hat f_{j+1}(0)-\hat f_j(0) + (l_{j+1}-l_j) (R_\beta e_1-R_{-\alpha} e_1)\\
&    = \hat f_{j+1}(x_1)-\hat f_j(x_1)
    =R_{-\alpha} e_2 h_j.
    \end{split}
    \end{equation}
    Further, we observe that
    \begin{equation}
        l_j\le l_0=h \zeta =h\frac{\sin\alpha}{\cos\alpha-\cos\beta}.
    \end{equation}

    \paragraph{Step 3.} 
    We are now ready to define the required mapping $u \colon \omega_h \to \R^2$. We use  the same construction used in 
    \eqref{eqdefufromvplate} in the proof of Lemma \ref{lemmacontinuousbending} with $f_j$ in place of $v$ and set
    \begin{equation} \label{eq:deformation_delam}
    u(x_1,x_2):=f_j(x_1) + (x_2-\delamy_j) (f_j')^\perp(x_1), \quad \text{for $x_2\in [\delamy_j, \delamy_j+h_j)$}.
\end{equation}
This map clearly belongs to $SBV^2_N(\omega_h;\R^2).$

Assume now that  $l_0+2\larc\le \frac12L$. Then for $x_1\ge \frac L2$ we have $f_j'=R_{-\alpha}e_1$, and \eqref{eqfjp1fjbc} holds. Therefore 
$u$ is continuous for $x_1\ge \frac L2$, with
$D u = R_{-\alpha}$. The same holds (flipping some signs) on the other side, and therefore $u$ fulfills the boundary condition \eqref{eqboundarysinglefold2d}.

The energy is estimated by the same argument as in section~\ref{sec:no-delam}, see in particular    \eqref{eqestenergyplate}. 
    In particular, the $n$ individual arcs in the construction have a change in angle of magnitude no more than $2\beta$, an arc-length no less than $\larc$, and a thickness $h_j$. In the rest of the domain the function $f_j$ is affine. Therefore    
    	\begin{equation}\label{eqestenergyplate2}. 
	\begin{split}
\energy_h[u]\le& 
\sum_{j=1}^{n-1}2 \gamma (l_j+2\larc)+
\sum_{j=0}^{n-1}
\int_{-L}^L \int_{b_j}^{b_{j+1}} c | (x_2-b_j) f_j''(x_1) |^2 dx_2 dx_1	\\
\le& 4  n \gamma (h\zeta+\larc)+ c\sum_{j=0}^{n-1} \frac{\beta^2 h_j^3}{\larc}.
	      \end{split}
	\end{equation}

\paragraph{Step 4.}     
We finally show that the map  $u$ defined in \eqref{eq:deformation_delam} is injective.
This leads to an additional constraint. 

We first consider
injectivity inside a single layer. For the affine parts and the arcs around $\pm(l_j+\larc)$ this follows by the same easy argument as in Lemma~\ref{lemmacontinuousbending}. 
For the central arc, with a different concavity, injectivity 
of the expression in \eqref{eq:deformation_delam}
is equivalent to the fact that the layer thickness $h_j$ is not larger than the radius of the arc of circle described by $f_j$, which is $\larc/\beta$. Therefore injectivity is equivalent to
\begin{equation}\label{eqinjectbeta}
 h_j\le \frac{\larc}{\beta} \hskip5mm\text{ for }j=0, \dots, n-1.
\end{equation}

We next focus on the interaction between different layers.
The boundary data automatically imply injectivity for $|x_1|\ge \frac12 L$; we can easily extend the construction to $\R\times [0,h)$ and we see that it suffices to show that  the set
$u(\R\times [b_j, b_j+h_j))$ does not intersect the curve
$(x_1, f_{j+1}(x_1))=u(x_1, b_{j+1})$. 
To ensure global injectivity we therefore need to show
\begin{equation}
    f_j(x_1) + \lambda (f_j')^\perp(x_1) \ne f_{j+1}(x_1')  \text{ for all $j$ and } x_1, x_1'\in\R, \lambda\in[0,h_j).
\end{equation}
Hence, it suffices to prove that
\begin{equation}
    |f_j(s) -f_{j+1}(t)|\ge h_j  \text{ for all $j$ and } s,t.
\end{equation}

\begin{figure}
\begin{adjustwidth}{-2cm}{-2cm}
\centering{
\includegraphics[width=1.0\linewidth]{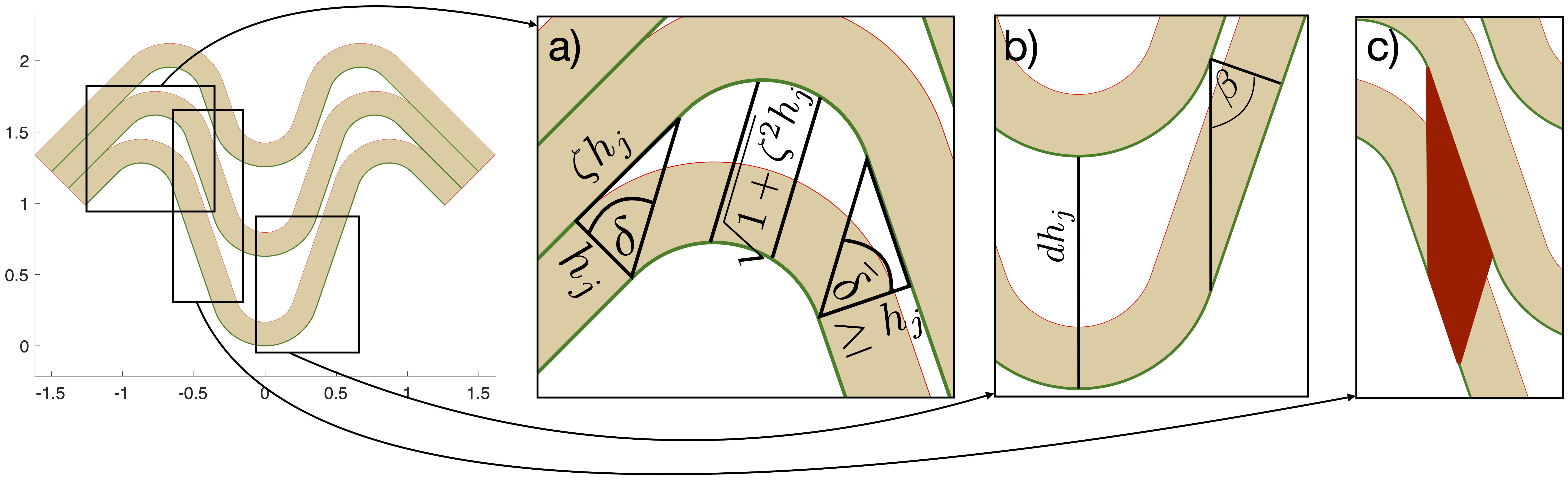}
}
\end{adjustwidth}
\caption{Illustration of injectivity estimates. On the left, the location of subfigures a),b), and c) in the construction is shown. a) relevant trigonometric quantities for the outer part's construction, b) for the inner part, c) in area marked in red, both constructions are valid.} \label{fig:injective}
\end{figure}

We first consider the inner part of the construction. For $|s|,|t|\le \larc+l_{j+1}$, we have $f_{j+1}(t)=f_j(t)+dh_je_2$.
We recall that $|\varphi_j|\le \beta$ everywhere, so that \eqref{eqdeffjx1} implies
\begin{equation}
    |e_2\cdot(f_j(s)-f_j(t))|\le |e_1\cdot(f_j(s)-f_j(t))|\tan\beta.
\end{equation}
Therefore, writing for brevity $A:=f_j(s)-f_j(t)\in\R^2$,
\begin{equation}
\begin{split}
    |f_{j+1}(t)-f_j(s)|^2=& A_1^2+(A_2+dh_j)^2\ge \frac{\cos^2\beta}{\sin^2\beta} A_2^2+ (A_2^2+2A_2dh_j+d^2h_j^2)\\
    =&\frac{1}{\sin^2\beta} A_2^2+2A_2dh_j+d^2h_j^2\\
    \ge& d^2h_j^2-d^2h_j^2{\sin^2\beta}=d^2h_j^2\cos^2\beta\ge h_j^2
    \end{split}
\end{equation}
where in the last step we used the condition $d\cos\beta\ge 1$ which follows from $\beta\le \betaeq(\alpha)$. For an illustration see Figure \ref{fig:injective}b).

For the outer part of the construction now let $|s|,|t| \ge \larc$. We argue similarly as for the inner part and for notational simplicity only consider $t,s<0$, the other side being a symmetric analog. Fix $\delta := \tan^{-1}(\zeta)$ and orthogonal unit vectors $\bar{e}_1:=R_{\alpha-\delta}e_1$, $\bar{e}_2:=R_{\alpha-\delta}e_2 = \begin{pmatrix}
\sin(\delta-\alpha)\\
\cos(\delta-\alpha)
\end{pmatrix}.$ 
From \eqref{eqfjp1fjbc} and $l_j=l_{j+1}+\zeta h_j$ we have, 
for $t+\zeta h_j\le -L/2$,
$f_{j+1}(t+\zeta h_j)-f_j(t)=h_j R_{\alpha}(e_2+\zeta e_1)
=h_j\sqrt{1+\zeta^2} \bar e_2$.
As $f'_{j+1}(t+\zeta h_j)=f'_j(t)$ for $t+\zeta h_j<-\larc$, we conclude that in this range
$f_{j+1}(t+\zeta h_j) = f_{j}(t) + h_j \sqrt{1+\zeta^2} \bar{e}_2$.

We next show that
\begin{equation}\label{eqvarphide}
 |\varphi_j(x_1)-\alpha+\delta|\le \delta \text{ for all } x_1\in [-L,0].
\end{equation}
To see this, note that the angle between the up-slope and $\bar{e}_1$ is exactly equal to $\delta$, where $\cos\delta = \frac{h_j}{\sqrt{1+\zeta^2}h_j}$. The angle $\bar{\delta}$ between the down-slope and $\bar{e}_1$ satisfies 
$\cos \bar{\delta} = \frac{(d\cos\beta) h_j }{\sqrt{1+\zeta^2}h_j} \ge  \cos\delta$, since $\beta\le \betaeq$ and therefore $d\cos\beta\ge 1$. For an illustration, see Figure \ref{fig:injective}a). 
Alternatively, \eqref{eqvarphide} can be proven  algebraically. First, $\varphi_j\in[-\beta,\alpha]$ for $x_1\le0$, so that it suffices to prove that $p:=\frac{\alpha+\beta}2\le\delta$.
We set $m:=\frac{\beta-\alpha}{2}$ and express $\alpha$ and $\beta$ in terms of $m$ and $p$.
As $\tan\delta=\zeta$, by monotonicity of $\tan$ we need to check
\begin{equation}
 \tan p \le \zeta=\frac{\sin\alpha}{\cos\alpha-\cos\beta}=\frac12\left(\frac{1}{\tan m} - 
 \frac{1}{\tan p}\right).
\end{equation}
On the other hand, by \eqref{fabmp} the assumption $\beta\le \betaeq$ (in the form $f_\alpha(\beta)\ge 1$) is the same as
\begin{equation}
 \frac{1}{\tan m} \ge \frac{1+\sin^2 p}{\sin p\cos p} = 2\tan p + \frac{1}{\tan p}.
\end{equation}
As these two conditions are easily seen to be equivalent,  \eqref{eqvarphide}  holds.

Thus we have $|\bar{e}_2\cdot(f_j(s)-f_j(t))|\le |\bar{e}_1\cdot (f_j(s)-f_j(t))|\tan\delta$ for $s,t\le0$. Setting $\bar{A} := \begin{pmatrix}
\bar{e}_1\cdot(f_j(s)-f_j(t))\\
\bar{e}_2\cdot(f_j(s)-f_j(t))
\end{pmatrix}$, we calculate
\begin{equation}\begin{split}
|f_{j+1}(t+h_j\zeta)-f_j(s)|^2 &= \bar{A}_1^2 + (\bar{A}_2+\sqrt{1+\zeta^2}h_j)^2 \\
&\ge \left(\frac{1}{\tan^2\delta} +1\right) \bar{A}_2^2 + 2\bar{A}_2\sqrt{1+\zeta^2}h_j + (1+\zeta^2)h_j^2 \\
&= \frac{1+\zeta^2}{\zeta^2}\bar{A}_2^2 + 2\bar{A}_2\sqrt{1+\zeta^2}h_j + (1+\zeta^2)h_j^2 \\
& \ge h_j^2.
\end{split}\end{equation}

For the mixed case where $s$, $t$ are in different parts of the we note that the minimal distance between two consecutive layers is achieved (again, on the left side of the folding construction) between diagonal corners of the red quadrilateral in Figure \ref{fig:injective}c), as, within this area, both the inner and the outer estimate are valid. It is clear that the respective distances are bounded by $\min\{dh, \sqrt{1+\zeta^2}h\}$.

\begin{lemma}\label{lemmacostrdelaminated}
Fix $\alpha\in (0,\frac\pi4]$, $h>0$, $L \ge h$, $N\in\N$, $N\ge 1$. 
For any $n\in\N$ with $1\le n\le N$, any $\beta\in (\alpha, \betaeq(\alpha)]$, and any $\larc>0$, if
\begin{equation}\label{eqassconstrde}
    \frac{2\beta h}{n} \le \larc\le \frac 18 L \hskip5mm
    \text{ and }\hskip5mm
    \ell:=\frac{\sin\alpha}{\cos\alpha-\cos\beta}h \le\frac{1}{4}L
\end{equation}
then there exists a map $u\in SBV^2_N(\omega_h;\R^2)$, which obeys \eqref{eqboundarysinglefold2d}, is injective,  and such that
\begin{equation}
\energy_h[u]\le c\left(\gamma (\ell+\larc)n + \frac{\beta^2 h^3}{ \larc n^2}\right) = c\left(\gamma n\left(\frac{ h\sin \alpha}{\cos\alpha - \cos\beta}+\larc\right) + \frac{\beta^2 h^3}{\larc n^2 }\right)
\end{equation}
with a constant $c>0$ only depending on the elastic energy density $\Wdd$.
\end{lemma}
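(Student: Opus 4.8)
The statement is essentially an assembly of the constructions and estimates already carried out in Steps 1--4 above, so the plan is to verify that the hypotheses \eqref{eqassconstrde} are exactly what is needed to legitimize each of those steps and then to collect the energy bound. First I would choose $n$ delamination surfaces: since $1\le n\le N$ and the admissible planes form the set $(0,h)\cap\frac hN\Z$, I pick $b_1<\dots<b_{n-1}$ equally spaced in $\frac hN\Z$, so that each layer thickness $h_j$ satisfies $h_j\le \lceil h/n\rceil$-type bound; more precisely one arranges $h_j\le 2h/n$ (or simply $h_j\le h$ suffices for some estimates, but the cubic sum needs the finer bound). With this choice $\sum_j h_j^3 \le n\,(2h/n)^3 = 8h^3/n^2$, which is what produces the $\beta^2h^3/(\larc n^2)$ term after dividing by $\larc$ in \eqref{eqestenergyplate2}.

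Next I would set $\zeta:=\frac{\sin\alpha}{\cos\alpha-\cos\beta}$ and $\ell=\zeta h$, note $l_j=\zeta(h-b_j)\le \zeta h=\ell$, and define $f_j$, $\varphi_j$ and finally $u$ by \eqref{eqdeffjx1}, \eqref{eq:deformation_delam} exactly as in Steps 1--3. The boundary condition requires $l_0+2\larc\le \frac12 L$; since $l_0=\ell\le\frac14 L$ and $\larc\le\frac18 L$ by \eqref{eqassconstrde}, this holds, so $u$ satisfies \eqref{eqboundarysinglefold2d} and $u\in SBV^2_N(\omega_h;\R^2)$. For injectivity I invoke Step 4: the intra-layer condition \eqref{eqinjectbeta}, $h_j\le \larc/\beta$, follows from $h_j\le 2h/n$ together with the hypothesis $2\beta h/n\le \larc$; the inter-layer inner and outer estimates of Step 4 hold because $\beta\le\betaeq(\alpha)$ (hence $d\cos\beta\ge1$), and the mixed case is covered by the quadrilateral argument around Figure~\ref{fig:injective}c). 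Thus $u$ is globally injective.

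Finally I would read off the energy from \eqref{eqestenergyplate2}: the delamination term is $\sum_{j=1}^{n-1}2\gamma(l_j+2\larc)\le 4n\gamma(\ell+\larc)$, and the elastic term is $c\sum_{j=0}^{n-1}\beta^2 h_j^3/\larc \le c\,\beta^2\,(8h^3/n^2)/\larc$. Combining gives $\energy_h[u]\le c(\gamma n(\ell+\larc)+\beta^2 h^3/(\larc n^2))$, and substituting $\ell=h\sin\alpha/(\cos\alpha-\cos\beta)$ yields the second displayed form. The constraint $\alpha\le\frac\pi4$ and $L\ge h$ are used only to keep various trigonometric quantities bounded (e.g. $\tan\beta$, $\cos\beta$, $d$ are all $O(1)$ on $(\alpha,\betaeq(\alpha)]$ for $\alpha\le\pi/4$) so that the absolute constant $c$ depends only on $\Wdd$; I would spell this out briefly.

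\textbf{Main obstacle.} There is no genuine difficulty — all the hard work (the explicit arc construction, the four separate injectivity estimates, the length-preserving property $|f_j'|=1$) is done in Steps 1--4. The only thing requiring care is bookkeeping: choosing the $b_j$ so that $h_j\le 2h/n$ despite the rigid constraint $b_j\in\frac hN\Z$ (one should check this is possible whenever $n\le N$, e.g. by taking $b_j$ to be the nearest multiple of $h/N$ to $jh/n$, which gives $h_j\le h/n+h/N\le 2h/n$), and tracking that the hypothesis $2\beta h/n\le\larc$ is precisely the intra-layer injectivity condition \eqref{eqinjectbeta} under this choice. I would also double-check that $l_0=\ell$ and that $2\larc+l_0\le L/2$ follows from $\larc\le L/8$ and $\ell\le L/4$, so the boundary condition is genuinely attained.
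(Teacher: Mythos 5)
Your proposal is correct and follows essentially the same route as the paper's proof: choose the delamination surfaces $b_j\in\frac hN\Z$ so that $h_j\le 2h/n$, invoke the Steps 1--4 construction, check that $\ell\le L/4$ and $\larc\le L/8$ give $2\larc+l_0\le L/2$ (boundary condition) and that $2\beta h/n\le\larc$ with $h_j\le 2h/n$ gives the central-arc injectivity condition \eqref{eqinjectbeta}, then read the bound off \eqref{eqestenergyplate2}. The only cosmetic difference is that the paper dispatches $n=1$ separately via Lemma~\ref{lemmacontinuousbending} while you keep it inside the general construction, which also works.
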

\begin{proof}
For $n=1$ there is no delamination, the assertion follows from Lemma~\ref{lemmacontinuousbending}, $\alpha\le\beta$, and $\larc\le L$. Therefore we can assume $n\ge 2$ in the following.

We first choose a subset $\{b_1, \dots, b_{n-1}\}\subseteq \{h\frac 1N, \dots, h\frac{N-1}N\}$ such that, setting $b_0=0$ and $b_{n}=h$, we have $h_j=b_{j+1}-b_j\le \frac{2h}{n}$ for all $j=0,\dots,n-1$. We immediately notice that the map $u$ given in \eqref{eq:deformation_delam} is a member of $SBV^2_N(\omega_h;\R^2)$, with its jump set contained in the set $(-2\larc - \ell,2\larc + \ell)\times\{b_1, \dots, b_{n-1}\}$. The energy estimate follows 
then from \eqref{eqestenergyplate2}.

The conditions in \eqref{eqassconstrde} imply in particular that $2\larc+\ell\le L/2$, and therefore that the boundary condition \eqref{eqboundarysinglefold2d} is fulfilled. The condition \eqref{eqinjectbeta}, required for injectivity around the central arc, follows from 
the first inequality in \eqref{eqassconstrde} and the condition $h_j\le 2h/n$.
\end{proof}

\subsection{Scaling upper bounds.}
\begin{theorem} \label{thm:scaling_upper}
There are $C>0$ and $\eta\in(0,1]$ such that for $0<h\le \eta L$, $\alpha\in (0,\pi/2]$
	there is $u\in SBV^2_N(\Omega;\R^2)$ which obeys \eqref{eqboundarysinglefold2d}, $u\in W^{1,\infty}(\Omega\setminus\overline{J_u};\R^2)$
	with $\calH^1(\overline{J_u}\setminus {J_u})=0$,	
is injective, 
	and obeys
	\begin{equation}
\energy_h[u]\le C \min\{ \frac{\alpha^2h^3}{L}, 
\alpha^{1/3} \gamma^{2/3}h^{4/3} 
       +\frac{\alpha \gamma^{1/2} h^{3/2}}{N^{1/2}}
       + \frac{\alpha^2  h^3}{L N^2 }
       {     + \frac{\alpha  h^4}{L^2 N^2 }}\}.
\end{equation} 
\end{theorem}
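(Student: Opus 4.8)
The plan is to take a minimum of three explicit constructions, one for each term in the bracket (the fourth term will come as a byproduct of the same construction as the third), and observe that the minimum of their energies is bounded by $C$ times the stated minimum. The overall structure is: prove the bound holds with $\energy_h[u]$ replaced by the first term using Lemma~\ref{lemmacontinuousbending}; then prove that it holds with the second-plus-third-plus-fourth terms (the delaminated cluster) using Lemma~\ref{lemmacostrdelaminated}, by choosing the parameters $n$, $\beta$, and $\larc$ appropriately depending on the size of $\alpha$; and finally note that for a given configuration of parameters we simply pick whichever construction has smaller energy, so the energy is bounded by the minimum.

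More precisely, I would first dispose of the purely elastic term: Lemma~\ref{lemmacontinuousbending} directly gives a map $u\in W^{1,\infty}\subseteq SBV^2_N$ obeying the boundary conditions with $\energy_h[u]\le c\alpha^2h^3/L$, and this $u$ trivially satisfies $\calH^1(\overline{J_u}\setminus J_u)=0$ since $J_u=\emptyset$. For the delaminated part I would invoke Lemma~\ref{lemmacostrdelaminated} and optimize. There are essentially two regimes governed by whether the optimal number of layers $n$ exceeds $N$. When $\alpha$ is moderate one chooses $\beta=\betaeq(\alpha)$, so that $\cos\alpha-\cos\betaeq(\alpha)\sim \betaeq(\alpha)^2\sim\alpha^{2/3}$ by \eqref{eqbetaeqsmallalpha}, hence $\ell\sim \alpha^{1/3}h$ (up to constants depending on whether $\alpha$ is small or of order one) and $\beta\sim\alpha^{1/3}$; then one optimizes $c(\gamma n(\ell+\larc)+\beta^2h^3/(\larc n^2))$ over $n\le N$ and $\larc\ge 2\beta h/n$ with $\larc\le L/8$. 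Balancing the two terms in $\larc$ gives $\larc\sim \beta h^{3/2}/(\gamma^{1/2}n^{3/2})$; substituting and then balancing in $n$ gives the unconstrained optimum $n\sim (\beta^2h^3/(\gamma^2\ell^2))^{1/5}$ with energy $\sim (\gamma^3\ell^3\beta^4h^6)^{1/5}$; plugging $\ell\sim\alpha^{1/3}h$, $\beta\sim\alpha^{1/3}$ yields $\sim\alpha^{1/3}\gamma^{2/3}h^{4/3}$, the second term. If the constraint $n\le N$ is active, one sets $n=N$; then optimizing over $\larc$ with the constraint $\larc\ge 2\beta h/N$ gives two subcases: either the interior optimum $\larc\sim\beta h^{3/2}/(\gamma^{1/2}N^{3/2})$ is admissible, producing the third term $\alpha\gamma^{1/2}h^{3/2}/N^{1/2}$ (again using $\beta\sim\alpha^{1/3}$ and $\ell\lesssim \alpha^{1/3}h$, though one should double-check the $\gamma N\ell\sim\alpha^{1/3}\gamma^{2/3}h^{4/3}\cdot(\text{something})$ term does not dominate — it is absorbed by the minimum with the second term), or the constraint $\larc\ge 2\beta h/N$ forces $\larc$ to its lower bound, and then the bending term $\beta^2h^3/(\larc N^2)\sim \beta h^2/N$ while the surface term $\gamma N(\ell+\larc)\sim\gamma N\ell$ — this regime, together with the choice $\larc=L/8$ (which gives the $\alpha^2h^3/(LN^2)$ bending term and the $\alpha h^4/(L^2N^2)$ surface term from $\gamma N\larc = \gamma NL/8$... wait, that is $\gamma NL$, not $\alpha h^4/L^2N^2$), needs care: the fourth term $\alpha h^4/(L^2N^2)$ should come from choosing $\beta$ slightly above $\alpha$ rather than $\beta=\betaeq$, making $\ell$ large (of order $L$) and using $\larc\sim L$, $n=N$, so the bending term is $\sim\beta^2h^3/(LN^2)$ and taking $\beta\sim\alpha$ gives... $\alpha^2h^3/(LN^2)$ again; the $\alpha h^4/(L^2N^2)$ term must instead arise from a more delicate balance. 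I would structure this by writing $\beta=\betaeq$ for the first two delaminated terms and a separate near-full-delamination construction ($\ell\sim L$, $\larc\sim L$, $n=N$, $\beta$ small) for the last two, as suggested by the last bullet in the introduction.

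The main obstacle is the bookkeeping of the constraints $2\beta h/n\le\larc\le L/8$ and $\ell\le L/4$ from Lemma~\ref{lemmacostrdelaminated}, and verifying that the unconstrained optima respect them (or, when they do not, that pushing to the boundary of the constraint set still produces one of the listed terms). The key quantitative inputs are the asymptotics $\betaeq(\alpha)\sim 4^{1/3}\alpha^{1/3}$ and $\cos\alpha-\cos\betaeq(\alpha)\sim\alpha^{2/3}$ as $\alpha\to0$ (for $\alpha$ of order one these are just bounded below by positive constants, which only helps), so that $\ell$ and $\beta$ can be replaced by the clean powers $\alpha^{1/3}h$ and $\alpha^{1/3}$ throughout. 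Since the theorem only claims a scaling bound with an unspecified constant $C$ and only requires the \emph{minimum}, I do not need a single construction valid for all parameters — I can case-split on the relative sizes of $h,L,\gamma,N,\alpha$ freely, pick the construction from Lemma~\ref{lemmacontinuousbending} or Lemma~\ref{lemmacostrdelaminated} with the optimal admissible parameters in each case, and note that its energy is $\le C$ times one of the four terms, hence $\le C$ times the full minimum. The regularity and injectivity claims ($u\in W^{1,\infty}(\Omega\setminus\overline{J_u})$, $\calH^1(\overline{J_u}\setminus J_u)=0$, injectivity) are inherited directly from the two lemmas, since in the delaminated construction $\overline{J_u}$ is a finite union of horizontal segments and the map is piecewise smooth.
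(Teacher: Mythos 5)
Your overall strategy (take the minimum of the purely elastic construction from Lemma~\ref{lemmacontinuousbending} and the delaminated construction from Lemma~\ref{lemmacostrdelaminated} with optimized $n$, $\beta$, $\larc$, case-splitting freely on the parameters) is exactly the paper's, and your treatment of the first and second terms is essentially right (though your intermediate formulas $n\sim(\beta^2h^3/\gamma^2\ell^2)^{1/5}$, energy $\sim(\gamma^3\ell^3\beta^4h^6)^{1/5}$ are a slip: balancing $\gamma n\ell+\beta\gamma^{1/2}h^{3/2}n^{-1/2}$ gives $n\sim\beta^{2/3}\gamma^{-1/3}h\,\ell^{-2/3}$ and energy $\sim\gamma^{2/3}\beta^{2/3}h\,\ell^{1/3}$, which does give your stated $\alpha^{1/3}\gamma^{2/3}h^{4/3}$). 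The genuine gap is in the third and fourth terms, and it is exactly where you hesitate. Keeping $\beta=\betaeq(\alpha)\sim\alpha^{1/3}$ and setting $n=N$ with the interior-optimal $\larc$ does \emph{not} produce $\alpha\gamma^{1/2}h^{3/2}N^{-1/2}$: the optimum value is $\sim\beta\gamma^{1/2}h^{3/2}N^{-1/2}\sim\alpha^{1/3}\gamma^{1/2}h^{3/2}N^{-1/2}$, larger than the claimed term by the factor $\alpha^{-2/3}$. This loss cannot be absorbed: in the regime $h\gg\gamma N^3$ and $\gamma^{1/4}N^{3/4}h^{-1/4}\lesssim\alpha\ll1$, with $L$ chosen in the intermediate range, the theorem's right-hand side is $\simeq\alpha\gamma^{1/2}h^{3/2}N^{-1/2}$, while \emph{all} constructions you list are strictly more expensive by unbounded factors: the $\beta=\betaeq$ one by $\alpha^{-2/3}$, the elastic one by $\alpha h^{3/2}N^{1/2}/(L\gamma^{1/2})$, and your ``near-full-delamination'' one ($\larc\sim L$, $n=N$) by $\gamma NL/(\alpha\gamma^{1/2}h^{3/2}N^{-1/2})=\gamma^{1/2}LN^{3/2}/(\alpha h^{3/2})$; the product of the last two ratios is $N^2$, so both can be made $\ge N$ simultaneously. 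Hence the constant $C$ would have to depend on $N,\gamma,\alpha$, and the theorem is not proved.

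The missing idea, which is the core of the paper's Proposition~\ref{prop:upper_small_alpha}, is that for small $\alpha$ the angle $\beta$ must itself be optimized over the whole range $[2\alpha,\betaeq(\alpha)]$, trading bending energy $\beta^2h^3/(\larc n^2)$ against a longer down-slope $\ell\simeq\alpha h/\beta^2$. The linear term comes from $\beta\simeq\alpha$ (e.g.\ $\beta=2\alpha$), $n=N$, $\larc\simeq\alpha\gamma^{-1/2}h^{3/2}N^{-3/2}$, and then $\gamma N\ell\simeq\gamma Nh/\alpha$ is dominated by $\alpha\gamma^{1/2}h^{3/2}N^{-1/2}$ precisely when $\alpha\gtrsim\gamma^{1/4}N^{3/4}h^{-1/4}$, i.e.\ in the regime where this case is invoked. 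The fourth term, which you correctly flag as unresolved, comes from the smallest admissible $\beta\simeq(\alpha h/L)^{1/2}$ forced by the constraint $\ell=\zeta h\le L/4$, with $\larc\simeq L$, $n=N$: the bending term is then $\beta^2h^3/(LN^2)\simeq\alpha h^4/(L^2N^2)$, and the delamination cost $\gamma NL$ is dominated by it exactly in the parameter regime where this choice is selected (equivalently $\gamma\lesssim\alpha h^4 L^{-3}N^{-3}$), which is the bookkeeping the paper carries out via the threshold $\betacrit=\alpha^{1/3}\gamma^{1/6}h^{-1/6}N^{1/2}$ and the admissibility window \eqref{eqcondbeta}. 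Two smaller points: Lemma~\ref{lemmacostrdelaminated} requires $\alpha\le\pi/4$, so for $\alpha\in(\pi/4,\pi/2]$ you need the paper's trick of concatenating two folds with $\alpha/2$ and $L/4$; and you should verify, as the paper does, that the spurious $\gamma h$ (respectively $\alpha\beta^{-2}\gamma h$) terms coming from rounding $n$ up to $1$ can be discarded by comparison with the elastic bound.
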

\begin{proof}
The result in Theorem \ref{thm:scaling_upper} will follow from Propositions \ref{prop:upper_large_alpha} and \ref{prop:upper_small_alpha}. The first proposition considers the case where the angle $\alpha$ is large, the second treats the case of small $\alpha$. As
$\frac{ h^3}{L N^2 }\le \frac{h^4}{L^2 N^2 }$, in the case of large $\alpha$ the last term does not appear.
\end{proof}
\begin{proposition}\label{prop:upper_large_alpha}
There are $C>0$ and $\eta\in(0,1)$ such that for $0<h\le \eta L$, $\alpha\in (0,\pi/2]$
	there is $u\in SBV^2_N(\Omega;\R^2)$ which obeys \eqref{eqboundarysinglefold2d}, is $W^{1,\infty}$ in $\Omega\setminus\overline{J_u}$, injective, 
	and obeys
\begin{equation}\label{eqprop:upper_large_alpha}
\energy_h[u]\le C \min\{ \frac{h^3}{L}, 
\gamma^{2/3}h^{4/3} 
       +\frac{ \gamma^{1/2} h^{3/2}}{N^{1/2}}
       + \frac{  h^3}{L N^2 }\}.
\end{equation} 
\end{proposition}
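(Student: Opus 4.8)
The plan is to prove \eqref{eqprop:upper_large_alpha} by taking the minimum over the three constructions already assembled in the previous subsections and optimizing the free parameters $n$ (number of layers used) and $\larc$ (arc length) in Lemma~\ref{lemmacostrdelaminated}, with the specific choice $\beta=\betaeq(\alpha)$ throughout. Since $\alpha\in(0,\pi/2]$ is now treated as order one, in this proposition the explicit $\alpha$-dependence is suppressed into the constant $C$, which is why \eqref{eqprop:upper_large_alpha} has no $\alpha$ factors. The first term $h^3/L$ comes directly from Lemma~\ref{lemmacontinuousbending} (plate bending, no delamination): one just checks $\alpha\le\pi/2$ so $\alpha^2$ is absorbed into $C$, and the injectivity and regularity claims are inherited verbatim.

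The other two terms come from Lemma~\ref{lemmacostrdelaminated}. With $\beta=\betaeq(\alpha)$ fixed, the quantity $\ell=\frac{h\sin\alpha}{\cos\alpha-\cos\betaeq(\alpha)}$ is a fixed multiple of $h$ (say $\ell=\zeta(\alpha)h$ with $\zeta(\alpha)$ order one for $\alpha$ away from $0$), so the energy bound reads $\energy_h[u]\le c\bigl(\gamma n(\ell+\larc)+\beta^2 h^3/(\larc n^2)\bigr)$. For fixed $n$ I would optimize over $\larc$ subject to the constraint $\frac{2\beta h}{n}\le\larc\le\frac18 L$: the unconstrained minimizer is $\larc\sim \beta h^{3/2}/(\gamma^{1/2}n^{3/2})$, giving a contribution $\sim \gamma^{1/2}\beta h^{3/2} n^{-1/2}$ from the balanced terms plus the residual $\gamma n\ell\sim\gamma n h$ from the delamination of the down-slope. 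Then optimizing over $n\in\{1,\dots,N\}$: the function $\gamma n h + \gamma^{1/2}\beta h^{3/2}n^{-1/2}$ is, up to constants, minimized at $n\sim (\beta/\gamma^{1/2})^{2/3} = \gamma^{-1/3}h^{1/2}$ (using $\beta\sim\alpha^{1/3}$, but here order one), yielding the value $\gamma^{2/3}h^{4/3}$; if that optimal $n$ exceeds $N$ one takes $n=N$ instead and gets $\gamma N h + \gamma^{1/2}h^{3/2}N^{-1/2}$, whose second term dominates when $\gamma N^3\le h$ — exactly the standing assumption — giving the term $\gamma^{1/2}h^{3/2}N^{-1/2}$. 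Taking $\min$ over these two choices of $n$ produces the sum of the second and third terms in \eqref{eqprop:upper_large_alpha} (the third, $h^3/(LN^2)$, actually comes from the alternative of fully delaminating all $N$ layers and bending each as a plate of thickness $h/N$, i.e.\ applying Lemma~\ref{lemmacontinuousbending} layerwise — I would include this as a fourth comparison map and note it beats the others precisely in the large-angle small-$\gamma$ corner).

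The main obstacle is bookkeeping the constraints in \eqref{eqassconstrde} rather than any genuine analytic difficulty: one must verify that the optimal or near-optimal choices of $\larc$ and $n$ actually lie in the admissible ranges, and when they do not (e.g.\ $\larc$ would want to be below $2\beta h/n$, or above $L/8$) one falls back to the boundary value of the parameter and checks that the resulting energy is still bounded by the stated minimum — this is where the hypotheses $h\le\eta L$, $h>\gamma N^3$, etc., get used, and where one has to be careful that the $o(\alpha^{1/3})$-type estimates for $\betaeq$ near $\alpha=0$ are irrelevant because $\alpha$ is bounded below in spirit (the honest statement is that all the constants can be taken uniform for $\alpha\in(0,\pi/2]$ since $\betaeq$ and $\zeta$ extend continuously, with $\zeta(\alpha)\to\infty$ only as $\alpha\to0$, which forces the constraint $\ell\le L/4$ to do real work — handled by the small-$\alpha$ Proposition~\ref{prop:upper_small_alpha} instead, so here we may assume $\alpha$ bounded below and fold that into $\eta$ and $C$). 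Finally I would assemble: for any given $(h,L,\gamma,N,\alpha)$ pick whichever of the (at most) four maps gives the smallest energy; each is injective, piecewise $W^{1,\infty}$ with $\calH^1(\overline{J_u}\setminus J_u)=0$ and satisfies the boundary conditions by the cited lemmas, so the infimum over them inherits all these properties, completing the proof.
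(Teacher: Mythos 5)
Your overall plan (plate bending from Lemma~\ref{lemmacontinuousbending} for the first term, Lemma~\ref{lemmacostrdelaminated} with $\beta=\betaeq(\alpha)$ and optimization in $\larc$ and $n$ for the rest) is the paper's route, but there is a genuine gap at the large-angle end, caused by a factual error about $\zeta$. Along $\beta=\betaeq(\alpha)$ the behavior of $\zeta(\alpha)=\frac{\sin\alpha}{\cos\alpha-\cos\betaeq(\alpha)}$ is exactly the opposite of what you claim: by \eqref{eqbetaeqsmallalpha}, $\betaeq(\alpha)\sim 4^{1/3}\alpha^{1/3}$, so $\cos\alpha-\cos\betaeq(\alpha)\sim \tfrac12(4\alpha)^{2/3}$ and $\zeta(\alpha)\sim 2^{-1/3}\alpha^{1/3}\to 0$ as $\alpha\to0$; whereas for $\alpha\to\tfrac\pi2$, since $\betaeq(\alpha)\in(\alpha,\tfrac\pi2)$ forces $\cos\alpha-\cos\betaeq(\alpha)\le\cos\alpha$, one has $\zeta(\alpha)\ge\tan\alpha\to\infty$. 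Consequently the constraint $\ell=\zeta h\le \tfrac14 L$ in \eqref{eqassconstrde} cannot be met for $\alpha$ close to $\tfrac\pi2$ with any fixed $\eta$, so your delaminated construction is inadmissible there; and your proposed fallback (deferring to Proposition~\ref{prop:upper_small_alpha}) only covers $\alpha\le\alpha_0$, i.e.\ the end of the interval where in fact no problem arises. The missing ingredient is the paper's Step~1: for $\alpha>\tfrac\pi4$ one concatenates two folds of angle $\alpha/2$ over lengths $L/4$, reducing to $\alpha\in(0,\tfrac\pi4]$, on which $\alpha\mapsto\zeta(\alpha)$ is continuous and tends to $0$ at $0$, hence uniformly bounded, so that $h\le\eta L$ suffices for \eqref{eqassconstrde}. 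Without this (or an alternative such as choosing $\beta<\betaeq$ for large $\alpha$ with the attendant re-estimation), the bound \eqref{eqprop:upper_large_alpha} is not established for $\alpha$ near $\tfrac\pi2$.

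Two smaller points. The optimal number of layers in the partial-delamination regime is $n\sim\gamma^{-1/3}h^{1/3}$ (balancing $\gamma n h$ against $\gamma^{1/2}h^{3/2}n^{-1/2}$), not $\gamma^{-1/3}h^{1/2}$ as written; the resulting value $\gamma^{2/3}h^{4/3}$ you state is nevertheless correct. Also, the term $h^3/(LN^2)$ does not require a separate ``layerwise Lemma~\ref{lemmacontinuousbending}'' comparison map (which would need its own injectivity verification and an accounting of the delamination cost $\sim\gamma N L$ over the full length): it is already produced inside Lemma~\ref{lemmacostrdelaminated} by taking $n=N$ and $\larc=L/8$, in which regime the condition $h>N\gamma^{1/3}\larc^{2/3}$ guarantees that $\gamma N\larc$ and $\gamma N h$ are dominated by $h^3/(\larc N^2)$, so the stated bound holds without an extra $\gamma LN$ term.
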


\begin{proof}
We prove the proposition in 3 steps.

\emph{Step 1.} 
If $\alpha>\frac\pi4$ we can combine two folds with $L':=L/4$ and $\alpha':=\alpha/2$. Therefore we assume in the remainder of the proof that $\alpha\in(0,\frac\pi4]$.

\emph{Step 2.} By Lemma \ref{lemmacontinuousbending} there is an injective  map $u\in W^{1,\infty}(\omega_h;\R^2)$ with 
$\energy_h[u]\le C  h^3/ L$. This proves the first bound.

\emph{Step 3.} 
We intend to use Lemma \ref{lemmacostrdelaminated}  with
 $\beta:=\betaeq(\alpha)$. 
 We first check the second condition in \eqref{eqassconstrde}.
The function %
\begin{equation}
     \alpha\mapsto \frac{\sin\alpha}{\cos\alpha-\cos\betaeq(\alpha)}
\end{equation}
 is continuous on $(0,\frac\pi4]$, and as $\betaeq = \Oh(\alpha^{1/3})$ for small $\alpha$, 
 it converges to 0 for $\alpha\to0$. Therefore there is $C>0$ such that
\begin{equation}
0<  \frac{\sin\alpha}{\cos\alpha-\cos\betaeq(\alpha)}\le C \hskip5mm
\text{ for all } \alpha\in(0,\frac{\pi}{4}].
\end{equation}
 
By Lemma \ref{lemmacostrdelaminated} we thus know that 
for any $n\in\{1,\dots, N\}$ and $\larc>0$, if 
 \begin{equation} \label{eq:constr1}
    \frac{2\beta h}{n} \le \larc\le \frac 18 L \hskip5mm
    \text{ and }\hskip5mm
    C h \le\frac{1}{4}L
\end{equation}
then 
\begin{equation}\label{eqJu1}
       \inf_u\energy_h[u]\le c\left( \gamma h n+\gamma \larc  n+ \frac{ h^3}{\larc n^2 }\right).
\end{equation}
We assume that
\begin{equation}\label{eqhLadmissibile}
    \frac hL \le \min\{\frac{1}{4C},\frac1{8\pi}\},
\end{equation}
which is guaranteed if $\eta$ is chosen appropriately. Noting that reducing $\larc$ below $\Oh(h)$ does not reduce the energy, due to the first term in \eqref{eqJu1}, we can choose
\begin{equation}
    \larc\in [\pi h, \frac L8].
\end{equation}
Since $\beta\le \frac\pi2$, we are automatically assured that all requirements in \eqref{eq:constr1} are satisfied. At the same time, if $\larc$ is chosen in this range then we can ignore the first term in \eqref{eqJu1}. Condition  \eqref{eqhLadmissibile} ensures that the set of possible values of $\larc$ is nonempty. 

We first optimize the choice of $n$. Set
\begin{equation}
    n:=\min\left\{N,\left\lceil \frac{h}{\gamma^{1/3} \larc^{2/3}}\right\rceil\right\}\in\{1, \dots, N\}.
\end{equation}
Inserting in \eqref{eqJu1} leads to
\begin{equation}
       \inf_u  \energy_h[u]\le 
       \begin{cases}
       \displaystyle c\left(\gamma \larc  + \gamma^{2/3}h\larc^{1/3}\right) ,& \text{ if } h\le N \gamma^{1/3}\larc^{2/3},\\
       \displaystyle c \frac{ h^3}{\larc N^2 }, & \text{ otherwise,}
       \end{cases}
\end{equation}
where the term $\gamma \larc$ stems from the fact the $n\ge 1$.

It remains to choose the value of $\larc$. Since the first expression is strictly increasing, and the second one strictly decreasing, if the critical value is admissible then it is optimal. Precisely, if $\larc^\text{crit} := \gamma^{-1/2} h^{3/2}N^{-3/2}
\in[\pi h, L/8]$ then we obtain a bound of the form $h^{3/2}\gamma^{1/2}/N^{1/2}$.
Otherwise, the relevant constraint is the one that prevents this optimal value from being chosen. Specifically,
\begin{equation}
       \inf_u  \energy_h[u]\le 
       \begin{cases}
       \displaystyle c\left(\gamma h  + \gamma^{2/3}h^{4/3}\right) ,& \text{ if } \gamma^{-1/2} h^{3/2}N^{-3/2}<\pi h,\\[1mm]
       \displaystyle c\frac{\gamma^{1/2}h^{3/2}}{N^{1/2}}, &\text{ if }  \gamma^{-1/2} h^{3/2}N^{-3/2}\in[\pi h, L/8],\\[2mm]
       \displaystyle c \frac{ h^3}{L N^2 }, & \text{ if } L< 8 \gamma^{-1/2}h^{3/2}N^{-3/2},
       \end{cases}
\end{equation}
or, equivalently,
\begin{equation}\label{eqenfinalalar}
       \inf_u  \energy_h[u]\le 
       \begin{cases}
       \displaystyle c\left(\gamma h  + \gamma^{2/3}h^{4/3}\right) ,& \text{ if } h<\gamma N^3,\\[1mm]
       \displaystyle c\frac{\gamma^{1/2}h^{3/2}}{N^{1/2}}, &\text{ if }   \gamma N^3\le h\le \gamma^{1/3}L^{2/3}N,\\[2mm]
       \displaystyle c \frac{ h^3}{L N^2 }, & \text{ if } \gamma^{1/3}L^{2/3}N< h.
       \end{cases}
\end{equation}
We remark that the first and the last regime are disjoint, since 
$\gamma^{1/3}L^{2/3}N<h$ and $h\le L$ imply
$\gamma^{1/3}h^{2/3}N< h$, which is equivalent to
$\gamma N^3< h$.

The calculation above thus reveals energetic regimes as follows, which differ in the number of delaminated layers and the delamination length.
\begin{description}
\item{Sharp fold partial delamination:}
       The energy scaling $\gamma h  + \gamma^{2/3}h^{4/3}$ has the shortest possible delamination length $\larc=\pi h$ and $n\sim \lceil \gamma^{-1/3}h^{1/3}\rceil$. 
       It originates by balancing the two terms 
$\gamma \larc  n+ \frac{ h^3}{\larc n^2 }$ after setting $\larc=h$.

The term $\gamma h$, corresponding to $n=1$, can be dropped. To see this, we first note that it is only relevant if $h\le\gamma$. In this regime, however, it would be convenient not to delaminate at all, obtaining an energy $h^3 L^{-1}$. Indeed, as $h\le L$, $h\le\gamma$ implies $h^3 L^{-1} \le h^2 \le \gamma h$.

\item{Localized full delamination:} The energy scales as 
$\frac{\gamma^{1/2}h^{3/2}}{N^{1/2}}$. As $n=N$, each layer is delaminated, however only over a length $\larc=\gamma^{-1/2}h^{3/2}N^{-3/2}$.
This originates by balancing the two terms 
$\gamma \larc  n+ \frac{ h^3}{\larc n^2 }$ after setting $n=N$. All layers are delaminated, but only over a length $\larc$. The energy corresponds to $N$ plates of thickness $h/N$ bent over a length $\larc$, the value is determined so that this exactly balances the delamination energy.

\item{Total delamination:} The energy scales as $\frac{ h^3}{L N^2 }$, there are $N$ separate plates of thickness $h/N$, each bending over the entire available length $L$, the delamination energy is smaller.
\end{description}

Since the powers of $h$ are increasing, and the expression is continuous, one can also write the above regimes concisely as
in \eqref{eqprop:upper_large_alpha}.
\end{proof}

\begin{proposition} \label{prop:upper_small_alpha}
There are $C>0$, $\alpha_0\in(0,\pi/2]$ such that for $0<h\le L$, $\alpha\in (0,\alpha_0]$
	there is $u\in SBV^2_N(\Omega;\R^2)$ which obeys \eqref{eqboundarysinglefold2d}, is $W^{1,\infty}$ in $\Omega\setminus\overline{J_u}$, injective, 
	and obeys
\begin{equation}
\energy_h[u]\le C \min\{ \frac{\alpha^2 h^3}{L}, 
\alpha^{1/3} \gamma^{2/3}h^{4/3} 
       +\frac{\alpha \gamma^{1/2} h^{3/2}}{N^{1/2}}
       + \frac{\alpha^2  h^3}{L N^2 }
      + \frac{\alpha  h^4}{L^2 N^2 }
       \}.
\end{equation} 
\end{proposition}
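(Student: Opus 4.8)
The plan is to follow the structure of the proof of Proposition~\ref{prop:upper_large_alpha} (which treats $\alpha$ as an $O(1)$ quantity) while tracking the $\alpha$-dependence; the one genuinely new point will be that the fold angle $\beta$ has to be chosen comparable to $\alpha$ — not to $\betaeq(\alpha)$ — in the full-delamination regimes. First I would invoke Lemma~\ref{lemmacontinuousbending} to get an injective $W^{1,\infty}$ map with $\energy_h[u]\le C\alpha^2h^3/L$; this settles the first term of the minimum and, moreover, if $h\le\gamma$ it already settles the whole bound, since $\alpha^2h^3/L\le\alpha^2h^2\le\alpha^{1/3}h^2\le\alpha^{1/3}\gamma^{2/3}h^{4/3}$ by $h\le L$, $\alpha\le1$ and $\gamma\ge h$. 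Likewise for $N=1$ no delamination is possible and the bound reduces to plate bending. So from here on I would assume $h>\gamma$ and $N\ge2$.

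For the delaminated constructions I would use Lemma~\ref{lemmacostrdelaminated}: for every $n\in\{1,\dots,N\}$, $\beta\in(\alpha,\betaeq(\alpha)]$ and $\larc\in[2\beta h/n,\,L/8]$ with $\ell:=\frac{\sin\alpha}{\cos\alpha-\cos\beta}h\le L/4$ there is an admissible injective map obeying $\energy_h[u]\le c\big(\gamma n(\ell+\larc)+\beta^2h^3(\larc n^2)^{-1}\big)$. For small angles I would use $\betaeq(\alpha)=4^{1/3}\alpha^{1/3}(1+o(1))$ from Lemma~\ref{lemmabetaequcaff} together with the Taylor expansion $\cos\alpha-\cos\beta=\tfrac12(\beta^2-\alpha^2)(1+o(1))$, so that $\ell=\frac{2h\alpha}{\beta^2-\alpha^2}(1+o(1))$ uniformly for $\alpha\le\alpha_0$, $\alpha<\beta\le\betaeq(\alpha)$; I would fix $\alpha_0$ small enough that all $o(1)$ error factors lie in $[\tfrac12,2]$ and so that the length constraints below can be met.

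Then I would distinguish cases as in \eqref{eqenfinalalar}, according to the ordering of $h$, $\gamma N^3$ and $\alpha^4h$, and in each case bound the constructed energy by a constant times one of the four target terms $\alpha^{1/3}\gamma^{2/3}h^{4/3}$, $\alpha\gamma^{1/2}h^{3/2}N^{-1/2}$, $\alpha^2h^3(LN^2)^{-1}$, $\alpha h^4(L^2N^2)^{-1}$; since these are nonnegative, this bounds the energy by a constant times their sum, which together with Step~1 gives the claimed minimum. If $h\le\gamma N^3$ I would take $\beta=\betaeq(\alpha)$ (so $\ell\lesssim\alpha^{1/3}h$, $\beta^2\lesssim\alpha^{2/3}$), $n=\lceil(h/\gamma)^{1/3}\rceil\in\{1,\dots,N\}$ and $\larc\sim\alpha^{1/3}h$; using $h>\gamma$ one checks $2\beta h/n\le\larc\le L/8$ and $\ell\le L/4$, and obtains $\energy_h[u]\le c\,\alpha^{1/3}\gamma^{2/3}h^{4/3}$. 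If $h>\gamma N^3$ I would take $n=N$ and balance the terms: when $\alpha^4h\le\gamma N^3$ the optimal $\beta\sim\alpha^{1/3}\gamma^{1/6}N^{1/2}h^{-1/6}$ still lies in $(\alpha,\betaeq(\alpha)]$ and gives again $c\,\alpha^{1/3}\gamma^{2/3}h^{4/3}$; when $\alpha^4h>\gamma N^3$ the right choice is $\beta\sim\alpha$ (e.g.\ $\beta=2\alpha$), so that $\ell\sim h/\alpha$, $\larc\sim\alpha\gamma^{-1/2}h^{3/2}N^{-3/2}$, and the down-slope cost $\gamma N\ell$ is dominated by the balanced value $\sim\alpha\gamma^{1/2}h^{3/2}N^{-1/2}$. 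Finally, if in these cases the prescribed $\larc$ would exceed $L/8$, or if $\ell\le L/4$ prevents $\beta$ from being taken as small as above, I would pass to total delamination: $n=N$, $\larc=L/8$, and $\beta$ with $\ell=\min\{\tfrac{2h}{3\alpha},\tfrac L4\}$ (equivalently $\beta^2-\alpha^2\sim\max\{\alpha^2,h\alpha/L\}$), which yields $\energy_h[u]\le c\big(\gamma NL+\alpha^2h^3(LN^2)^{-1}+\alpha h^4(L^2N^2)^{-1}\big)$; the term $\gamma NL$ is absorbed because one reaches this case only when $\gamma N^3 L^2\lesssim\alpha^2h^3$ or $\gamma N^3 L^3\lesssim\alpha h^4$ — the same ``delamination is cheaper'' mechanism used at the end of the proof of Proposition~\ref{prop:upper_large_alpha} — and the powers of $N$ match so that $\gamma NL\lesssim\alpha^2h^3(LN^2)^{-1}$ resp.\ $\gamma NL\lesssim\alpha h^4(L^2N^2)^{-1}$.

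I expect the main obstacle to be the bookkeeping: one has to verify $2\beta h/n\le\larc\le L/8$, $\ell\le L/4$, and the domination of the delamination contributions in each of the several sub-cases, and to check that the small-angle estimates are uniform over the relevant range of $\beta$. The conceptually new element, compared with Proposition~\ref{prop:upper_large_alpha}, is that $\beta=\betaeq(\alpha)\sim\alpha^{1/3}$ no longer suffices: for the full-delamination regimes the bending energy must scale like $\alpha^2$ (not $\alpha^{2/3}$), which forces $\beta\sim\alpha$ while still respecting $\beta>\alpha$; and the extra term $\alpha h^4(L^2N^2)^{-1}$ appears precisely in the range $\alpha\lesssim h/L$, where the constraint $\ell\le L/4$ forces $\beta^2\sim h\alpha/L>\alpha^2$ and hence a bending energy of order $\alpha h^4(L^2N^2)^{-1}$.
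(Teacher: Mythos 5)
Your proposal follows essentially the same route as the paper's proof: the elastic bound from Lemma~\ref{lemmacontinuousbending}, then Lemma~\ref{lemmacostrdelaminated} with the small-angle expansion of $\zeta$, and the same delaminated sub-regimes obtained by choosing $\beta\sim\alpha^{1/3}$ (partial delamination), $\beta\sim\betacrit=\alpha^{1/3}\gamma^{1/6}h^{-1/6}N^{1/2}$, $\beta\sim\alpha$, and $\beta\sim(\alpha h/L)^{1/2}$, with the delamination cost $\gamma n\ell$ absorbed exactly as in the paper, so the argument is correct in substance. The one point that needs more than bookkeeping is your choice $\beta\sim\betacrit$ on the whole range $\alpha^4h\le\gamma N^3$: when $\betacrit\in(\alpha,2\alpha)$ the denominator $\beta^2-\alpha^2$ degenerates, so $\ell$ and hence $\gamma N\ell$ need not be $\lesssim\alpha^{1/3}\gamma^{2/3}h^{4/3}$; the paper avoids this by imposing $\beta\ge 2\alpha$ throughout (so $\cos\alpha-\cos\beta\gtrsim\beta^2$ uniformly), and in your scheme taking $\beta=\max\{2\alpha,\betacrit\}$ repairs it, since in that band $\alpha^{1/3}\gamma^{2/3}h^{4/3}$ and $\alpha\gamma^{1/2}h^{3/2}N^{-1/2}$ are comparable and both appear in the target sum.
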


\begin{proof} As above, we prove the two bounds separately. \\
\emph{Step 1.} Again, we use Lemma \ref{lemmacontinuousbending} to obtain an injective map $u\in W^{1,\infty}(\omega_h;\R^2)$ with 
$\energy_h[u]\le C  \alpha^2h^3/ L$.

\emph{Step 2.}
The delaminated construction is obtained with Lemma \ref{lemmacostrdelaminated} with a careful choice of the parameters.
In this regime of small $\alpha$, any delaminated construction must also satisfy that $\beta$ is small, as $\beta \le \betaeq(\alpha)\simeq (4\alpha)^{1/3}$ for $\alpha\to0$. A straightforward Taylor series expansion of the delamination length factor $\zeta$ leads to
\begin{equation}
   \zeta =  \frac{\sin\alpha}{\cos\alpha-\cos\beta}= \frac{2\alpha + O(\alpha^3)} {\beta^2-\alpha^2+ O(\beta^4)+O(\alpha^4)}.
\end{equation}
Since $\alpha\le \beta$, the $O(\alpha^4)$ in the denominator can be ignored. It is however important to avoid that $\beta^2-\alpha^2$ in the denominator becomes too small.
For this reason we shall restrict the choice of $\beta$ by assuming $\beta\ge 2\alpha$, which implies
$\cos\alpha-\cos\beta\ge \cos\frac\beta2-\cos\beta=\frac38 \beta^2+O(\beta^4)$. For $\beta$ sufficiently small, this is larger than $\beta^2/3$. Therefore
there is $\alpha_0>0$ such that 
for all $\alpha\in (0,\alpha_0]$  one has
$2\alpha\le\betaeq(\alpha)$ and
for all $\beta\in [2\alpha,\betaeq(\alpha)]$
\begin{equation}
  \zeta= \frac{\sin\alpha}{\cos\alpha-\cos\beta}\le  3\frac{\alpha } {\beta^2}.
\end{equation}
By Lemma \ref{lemmacostrdelaminated} we know that 
for any $n\in\{1,\dots, N\}$ and $\larc>0$, if 
 \begin{equation} \label{eq:admissible2}
    \frac{2\beta h}{n} \le \larc \le \frac 18 L \hskip5mm
    \text{ and }\hskip5mm
    3  \frac{\alpha}{\beta^2}h \le\frac{1}{4}L
\end{equation}
(noting that $h\le L$ guarantees that the set of admissible $\beta\le\betaeq$ is not empty, after potentially decreasing $\alpha_0$ again) then 
\begin{equation}\label{eqJu12}
       \inf_u\energy_h[u]\le c\left( \frac{\alpha}{\beta^2}\gamma h n +\gamma \larc  n+ \frac{\beta^2 h^3}{\larc n^2 }\right).
\end{equation}
We assume that
\begin{equation} \label{eq:admissible3}
    \frac{\alpha}{\beta^2}h\le \larc\le \frac18 L
\end{equation}
so that the first term in the energy can be ignored.
We next compare the first lower bound in \eqref{eq:admissible2} with the one in \eqref{eq:admissible3}. 
Since we know that 
$\beta\le \betaeq(\alpha)\le c \alpha^{1/3}$, and $n\ge 1$, we obtain for some $c_*>0$
\begin{equation}
     \frac{2\beta h}{n}\le c_*\frac{\alpha}{\beta^2}h.
\end{equation}
Without loss of generality we can assume $c_*\ge\frac32$.
We then see that all conditions in \eqref{eq:admissible2} and in  \eqref{eq:admissible3} are satisfied provided
\begin{equation}\label{eq:admissible23}
     c_*\frac{\alpha}{\beta^2}h\le \larc\le \frac18 L.
\end{equation}
By the same argument we see that this condition is optimal (up to a factor).
We choose
\begin{equation}
    n:=\min\left\{N,\left\lceil \frac{\beta^{2/3}h}{\gamma^{1/3} \larc^{2/3}}\right\rceil\right\}\in\{1, \dots, N\}.
\end{equation}
Inserting in \eqref{eqJu12} leads to
\begin{equation}
       \inf_u  \energy_h[u]\le 
       \begin{cases}
       \displaystyle c\left(\gamma \larc  +  \beta^{2/3} \gamma^{2/3} h \larc^{1/3}\right) ,& \text{ if } \beta^{2/3}h\le  \gamma^{1/3}\larc^{2/3} N,\\[2mm]
       \displaystyle c \frac{ \beta^2 h^3}{\larc N^2 }, & \text{ otherwise,}
       \end{cases}
\end{equation}
noting again that the term $\gamma \larc$ stems from rounding $n$ up to the next integer, which is needed as we require $n\ge1$. As in Step~3 of the proof of Proposition~\ref{prop:upper_large_alpha}, the first expression is increasing in $\larc$, the second decreasing. Therefore the optimal value is the critical one, $\larc^\text{crit}:=\beta \gamma^{-1/2} h^{3/2}N^{-3/2}$, if it is admissible in the sense of \eqref{eq:admissible23}. If not, the optimal value is the admissible value closest to the critical one.

Assume now that $\larc=\larc^\text{crit}$ is admissible. Then one obtains $n=N$, and 
$\inf_u  \energy_h[u]\le 
\displaystyle c\frac{\beta \gamma^{1/2}h^{3/2}}{N^{1/2}}$. Restating the requirements on $\larc$ from 
\eqref{eq:admissible23}, this is possible if 
$\larc^\text{crit}\ge c_*\alpha h/\beta^{2}$ and
$\larc^\text{crit}\le L/8$, which is the same as 
\begin{equation}
h \ge c_*^2\alpha^2\beta^{-6}\gamma N^3 \label{eq:lower_h2}  \end{equation}
and
\begin{equation}
h \le \frac14 \beta^{-2/3}\gamma^{1/3}L^{2/3} N \label{eq:upper_h}.
\end{equation}

Assume now that one of these two conditions is violated
(one can easily check that if  $c_*\frac{\alpha}{\beta^2}h\le \frac18 L$, so that \eqref{eq:admissible23} gives a nonempty set of admissible values of $\larc$, then it is not possible for both 
\eqref{eq:lower_h2} and \eqref{eq:upper_h}
to be violated at the same time).
\begin{itemize}
\item If \eqref{eq:lower_h2} is violated, then 
$\larc=c_*  \alpha \beta^{-2}h$,
$n= \left\lceil \frac{\beta^{2/3}h}{\gamma^{1/3} \larc^{2/3}}\right\rceil$,
and
$\inf_u  \energy_h[u]\le c( \alpha\beta^{-2}\gamma h +\alpha^{1/3}\gamma^{2/3} h^{4/3})$.
\item If  \eqref{eq:upper_h} is violated, then $\larc=L/8$, $n=N$, and 
$\inf_u  \energy_h[u]\le \displaystyle c \frac{ \beta^2h^3}{L N^2 }$.
\end{itemize}

Summarizing, this leads to
\begin{equation}
E:=       \inf_u  \energy_h[u]\le 
       \begin{cases}
       \displaystyle c\left(\frac{\alpha\gamma h}{\beta^{2} } + \alpha^{1/3}\gamma^{2/3}h^{4/3}\right) ,& \text{ if }  h<c_*^2\alpha^2\beta^{-6}\gamma N^3  ,\\[2mm]
       \displaystyle 
       c\frac{\beta \gamma^{1/2}h^{3/2}}{N^{1/2}}, &\text{ if } 
      c_*^2\alpha^2\beta^{-6}\gamma N^3 \le h \le \frac14 \beta^{-2/3}\gamma^{1/3}L^{2/3} N,\\[2mm]
       \displaystyle c \frac{ \beta^2h^3}{L N^2 }, & \text{ if } 
       h> \frac14 \beta^{-2/3}\gamma^{1/3}L^{2/3} N,
       \end{cases}
\end{equation}
or, equivalently,
\begin{equation}\label{eqinfenlbs3}
       E \le 
       \begin{cases}
       \displaystyle c\left( \frac{\alpha\gamma h}{\beta^{2} }  + \alpha^{1/3} \gamma^{2/3}h^{4/3}\right) ,& \text{ if } 
       \beta<\alpha^{1/3}\gamma^{1/6}h^{-1/6}N^{1/2},\\[2mm]
       \displaystyle c\frac{\beta \gamma^{1/2}h^{3/2}}{N^{1/2}}, &\text{ if }  
       \alpha^{1/3}\gamma^{1/6}h^{-1/6}N^{1/2}\le\beta\le 
       \gamma^{1/2} h^{-3/2}L N^{3/2},
       \\[2mm]
       \displaystyle c \frac{ \beta^2h^3}{L N^2 }, & \text{ if }
       \beta> \gamma^{1/2}h^{-3/2}LN^{3/2}.
       \end{cases}
\end{equation}
It remains to choose $\beta\in[2\alpha,\betaeq(\alpha)]$
with $c_*\alpha\beta^{-2}h\le \frac18 L$
 (from \eqref{eq:admissible23}), 
 i.e.,
\begin{equation}\label{eqcondbeta}
\max\{2\alpha, (8c_*\alpha h/L)^{1/2}\}\le\beta\le \betaeq(\alpha). 
\end{equation}
 These bounds scale as $\alpha+\alpha^{1/2}(h/L)^{1/2}$ and $\alpha^{1/3}$, respectively, and in particular, possibly reducing $\alpha_0$, we can ensure that the set is not empty for all values of the parameters.

 We observe that the expression in \eqref{eqinfenlbs3} is minimized at $\betacrit:=\alpha^{1/3}\gamma^{1/6}h^{-1/6}N^{1/2}$, and that -- provided $\alpha_0$ is chosen sufficiently small -- $\alpha^{1/3}\le\betaeq(\alpha)$ for all $\alpha\in(0,\alpha_0]$. There are different regimes, both of which are characterized by the delamination length being at the minimum value still satisfying the injectivity conditions. 
In these two regimes the optimal scaling is attained by the choice $\beta=\betaeq(\alpha)$, which corresponds to the fact that the layers touch each other, as illustrated in Figure~\ref{fig:2d_construction}.
The first one delaminates some of the layers, the second one all layers.
 \begin{description}
\item{Sharp fold partial delamination:} If $\betacrit>\alpha^{1/3}$, which is the same as $h<\gamma N^3$,  then we take $\beta=\alpha^{1/3}$, and we are in the first regime in \eqref{eqinfenlbs3}. This gives $E\le c\alpha^{1/3}\gamma h + c\alpha^{1/3} \gamma^{2/3}h^{4/3}$.
The first term does not contribute. Indeed, if $ \alpha^{1/3} \gamma^{2/3}h^{4/3}\le \alpha^2 h^3/L$ then necessarily $\gamma^{2/3}L\le \alpha^{5/3}h^{5/3}$, which, since $h\le L$ and $\alpha\le 1$, implies $\gamma\le h$. Therefore we can drop the $\alpha^{1/3}\gamma h$ term and obtain 
$E\le c\min\{\alpha^{1/3} \gamma^{2/3}h^{4/3},\alpha^2 h^3/L\}$. In this regime  $\larc= \alpha^{1/3}h$ and $n\simeq \gamma^{-1/3}h^{1/3}\le N$, so that the energy bound
behaves as $\gamma\larc n$.

\item{Sharp fold full delamination:}
If $
\max\{2\alpha, (8c_*\alpha h/L)^{1/2}\}\le \betacrit\le
\alpha^{1/3}$, then $\beta=\betacrit$ is optimal and 
$E\le c\alpha^{1/3}\gamma^{2/3}h^{4/3}$.
In this regime $n=N$ and $\larc=\alpha^{1/3}\gamma^{-1/3}h^{4/3} N^{-1}$ (indeed, the bound on $E$ is of the form $\gamma \larc N$).
\end{description}
If $
\betacrit <\max\{2\alpha, (8c_*\alpha h/L)^{1/2}\}$
then we take $\beta=\max\{2\alpha, (8c_*\alpha h/L)^{1/2}\}$, and we are in the second or third regime in \eqref{eqinfenlbs3}. This gives in principle four subcases, one of which cannot occur. The first of the three possible cases still delaminates only locally, the other two are delaminated over the entire length of the specimen. In any case, all layers are delaminated.
\begin{description}
\item{Localized full delamination:} 
Assume first that $\alpha\ge 2c_* h/L$, 
so that $\beta=2\alpha$.
If 
$2\alpha\le\gamma^{1/2}h^{-3/2}LN^{3/2}$ then 
we are in the second regime in \eqref{eqinfenlbs3} (with $n=N$ and $\larc=\larc^\text{crit}=2\alpha \gamma^{-1/2} h^{3/2}N^{-3/2}$) and
$E\le c\frac{\alpha \gamma^{1/2}h^{3/2}}{N^{1/2}}$.

\item{Total delamination:} If 
$\alpha\ge 2c_* h/L$ and
$2\alpha>\gamma^{1/2}h^{-3/2} L N^{3/2}$, then 
we are in the third regime in \eqref{eqinfenlbs3} (with $n=N$ and $\larc= L/8$) and
$E\le c\frac{ \alpha^2h^3}{L N^2 }$.

\item{Small-angle total delamination:}
If instead 
$\alpha< 2c_* h/L$ then $\beta=(8c_*\alpha h/L)^{1/2}$. 
A short computation shows that $\betacrit\le (\alpha h/L)^{1/2}$ is equivalent to $(\alpha h/L)^{1/2}\ge \gamma^{1/2}h^{-3/2}LN^{3/2}$, so that we are necessarily in the third regime in \eqref{eqinfenlbs3} (with $n=N$ and $\larc= L/8$)  and $E\le c \frac{\alpha h^4}{ L^{2}N^{2}}$.
\end{description}

In all cases we may conclude
\begin{equation}
\inf_u  \energy_h[u]\le 
       c\min\left\{
       \frac{\alpha^2h^3}{L},
       \alpha^{1/3}\gamma^{2/3}h^{4/3} 
       +\frac{\alpha \gamma^{1/2} h^{3/2}}{N^{1/2}}
       {+ \frac{\alpha  h^4}{L^2 N^2 }}
       + \frac{\alpha^2 h^3}{L N^2 }
       \right\}.
\end{equation}
\end{proof}

\begin{remark}\label{eqmarkhsmall}
A more precise summary of the cases above shows that 
for $h\le \gamma N^3$, then only the ``localized partial delamination'' regime is relevant, and one obtains
(for $\alpha\in(0,\alpha_0]$)
 \begin{equation}
\inf_u  \energy_h[u]\le 
       c\min\left\{
       \frac{\alpha^2h^3}{L},
       \alpha^{1/3}\gamma^{2/3}h^{4/3} 
       \right\}.
\end{equation}
Recalling \eqref{eqenfinalalar} one can see that this holds for all $\alpha\in(0,\frac\pi2]$. As the delamination length is proportional to $\alpha^{1/3}h$, it is always smaller than $L$. In turn, the number of delaminated layes $n\simeq \gamma^{-1/3}h^{1/3}$ does not depend on the opening $\alpha$. Both the delamination length and the number of delaminated layers are discontinuous at yielding point $\alpha\simeq \gamma^{2/5}L^{3/5}/h$.
\end{remark}

\begin{remark}\label{remarkubfll}
The statement in Theorem \ref{thm:scaling_upper} (analogously in Propositions \ref{prop:upper_large_alpha} and \ref{prop:upper_small_alpha}), can also be written in the form
\begin{equation}
\energy_h[u]\le C \min\left\{ \frac{\alpha^2h^3}{L}, 
\max\bigg\{\alpha^{1/3} \gamma^{2/3}h^{4/3} 
       ,\frac{\alpha \gamma^{1/2} h^{3/2}}{N^{1/2}}
       ,   {  \frac{\alpha  h^4}{L^2 N^2 }}
       ,\frac{\alpha^2  h^3}{L N^2 }
       \bigg\}\right\}.
\end{equation}
To see this, it suffices to observe that $\max\{a,b\}\sim a+b$ for any $a,b\ge0$.

In order to gain some understanding in these many regimes we sort them in increasing values of $\alpha$
(see also Figure~\ref{fig:energy_moment} below). For definiteness let us assume that the first linear term is the relevant  one, in the sense that 
$\gamma^{1/2}h^{3/2}N^{-1/2}\ge h^4L^{-2}N^{-2}$, so that the last term in the maximum can be ignored. 
The condition is equivalent to $h^5\le \gamma L^4 N^3$.
A simple computation shows that the maximum is then equal to
\begin{equation}
\begin{cases}
\alpha^{1/3} \gamma^{2/3}h^{4/3}, &\text{ if }
\alpha< \gamma^{1/4}N^{3/4}/h^{1/4},\\
\frac{\alpha \gamma^{1/2}h^{3/2}}{N^{1/2}},&\text{ if }
\gamma^{1/4}N^{3/4}/h^{1/4}\le\alpha\le  
\gamma^{1/2}LN^{3/2} /h^{3/2},\\
\frac{\alpha^2  h^3}{L N^2 },&\text{ if }
\alpha> \gamma^{1/2} L N^{3/2} /h^{3/2}.
\end{cases}
\end{equation}
It remains to take the minimum between this expression and $\alpha^2h^3/L$. This is always larger than the third expression, and one easily computes the points where this intersects the other two. There are two cases. If 
$\gamma^{2/5}L^{3/5}/h< \gamma^{1/4}N^{3/4}/h^{1/4}$ (which is the same as $\gamma L^4< h^5N^5$) then we obtain
\begin{equation}\label{eqecasessortedinallpha}
\frac1c\energy_h[u]\le 
\begin{cases}
\frac{\alpha^2h^3}{L}, & \text{ if } \alpha<\gamma^{2/5}L^{3/5}/h,\\
\alpha^{1/3} \gamma^{2/3}h^{4/3}, &\text{ if }
\gamma^{2/5}L^{3/5}/h\le\alpha< \gamma^{1/4}N^{3/4}/h^{1/4},\\
\frac{\alpha \gamma^{1/2} h^{3/2}}{N^{1/2}},&\text{ if }
\gamma^{1/4}N^{3/4}/h^{1/4}\le\alpha< \gamma^{1/2} L N^{3/2} /h^{3/2},\\
\frac{\alpha^2  h^3}{L N^2 },&\text{ if }
\alpha\ge \gamma^{1/2}L N^{3/2} /h^{3/2}.
\end{cases}
\end{equation}
whereas for 
$\gamma^{2/5}L^{3/5}/h\ge  \gamma^{1/4}N^{3/4}/h^{1/4}$ the $\alpha^{1/3}$ regime disappears and
\begin{equation}
\frac1c\energy_h[u]\le 
\begin{cases}
\frac{\alpha^2h^3}{L}, & \text{ if } \alpha<\gamma^{1/2}LN^{-1/2}/ h^{3/2},\\
\frac{\alpha \gamma^{1/2}h^{3/2}}{N^{1/2}},&\text{ if }
\gamma^{1/2}LN^{-1/2}/h^{3/2}\le\alpha< \gamma^{1/2} LN^{3/2} /h^{3/2},\\
\frac{\alpha^2  h^3}{L N^2 },&\text{ if }
\alpha\ge \gamma^{1/2}L N^{3/2}  /h^{3/2}.
\end{cases}
\end{equation}
\end{remark}

\begin{remark}\label{remarkformell}
 It is interesting to compute the optimal delaminated length $\ell$ in the different regimes. Collecting the results from the above computations we obtain (in the setting of \eqref{eqecasessortedinallpha} and with $h\ge\gamma N^3$) the scaling
 \begin{equation}\label{eqscalingell}
 \ell \approx 
\begin{cases}
0 & \text{ if } \alpha<\gamma^{2/5}L^{3/5}/h,\\
\alpha^{1/3}\gamma^{-1/3}h^{4/3} N^{-1} &\text{ if } \gamma^{2/5}L^{3/5}/h\le\alpha< \gamma^{1/4}h^{-1/4}N^{3/4},\\
\alpha \gamma^{-1/2} h^{3/2}N^{-3/2}&\text{ if }
\gamma^{1/4}h^{-1/4}N^{3/4}\le\alpha<  \gamma^{1/2}h^{-3/2} L N^{3/2},\\
L ,&\text{ if }
\alpha\ge \gamma^{1/2}h^{-3/2} L N^{3/2} .
\end{cases}
\end{equation}
 \end{remark}

\section{Lower bound} \label{sec:lower}

\begin{theorem}\label{theolowerbound}
Let $\alpha\in(0,\frac\pi2]$, $h\le \frac14 L$. 
Assume that $u\in SBV^2_N(\omega_h;\R^2)$ obeys the boundary condition \eqref{eqboundarysinglefold2d} and has
jump set contained in the product of an interval and a finite set, 
\begin{equation}\label{eqJuintervaltheo}
  J_u\subseteq (x_-,x_+)\times\{b_1, \dots b_{n-1}\}
 \end{equation}
(up to $\calH^1$-null sets)
 for some $n\ge 1$ and $x_-,x_+$ with $-\frac L2\le x_-\le x_+\le \frac L2$.
Assume additionally
\begin{equation}\label{eqcalhjcjn}
    \calH^1(J_u) \ge c_J (n-1)(x_+-x_-)
\end{equation}
for some $c_J>0$.
Then
\begin{equation}
\mathcal \energy_h[u]\ge
  c\min\left\{\frac{\alpha^2h^3}{L},
 \frac{\alpha \gamma^{1/2} h^{3/2} }{N^{1/2}}+ \frac{\alpha^2h^3}{LN^2},
\frac{\alpha^2h^2}{N}\right\},
\end{equation}
where $c$ may depend on $c_J$.
\end{theorem}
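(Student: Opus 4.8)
The plan is to obtain the lower bound by combining a rigidity argument on each delaminated layer with the geometric constraint that the total jump set is comparable to $(n-1)(x_+-x_-)$, and then optimizing over the two free parameters $n$ and $w:=x_+-x_-$. First I would split the reference domain $\omega_h$ into two regions: the ``hinge region'' $(x_-,x_+)\times(0,h)$, where delamination is allowed, and the two ``arm'' regions $x_1<x_-$ and $x_1>x_+$, where $u$ is in $SBV$ with no jumps in $x_2$-direction inside $(b_j,b_{j+1})$, hence (on connected components) in $W^{1,2}$. On each horizontal strip $S_j:=(-\tfrac L2,\tfrac L2)\times(b_j,b_{j+1})$ minus the jump segments, $u$ restricted to the two arms is $W^{1,2}$; by the rigidity estimate of Friesecke--James--M\"uller applied on each arm (a rectangle of height $h_j\le 2h/n$ and length $\ge L/4$), $\nabla u$ is $L^2$-close to a single rotation $Q_j^\pm\in\SO(2)$ on each side, with $\|\nabla u - Q_j^\pm\|_{L^2}^2 \lesssim c\int_{S_j}\Wdd(\nabla u)$. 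The boundary conditions \eqref{eqboundarysinglefold2d} force $Q_j^- $ and $Q_j^+$ to be $R_{\alpha}$ and $R_{-\alpha}$ up to an error controlled by the elastic energy in that strip; so across the hinge region the deformation of strip $S_j$ must ``turn'' by a total angle $\simeq 2\alpha$.

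The second ingredient quantifies the cost of this turning over a short horizontal length. Within strip $S_j$ restricted to the hinge window $(x_-,x_+)$, the map is again piecewise $W^{1,2}$ (cut along the finitely many jump segments $\{x_2=b_i\}$, but inside a single strip there are none), so it is $W^{1,2}$ on $(x_-,x_+)\times(b_j,b_{j+1})$ up to the contribution of jumps on its \emph{boundary} lines $x_2=b_j,b_{j+1}$, whose total length is at most $\calH^1(J_u)$. Here I would use the standard ``geometric rigidity with an angle defect costs curvature energy'' lower bound: a $W^{1,2}$ deformation of a thin rectangle of thickness $h_j$ and length $w$ whose average gradient rotates by angle $\simeq\alpha$ between the two ends has elastic energy $\gtrsim \alpha^2 h_j^3/w$ (this is the matching lower bound to Lemma~\ref{lemmacontinuousbending}, via a one-dimensional reduction / Poincar\'e inequality in $x_2$ after subtracting the affine-in-$x_2$ part; see \cite{FrieseckeJamesMueller2002,Lecumberry.2009}). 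Summing over $j=0,\dots,n-1$ and using $\sum h_j=h$ together with convexity ($\sum h_j^3\ge h^3/n^2$ when $h_j\simeq h/n$; in general one keeps $\sum_j \alpha^2 h_j^3/w \ge \alpha^2 h^3/(n^2 w)$ after Jensen, using that at most $n$ layers are present), one gets elastic energy $\gtrsim \alpha^2 h^3/(n^2 w)$ from the hinge plus, from the arms, the usual $\alpha^2 h^3/L$ scaling unless the arms are themselves allowed to bend, in which case the bound in the arms degrades to $\alpha^2 h^3/(L N^2)$ — this is where the $\frac{\alpha^2 h^3}{LN^2}$ term in the statement comes from. Meanwhile the delamination energy is $\gamma\,\calH^1(J_u)\ge c_J\gamma (n-1) w$.

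So the whole lower bound reduces to minimizing, over $n\in\{1,\dots,N\}$ and $w\in[0,L]$,
\begin{equation}
\energy_h[u]\ \gtrsim\ \min\Bigl\{\frac{\alpha^2h^3}{L},\ \ \gamma (n-1) w\ +\ \frac{\alpha^2 h^3}{n^2 w}\ +\ \frac{\alpha^2 h^3}{L N^2}\Bigr\}.
\end{equation}
For fixed $n$, minimizing $\gamma (n-1)w + \alpha^2 h^3/(n^2 w)$ over $w$ gives (at the balance point $w\simeq \alpha h^{3/2}\gamma^{-1/2} n^{-3/2}$, when admissible) a contribution $\simeq \alpha \gamma^{1/2} h^{3/2} n^{-1/2}$, which is decreasing in $n$, so $n=N$ is optimal and one recovers $\alpha\gamma^{1/2}h^{3/2} N^{-1/2}$; if the balance point exceeds $L$ one takes $w=L$, getting $\alpha^2 h^3/(n^2 L)\ge \alpha^2 h^3/(N^2 L)$, already present. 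The remaining possibility is that even the cheapest delaminated configuration is still forced, by the constraint $w\le L$ and $n\le N$, to pay at least $\alpha^2 h^2/N$: this is the regime where the hinge cannot be made long enough, and it enters the minimum as the third term $\frac{\alpha^2 h^2}{N}$. I expect the main obstacle to be the \emph{rigidity bookkeeping across the arms and the hinge simultaneously} — i.e.\ making rigorous that the angle defect $2\alpha$ really must be ``spent'' inside the window $(x_-,x_+)$ and not leak into the arms for free, and controlling the error matrices $Q_j^\pm$ so that the per-strip bounds add up without losing powers of $n$. The clean way is the one-dimensional slicing/averaging in $x_2$ used in \cite{FrieseckeJamesMueller2002,Lecumberry.2009}: integrate the rigidity estimate over horizontal fibers, extract from each fiber a rotation field $R(x_1)$, show $\|R' \|$ controls $\int\Wdd$, and note $\int_{x_-}^{x_+}|R'|\ge$ (angle turned) $\simeq \alpha$; Jensen in $x_1$ then yields $\int |R'|^2\gtrsim \alpha^2/w$ per unit height, and the $x_2$-integration produces the $h_j^3$ via the second-moment, i.e.\ the distance of $u$ from a pure rotation of the fiber. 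Carrying the constant $c_J$ through the final scalar optimization is routine once the three building blocks — arm rigidity, hinge curvature cost, and $\calH^1(J_u)\ge c_J(n-1)w$ — are in place.
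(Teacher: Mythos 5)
Your overall strategy is the same as the paper's (rigidity in the undelaminated arms, per-layer rigidity across the hinge window, then a scalar optimization against $\gamma\,\calH^1(J_u)\ge c_J\gamma(n-1)w$), but the central quantitative step is asserted in a form that is too strong, and as a consequence you never actually produce the third alternative $\alpha^2h^2/N$ of the statement. Your per-layer hinge bound, ``an angle defect $\simeq\alpha$ over a window of length $w$ costs $\gtrsim \alpha^2 h_j^3/w$'', cannot hold uniformly down to small $w$: when $w\lesssim h_j$ nothing forces the turning to be localized in the window, and what the chaining argument really gives (squares of side $h_j$, $K_j\simeq 1+w/h_j$ of them, Cauchy--Schwarz along the chain, with the end rotations linked to the arm rotations through overlaps) is $\gtrsim \alpha^2 h_j^3/(w+h_j)$, which saturates at $\alpha^2h_j^2$ for short windows. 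This saturation is exactly where the term $\alpha^2h^2/N$ in the theorem comes from; it corresponds to a \emph{very short} delamination length, not, as you write, to ``the hinge cannot be made long enough'' (the constraints $w\le L$, $n\le N$ have already been used to produce $\alpha^2h^3/(LN^2)$ and $\alpha\gamma^{1/2}h^{3/2}N^{-1/2}$, and nothing in your reduced functional $\gamma(n-1)w+\alpha^2h^3/(n^2w)$ generates $\alpha^2h^2/N$). With the corrected hinge estimate the reduced functional is $\gamma(n-1)w+\alpha^2h^3/\bigl(n^2(w+h/n)\bigr)$, and splitting $1/(a+b)\ge\min\{1/(2a),1/(2b)\}$ yields precisely the three alternatives of the theorem; this repair is routine but it is the step you are missing.

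A second genuine gap is the treatment of the arms. Since $x_\pm\in[-\tfrac L2,\tfrac L2]$, the jump set does not reach the arms, so $u$ is $W^{1,2}$ on each full-thickness arm; chaining $h\times 2h$ squares there gives the dichotomy: either one of the rotations adjacent to the window deviates from $R_{\pm\alpha}$ by $\gtrsim\alpha$, in which case $\energy_h[u]\gtrsim \alpha^2h^3/L$ and one is done, or essentially the whole angle must be turned inside the window. Your strip-by-strip treatment of the arms (height $h_j$, length $\sim L$) only yields $\alpha^2h_j^3/L$ per layer, i.e.\ $\alpha^2h^3/(n^2L)$ in total, which does not dominate any of the three alternatives (for instance, when $\gamma$ is large the minimum is $\alpha^2h^3/L$), so the case in which the angle ``leaks'' into the arms is not closed by your argument. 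Relatedly, the $\alpha^2h^3/(LN^2)$ summand does not come from ``arms being allowed to bend'' --- under the hypotheses they cannot delaminate --- but from the hinge term with $w\le L$, $n\le N$, as your own $w=L$ case in fact shows. Finally, note that the Friesecke--James--M\"uller constant on thin rectangles degrades with the aspect ratio, so both the arm and the window estimates must be run through the chaining you only sketch at the end; that bookkeeping ($K\simeq L/h$ squares in the arms, $K_j\simeq 1+w/h_j$ in the window) is what produces both the $L/h^3$ prefactor and the saturation discussed above.
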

As in the construction of the upper bound, $n\ge1$ denotes the number of layers, $n-1\ge0$ the number of interfaces.
\begin{remark}
We note that the all regimes obtained here, except for the last one, are matched by upper bounds. In the proof below, we will see that this last regime corresponds to a very short delamination length. In the upper bound, this is forbidden by the injectivity constraint $\larc \ge \frac{2\beta h}{n}$, which however only arises from our specific construction. Similarly, the upper bounds $\energy_h[u] \le c\alpha^{1/3}\gamma^{2/3}h^{4/3}$, arising  due to the above constraint $\larc \ge \frac{2\beta h}{n}$, and $\energy_h[u] \le c\alpha h^4 L^{-2}N^{-2}$, arising due to the constraint $\zeta \le L/4$ in our construction, are not matched by a lower bound.
\end{remark}

\begin{lemma}\label{lemmalowerbnofracture}
Let $\alpha\in(0,\frac\pi2]$, $h\le \frac14 L$. 
Assume that $u\in W^{1,2}(\omega_h;\R^2)$ obeys the boundary condition \eqref{eqboundarysinglefold2d}. Then 
\begin{equation}
 \energy_h[u]\ge c\frac{\alpha^2h^3}{L}.
\end{equation} 
\end{lemma}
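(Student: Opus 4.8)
The plan is to reduce the two‑dimensional lower bound for the hinge energy to the classical rigidity‑based lower bound for plate bending, which is the content of \cite{FrieseckeJamesMueller2002, Lecumberry.2009}. Since $u\in W^{1,2}(\omega_h;\R^2)$ has no jump set, the delamination term vanishes and $\energy_h[u]=\int_{\omega_h}\Wdd(\nabla u)\,dx$. By the lower bound in \eqref{eqW2dbounds} it suffices to bound $\int_{\omega_h}\dist^2(\nabla u,\SO(2))\,dx$ from below by $c\alpha^2h^3/L$.

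First I would localize in the $x_1$ direction. The boundary conditions \eqref{eqboundarysinglefold2d} fix $\nabla u=R_\alpha$ on $(-L,-L/2)\times(0,h)$ and $\nabla u=R_{-\alpha}$ on $(L/2,L)\times(0,h)$; these two rotations differ by a fixed amount ($|R_\alpha-R_{-\alpha}|\sim|\sin\alpha|\gtrsim\alpha$ for $\alpha\in(0,\pi/2]$). The strategy is to partition the strip $(-L/2,L/2)\times(0,h)$ into $\sim L/h$ squares of side $h$ and apply the geometric rigidity estimate of Friesecke–James–Müller on each square $Q_i$: there is a rotation $R_i\in\SO(2)$ with $\int_{Q_i}|\nabla u-R_i|^2\,dx\le C\int_{Q_i}\dist^2(\nabla u,\SO(2))\,dx$. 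On the two extremal squares the rotation can be taken to be $R_{\pm\alpha}$ (up to an additive error controlled by the same integral). Then I would estimate $\sum_i|R_i-R_{i+1}|^2$ from below: by the triangle inequality $|R_\alpha-R_{-\alpha}|\le\sum_i|R_i-R_{i+1}|$, and there are $\sim L/h$ terms, so by Cauchy–Schwarz $\sum_i|R_i-R_{i+1}|^2\gtrsim \alpha^2 h/L$. Each difference $|R_i-R_{i+1}|^2$ is in turn controlled by $\frac1{h^2}\int_{Q_i\cup Q_{i+1}}\dist^2(\nabla u,\SO(2))\,dx$ (again via rigidity, comparing the two rotations on the shared boundary). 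Summing over $i$ gives $\int_{\omega_h}\dist^2(\nabla u,\SO(2))\,dx\gtrsim h^2\cdot\alpha^2h/L=\alpha^2h^3/L$, as desired. Multiplying by $1/c$ from \eqref{eqW2dbounds} finishes the argument.

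The main obstacle is the careful bookkeeping needed to pass from the local rotations $R_i$ on individual squares to a genuine telescoping chain connecting $R_\alpha$ to $R_{-\alpha}$ with the correct power of $h$: one must control $|R_i-R_{i+1}|$ by the Dirichlet-type energy on the overlap of adjacent squares (not just on each square separately), which requires either a rigidity estimate on the union $Q_i\cup Q_{i+1}$ or a trace/Poincaré argument on the shared edge. A clean way to organize this is to work with slightly overlapping rectangles of width $2h$ and height $h$, apply rigidity on each, and note that consecutive rectangles share a region of area $\sim h^2$ on which the two approximating rotations must be close in $L^2$ — hence pointwise, since rotations form a compact manifold and $|R_i - R_{i+1}|^2 \le \frac{C}{h^2}\big(\int_{Q_i}|\nabla u - R_i|^2 + \int_{Q_{i+1}}|\nabla u - R_{i+1}|^2\big)$. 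A harmless technical point is that $h\le L/4$ guarantees at least a fixed number of squares between the two boundary zones, so the chain is nontrivial; I would spell this out to make the constant $c$ explicit in its dependence only on the constant in \eqref{eqW2dbounds} and the rigidity constant. Since all of this is standard thin-plate theory, I expect the proof to be short, essentially a citation of \cite{FrieseckeJamesMueller2002} combined with the elementary telescoping estimate above.
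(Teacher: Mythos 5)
Your argument is correct and is essentially the paper's own proof: geometric rigidity of Friesecke--James--M\"uller applied on a chain of overlapping $2h\times h$ rectangles, with $|R_i-R_{i+1}|^2$ controlled via the overlap of area $\sim h^2$, then a telescoping triangle inequality plus Cauchy--Schwarz over the $\sim L/h$ links and the boundary identification of the extremal rotations with $R_{\pm\alpha}$, giving $\int_{\omega_h}\dist^2(\nabla u,\SO(2))\,dx\ge c\,\alpha^2h^3/L$. The bookkeeping you flag (controlling consecutive rotation differences by the energy on the union of adjacent rectangles) is handled in the paper exactly as in your refined version with overlapping rectangles, so nothing further is needed.
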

This result follows easily from the compactness result for plates in \cite{FrieseckeJamesMueller2002}; we present here the short argument since it will be reused in the following.
\begin{proof}
We consider the rectangles $Q_k:=(hk, h(k+2))\times (0,h)$, for 
 $k=0, \dots, K:=\lfloor \frac{L}{h}-2\rfloor$.
 By \eqref{eqW2dbounds} and the Friesecke-James-Müller rigidity estimate \cite{FrieseckeJamesMueller2002}, there is a universal constant $c>0$ such that for every $k$ there is $R^k\in\SO(2)$ with
 \begin{equation}
  \int_{Q_k}|Du-R^k|^2 dx \le c 
  \int_{Q_k}\dist^2(Du,\SO(2))dx\le c
  \int_{Q_k}\Wdd(Du) dx.
 \end{equation}
 By \eqref{eqboundarysinglefold2d}, we can assume $R^0=R_{\alpha}$ and  $R^K=R_{-\alpha}$. 
With a triangular inequality and $\calL^2(Q_k\cap Q_{k+1})=h^2$ we obtain
 \begin{equation}
  h^2 |R^k-R^{k+1}|^2\le 2 \int_{Q_k}|Du-R^k|^2 dx +
    2 \int_{Q_{k+1}}|Du-R^{k+1}|^2 dx .
 \end{equation}
By a triangular inequality and Hölder's inequality,
\begin{equation}
 |R^0-R^K|^2\le \left(\sum_{k=0}^{K-1} |R^k-R^{k+1}|\right)^2 \le K \sum_{k=0}^{K-1} |R^k-R^{k+1}|^2.
\end{equation}
Combining the previous estimates and
$K\le L/h$,
\begin{equation}
|R^0-R^K|^2 \le c \frac{K}{h^2} \int_{\omega_h} \Wdd(Du) dx 
\le c\frac{L}{h^3} \int_{\omega_h} \Wdd(Du) dx .
\end{equation}
With $|R_\alpha-R_{-\alpha}|\ge 2\sin\alpha\ge \alpha$ the proof is concluded.
\end{proof}

\begin{lemma}\label{lemmalowerbound2}
Let $\alpha\in(0,\frac\pi2]$, $h\le \frac14 L$. 
Assume that $u\in SBV^2_N(\omega_h;\R^2)$ obeys the boundary condition \eqref{eqboundarysinglefold2d} and has
jump set contained in the product of an interval and a finite set, 
\begin{equation}\label{eqJuinterval}
  J_u\subseteq (x_-,x_+)\times\{b_1, \dots b_{n-1}\}
 \end{equation}
 for some $n\ge 1$ and $x_-,x_+$ with $-\frac L2\le x_-< x_+\le \frac L2$.
Then, with $\ell:=x_+-x_-$,
\begin{equation}\label{eqlowerbsecresd}
 \int_{\omega_h} \Wdd(Du)
dx
 \ge c\min\left\{\frac{\alpha^2h^3}{L},
\frac{\alpha^2h^3}{n^2(\ell+h/n)}\right\}.
\end{equation}
\end{lemma}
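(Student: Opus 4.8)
The plan is to adapt the rigidity-based slicing argument from Lemma~\ref{lemmalowerbnofracture} to the fractured setting, controlling how much the layers can rotate relative to each other across the delaminated region. The key new feature is that on the interval $(x_-,x_+)$ the function may jump on up to $n-1$ horizontal lines, so the domain $\omega_h$ is cut into at most $n$ horizontal strips there, and inside each strip we may apply the Friesecke--James--Müller rigidity estimate on rectangles of the appropriate aspect ratio. Outside $(x_-,x_+)$ the function is in $W^{1,2}$ with no jumps, so the argument of Lemma~\ref{lemmalowerbnofracture} applies verbatim on each side.

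\medskip

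\noindent\textbf{Step 1 (splitting off the unfractured parts).} Write $(-L,L)=(-L,x_-)\cup(x_-,x_+)\cup(x_+,L)$. On the two outer pieces $u$ is $W^{1,2}$ and one endpoint carries a prescribed rotation ($R_\alpha$ on the left, $R_{-\alpha}$ on the right). Running the chain-of-rectangles argument of Lemma~\ref{lemmalowerbnofracture} on $(-L,x_-)\times(0,h)$, we obtain a rotation $R^-$ "at $x_-$" with $|R_\alpha-R^-|^2\le c\frac{L}{h^3}\int \Wdd(Du)\,dx$, and similarly a rotation $R^+$ near $x_+$ with $|R^+-R_{-\alpha}|^2\le c\frac{L}{h^3}\int \Wdd(Du)\,dx$. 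Here one must be slightly careful that the intervals $(-L,x_-)$ and $(x_+,L)$ have length at least $L/2$ (using $x_\pm\in[-L/2,L/2]$), so the constants are uniform. If either outer energy already exceeds a fixed multiple of $\frac{\alpha^2 h^3}{L}$ we are done, so we may assume $|R_\alpha - R^-|$ and $|R^+ - R_{-\alpha}|$ are both small, say at most a small absolute constant times $\alpha$.

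\medskip

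\noindent\textbf{Step 2 (the fractured middle, layer by layer).} It remains to bound the rotation defect across $(x_-,x_+)$, i.e.\ to relate $R^-$ and $R^+$. Since the jump set lies on finitely many horizontal lines, $(x_-,x_+)\times(0,h)$ is partitioned into $n$ strips $(x_-,x_+)\times(b_{i-1},b_i)$, each of height $h_i\le$ something, and on each strip $u\in W^{1,2}$ with no interior jumps. We want to transport a rotation along the horizontal direction within each strip. Cover $(x_-,x_+)$ by rectangles of width comparable to $\max\{h_i,\ \ell/M\}$ for a suitable number $M\sim \ell/h_i + 1$ of overlapping rectangles of the right aspect ratio (so FJM applies with a universal constant); chaining them as before, each strip yields a rotation $R_i$ (a rotation consistent with the strip's values near $x_-$ and near $x_+$) with
\begin{equation}
|R_i^- - R_i^+|^2 \le c\,\frac{\ell + h_i}{h_i^3}\int_{(x_-,x_+)\times(b_{i-1},b_i)}\Wdd(Du)\,dx .
\end{equation}
Because the map has a weak gradient vertically as well, within the bottom and top strips $R_i^-$ must be close to $R^-$ resp.\ $R^+$ (comparing on the overlapping rectangle straddling $x_-$ resp.\ $x_+$), with the same kind of estimate. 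The crucial point is that across an \emph{interface} $b_i$ no information is lost in the \emph{horizontal} transport: the jump is permitted only in the value of $u$, not of its tangential derivative along horizontal lines in a controlled way — actually this is exactly the subtlety.

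\medskip

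\noindent\textbf{Step 3 (combining).} The rotations $R^-$, $R^+$ differ from each other by the total "deficit" accumulated across the $n$ strips. Using $|R_\alpha - R_{-\alpha}|\ge \alpha$ and the two outer estimates, we need $\sum_i |R_i^- - R_i^+|$ (plus the outer defects) to be bounded below by (a constant times) $\alpha$. If it is not, then one of the outer energies is large, giving the $\frac{\alpha^2 h^3}{L}$ term. If it is, then by Cauchy--Schwarz (over the $n$ strips) and the per-strip estimates,
\begin{equation}
\alpha^2 \le c\, n \sum_{i=1}^n |R_i^--R_i^+|^2 \le c\, n \max_i \frac{\ell+h_i}{h_i^3}\int_{\omega_h}\Wdd(Du)\,dx .
\end{equation}
Since $h_i\ge$ (roughly) $h/N$ and in the balanced case $h_i\sim h/n$, one gets $\frac{\ell+h/n}{(h/n)^3}\sim \frac{n^3(\ell+h/n)}{h^3}$, hence $\int \Wdd(Du)\,dx\ge c\frac{\alpha^2 h^3}{n^2(\ell+h/n)}$, which together with Step~1 gives the claimed minimum.

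\medskip

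\noindent\textbf{Main obstacle.} The delicate point is Step~2: transporting a rotation \emph{vertically} across a delamination interface. Across a jump line the value of $u$ may be arbitrary, so there is no direct comparison between $R_i$ on adjacent strips — and indeed the theorem \emph{allows} the different layers to rotate by different amounts, which is exactly why the $1/n^2$ (rather than a single rigidity bound) appears. So the argument must \emph{not} try to relate $R_i$ across interfaces; instead each strip independently transports its own rotation \emph{horizontally} from the region $x<x_-$ (where $u$ is unfractured and $Du\approx R^-$ on \emph{all} of $(0,h)$, hence on the vertical extension of each strip) to $x>x_+$ (where $Du\approx R^+$). The subtlety is ensuring the overlap rectangles at $x_-$ and $x_+$, which \emph{do} cross interfaces, still lie in the unfractured region — this is true precisely because $J_u$ is confined to $(x_-,x_+)$, so slightly to the left of $x_-$ the full-height rectangle is jump-free and FJM yields a single rotation $R^-$ valid for the whole height, which then seeds every strip. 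Getting the bookkeeping of rectangle widths and aspect ratios right (so that the constant in FJM stays universal, whether $\ell\gg h_i$ or $\ell\ll h_i$) is the routine-but-careful part; the conceptual content is the realization that horizontal transport within strips, seeded from both unfractured ends, is the right mechanism and that it naturally produces the factor $n^2(\ell+h/n)$ in the denominator.
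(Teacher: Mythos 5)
Your architecture is the same as the paper's: rigidity chains on the two unfractured outer pieces produce full-height rotations $R^-$, $R^+$ near $x_\pm$; then each of the $n$ strips in $(x_-,x_+)$ transports a rotation horizontally, seeded at both ends by $R^-$ and $R^+$ through overlap rectangles lying in the jump-free region. Your "Main obstacle" paragraph correctly identifies that one must not compare rotations vertically across interfaces and that every strip independently bridges the full gap $|R^--R^+|\gtrsim\alpha$ (your earlier remark that only the bottom and top strips connect to $R^-$ resp.\ $R^+$ is off, but you repair it there). The genuine gap is in Step~3, the combination. First, the premise "$R^-$ and $R^+$ differ by the total deficit accumulated across the $n$ strips", followed by Cauchy--Schwarz on $\alpha\le c\sum_i|R_i^--R_i^+|$, reflects a serial picture that is not the actual geometry: the strips act in parallel, each one separately satisfying $\alpha\lesssim |R^--R_i^-|+|R_i^--R_i^+|+|R_i^+-R^+|$. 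Second, and decisively, the bound you derive, $\alpha^2\le c\,n\max_i\frac{\ell+h_i}{h_i^3}\int\Wdd(Du)\,dx$, is governed by the \emph{thinnest} strip. Since the partition is arbitrary subject only to $b_i\in\frac hN\Z$, a strip may have $h_i=h/N$ with $N\gg n$, and your estimate then only gives $\int \Wdd(Du)\,dx\gtrsim \frac{\alpha^2h^3}{n N^3(\ell+h/N)}$, which does not imply \eqref{eqlowerbsecresd}. The appeal to "the balanced case $h_i\sim h/n$" is not available: the lemma must hold for unbalanced partitions.

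The fix (the paper's route) is a weighted combination rather than Cauchy--Schwarz: multiply each strip's bridging inequality by $h_j^2$, so that it reads
\begin{equation*}
h_j^2|R^--R^+|^2\le c\Big(\int_{Q^j_0\cup Q^j_{K_j}}\Wdd(Du)\,dx+\int_{Q^j_0\cap Q^-_0}|Du-R^-|^2dx+\int_{Q^j_{K_j}\cap Q^+_0}|Du-R^+|^2dx+K_j\int_{\text{strip }j}\Wdd(Du)\,dx\Big),
\end{equation*}
and sum only over the set $A$ of "thick" layers with $h_j\ge \frac h{2n}$, for which $K_j\lesssim (h+n\ell)/h$ and $\sum_{j\in A}h_j^2\ge \frac{h^2}{4n}$. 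Because the strips (and the overlap rectangles inside $Q_0^\pm$) are pairwise disjoint, the right-hand sides add up to at most a constant times the total energy, while the left-hand side is at least $\frac{h^2}{4n}\cdot\frac{\alpha^2}{4}$; this is exactly what produces the factor $n^2(\ell+h/n)$. Note also that the seed-comparison terms $|R^--R_i^-|$, $|R_i^+-R^+|$ are not the "outer energies" you dismissed in Step~1: they come with the prefactor $h_i^{-2}$ and must be carried through this weighted sum (where the weight $h_j^2$ cancels the prefactor), not assumed small. Even taking only the single thickest strip instead of summing over $A$ loses a factor $n$, so the summation over all thick strips is essential.
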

\begin{proof}
By \eqref{eqJuinterval}, $u\in W^{1,2}((-L,x_-)\times(0,h))$. This set can be treated as 
 in Lemma \ref{lemmalowerbnofracture}. 
We consider the rectangles
$Q^-_k:=(x_--(k+2)h, x_--kh)\times(0,h)$, $k=0, \dots, K^-:=\lfloor
\frac{x_-+L}{h}-2\rfloor$,
and obtain rotations $R^k_-\in \SO(2)$ such that $R_-^{K^-}=R_{\alpha}$,
\begin{equation}
 |R_-^0-R_\alpha|^2\le K^-\sum_{k=0}^{K^--1} |R_-^k-R_-^{k+1}|^2\le c\frac{L}{h^3} \int_{\omega_h} \Wdd(Du) dx.
\end{equation}
Analogously, \eqref{eqJuinterval} also gives $u\in W^{1,2}((x_+,L)\times(0,h))$. 
The same computation, using the sets
$Q^+_k:=(x_++kh, x_++(k+2)h)\times(0,h)$, leads to
\begin{equation}\label{eqrp0r0}
 |R_+^0-R_{-\alpha}|^2\le K^+\sum_{k=0}^{K^+-1} |R_+^k-R_+^{k+1}|^2\le c\frac{L}{h^3} \int_{\omega_h} \Wdd(Du) dx .
\end{equation}
By a triangular inequality, 
\begin{equation}
 |R_+^0-R_{-\alpha}|+
 |R_+^0-R_-^0|+
 |R_-^0-R_{\alpha}|\ge
 |R_\alpha-R_{-\alpha}|\ge 2\sin\alpha\ge \alpha.
\end{equation}
If $\max\{|R^0_+-R_{-\alpha}|, 
|R^0_--R_{\alpha}|\}\ge \frac 14\alpha$, then 
\eqref{eqlowerbsecresd} holds and the proof is concluded. Therefore for the rest of the proof we can assume
\begin{equation}\label{eqr0mr0p}
|R^0_--R^0_+|\ge \frac 12\alpha,
\end{equation}
and we only need to deal with the central part of the domain.
As in the proof of the upper bound we set $b_0:=0$, $b_{n+1}:=h$, and $h_j:=b_{j+1}-b_j$. 
We treat each layer $(x_-,x_+)\times (b_j, b_{j+1})$ separately,
for $j=0,\dots, n$. We consider the sets
$Q_k^j:=(x_-+(k-1)h_j, x_-+(k+1)h_j)\times(b_j, b_j+h_j)$, for 
$k=0, \dots, K_j:=\lfloor \frac{x_+-x_-}{h_j}+1\rfloor$, and obtain
rotations $R_j^k\in\SO(2)$ with
\begin{equation}\label{eqinnerchain}
 |R_j^0-R_j^{K_j}|^2\le c \frac{K_j}{h_j^2} 
 \int_{(-L,L)\times(b_j, b_{j+1})} \Wdd(Du) dx 
\end{equation}
as well as
\begin{equation}
 \int_{Q^j_0} |Du-R_j^0|^2 dx +
\int_{Q^j_{K_j}} |Du-R_j^{K_j}|^2 dx \le c 
\int_{Q^j_0\cup Q^j_{K_j}} \Wdd(Du)dx.
 \end{equation}
Since $\calL^2(Q^j_0\cap Q^-_0)\ge h_j^2$ and
$\calL^2(Q^j_{K_j}\cap Q^+_0)\ge h_j^2$, 
\begin{equation}
 h_j^2|R^0_j-R^0_-|^2 \le c \int_{Q^j_0} \Wdd(Du) dx +
 c\int_{Q^j_0\cap Q^-_0} |Du-R^0_-|^2dx,
\end{equation}
and the same on the other side. With  \eqref{eqinnerchain} we obtain
\begin{equation}\label{eqestonej}
\begin{split}
 h_j^2|R^0_--R^0_+|^2 \le &c \int_{Q^j_0\cup Q^j_{K_j}} \Wdd(Du) dx +
 c\int_{Q^j_0\cap Q^-_0} |Du-R^0_-|^2dx
 +
 c\int_{Q^j_{K_j}\cap Q^+_0} |Du-R^0_+|^2dx
 \\
& +c K_j
 \int_{(-L,L)\times(b_j, b_{j+1})} \Wdd(Du) dx .
 \end{split}
\end{equation}
Let $A:=\{j\in\{1,\dots, n-1\}: h_j\ge \frac h{2n}\}$.  Then $\sum_{j\not\in A} h_j\le \frac h2$, which implies
\begin{equation}
 \sum_{j\in A} h_j^2 \ge \frac{1}{\# A}(\sum_{j\in A} h_j)^2 \ge \frac{h^2}{4n}.
\end{equation}
We sum \eqref{eqestonej} over all $j\in A$, use that the domains of integration for different $j$ are disjoint and  that $j\in A$ implies $K_j\le 1+\ell/h_j\le 1+2n\ell/h\le 2(h+n\ell)/h$, and obtain
\begin{equation}
 \frac{h^2}{4n} 
 |R^0_--R^0_+|^2 \le
 \sum_{j\in A} h_j^2 |R^0_--R^0_+|^2 
 \le c (1+\frac{n\ell}{h})\int_{\omega_h} \Wdd(Du) dx.
\end{equation}
Recalling \eqref{eqr0mr0p}, 
\begin{equation}
 \int_{\omega_h} \Wdd(Du) dx\ge c \frac{\alpha^2 h^3}{n^2(\ell+h/n)}
\end{equation}
which concludes the proof.
\end{proof}

\begin{proof}[Proof of Theorem \ref{theolowerbound}]
If $n=1$ or $\ell=0$ then $u\in W^{1,2}(\omega_h;\R^2)$ and the assertion follows from  Lemma \ref{lemmalowerbnofracture}. Therefore we can assume $n\ge 2$, $\ell>0$. 

By \eqref{eqcalhjcjn} and Lemma \ref{lemmalowerbound2} we have
\begin{equation}
\energy_h[u]\ge c_J n\gamma \ell + 
  c\min\left\{\frac{\alpha^2h^3}{L},
\frac{\alpha^2h^3}{n^2(\ell+h/n)}\right\},
\end{equation}
where $\ell:=x_+-x_-\in(0,L]$, $n\in[1,N]$. 
As $1/(a+b)\ge\min\{1/2a,1/2b\}$ for $a,b>0$ we have, with $c':=\min\{c_J, c/2\}$,
\begin{equation}
\energy_h[u]\ge c' \left[n\gamma \ell + 
  \min\left\{\frac{\alpha^2h^3}{L},
\frac{\alpha^2h^3}{\ell n^2},
\frac{\alpha^2h^2}{n}\right\}\right],
\end{equation}
which implies
\begin{equation}\label{eqjuthreeterms}
\energy_h[u]\ge 
  c'\min\left\{\frac{\alpha^2h^3}{L},
 n\gamma \ell + \frac{\alpha^2h^3}{\ell n^2},
\frac{\alpha^2h^2}{n}\right\}.
\end{equation}
We treat the three terms separately. For the first one there is nothing to do, for the last one we simply use $n\le N$. For the middle one we use two estimates. From 
$n\le N$ and $\ell\le L$, we get
\begin{equation}
 n\gamma \ell + \frac{\alpha^2h^3}{\ell n^2}\ge 
 \frac{\alpha^2h^3}{LN^2}
\end{equation}
and  using $ab\ge2\sqrt{ab}$ first, and then $n\le N$,
\begin{equation}\label{eqlboptell}
 n\gamma \ell + \frac{\alpha^2h^3}{\ell n^2}\ge 2
  \frac{\alpha h^{3/2}\gamma^{1/2} }{n^{1/2}}\ge 2
  \frac{\alpha h^{3/2}\gamma^{1/2} }{N^{1/2}}.
\end{equation}
Therefore
\begin{equation}
 n\gamma \ell + \frac{\alpha^2h^3}{\ell n^2}\ge \max\left\{
  \frac{\alpha h^{3/2}\gamma^{1/2} }{N^{1/2}},
  \frac{\alpha^2h^3}{LN^2}\right\} \ge \frac12 
\left(  \frac{\alpha h^{3/2}\gamma^{1/2} }{N^{1/2}}+
  \frac{\alpha^2h^3}{LN^2}\right),
  \end{equation}
and inserting this bound in \eqref{eqjuthreeterms} yields
\begin{equation}
\energy_h[u]\ge 
  \frac12 c'\min\left\{\frac{\alpha^2h^3}{L},
 \frac{\alpha \gamma^{1/2}  h^{3/2}}{N^{1/2}}
 + \frac{\alpha^2h^3}{LN^2},
\frac{\alpha^2h^2}{N}\right\}
\end{equation}
which concludes the proof.
\end{proof}

\begin{remark}
We remark that the first inequality in \eqref{eqlboptell} 
corresponds to the choice $\ell:=\alpha h^{3/2}/(\gamma^{1/2}n^{3/2})$. Therefore in the regime in which the energy scales as $\frac{\alpha \gamma^{1/2} h^{3/2} }{N^{1/2}}$, 
delamination necessarily occurs over a length which is (up to a factor) equal to $\ell:=\alpha h^{3/2}/(\gamma^{1/2}n^{3/2})$. This implies localization of delemination to a small part of the sample, in agreement with the upper bounds discussed above and the experimental observations in Figure~\ref{fig:hinge_exp}.

Analogously, the lower bound shows that bending localizes. Indeed, from \eqref{eqrp0r0} and the corresponding equation we obtain
\begin{equation}
\frac{h^3}{L}\left[ |R_+^0-R_{-\alpha}|^2+
 |R_-^0-R_{\alpha}|^2\right]\le c
 \energy_h[u].
\end{equation}
We recall that $R_+^0-R_{-\alpha}$ and 
$R_-^0-R_{\alpha}$ are the differences in rotation inside the two non-delaminated parts of the sample. 
Therefore if the energy is significantly smaller than $c\alpha^2h^3/L$ the two non-delaminated parts of the sample carry a small part of the total rotation, and bending localizes to the delaminated part.
\end{remark}

\section{Discussion} \label{sec:discussion}
\begin{figure}
\begin{center}
\includegraphics[width=0.45\linewidth]{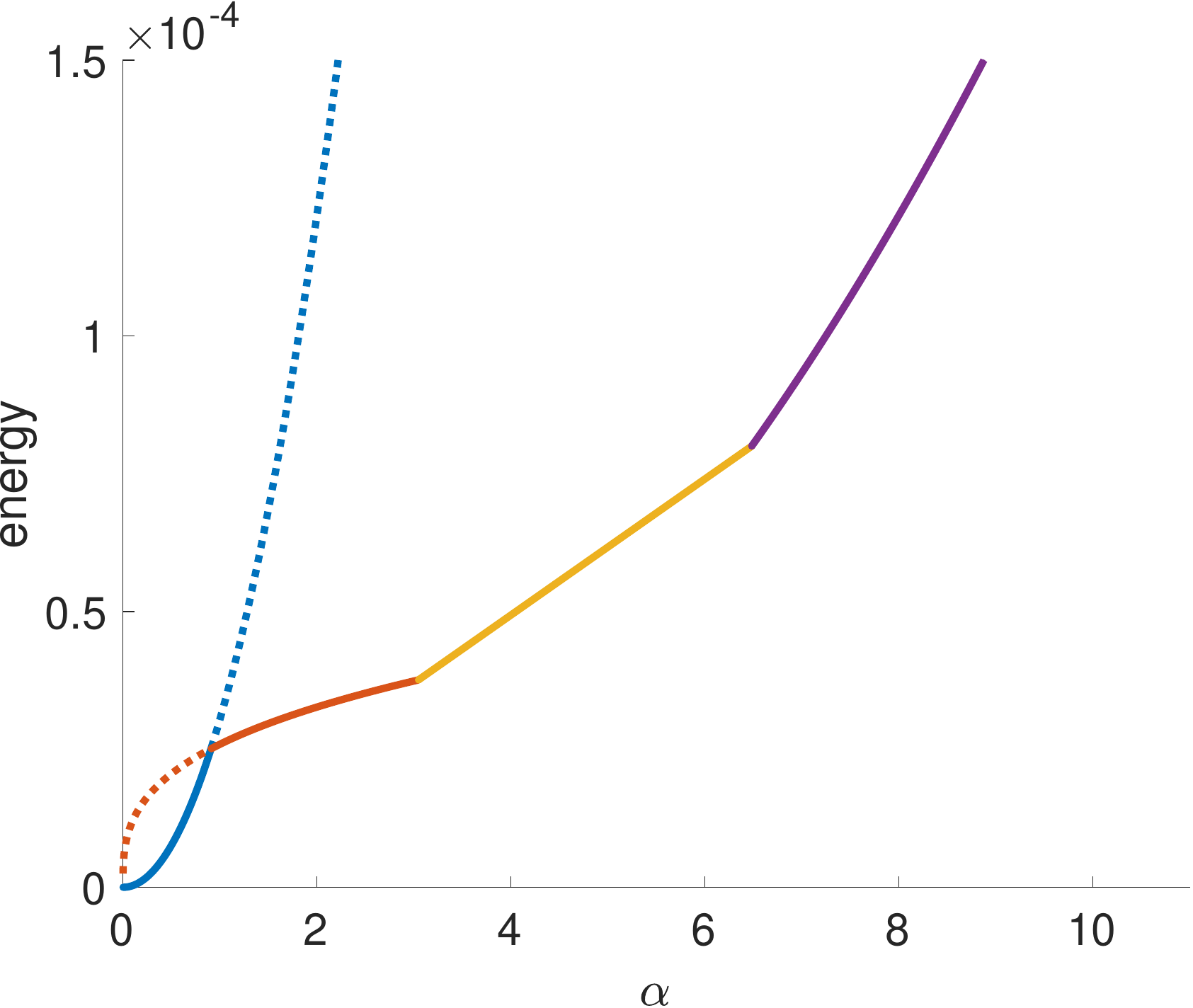}
\hskip 0.09\linewidth
\includegraphics[width=0.45\linewidth]{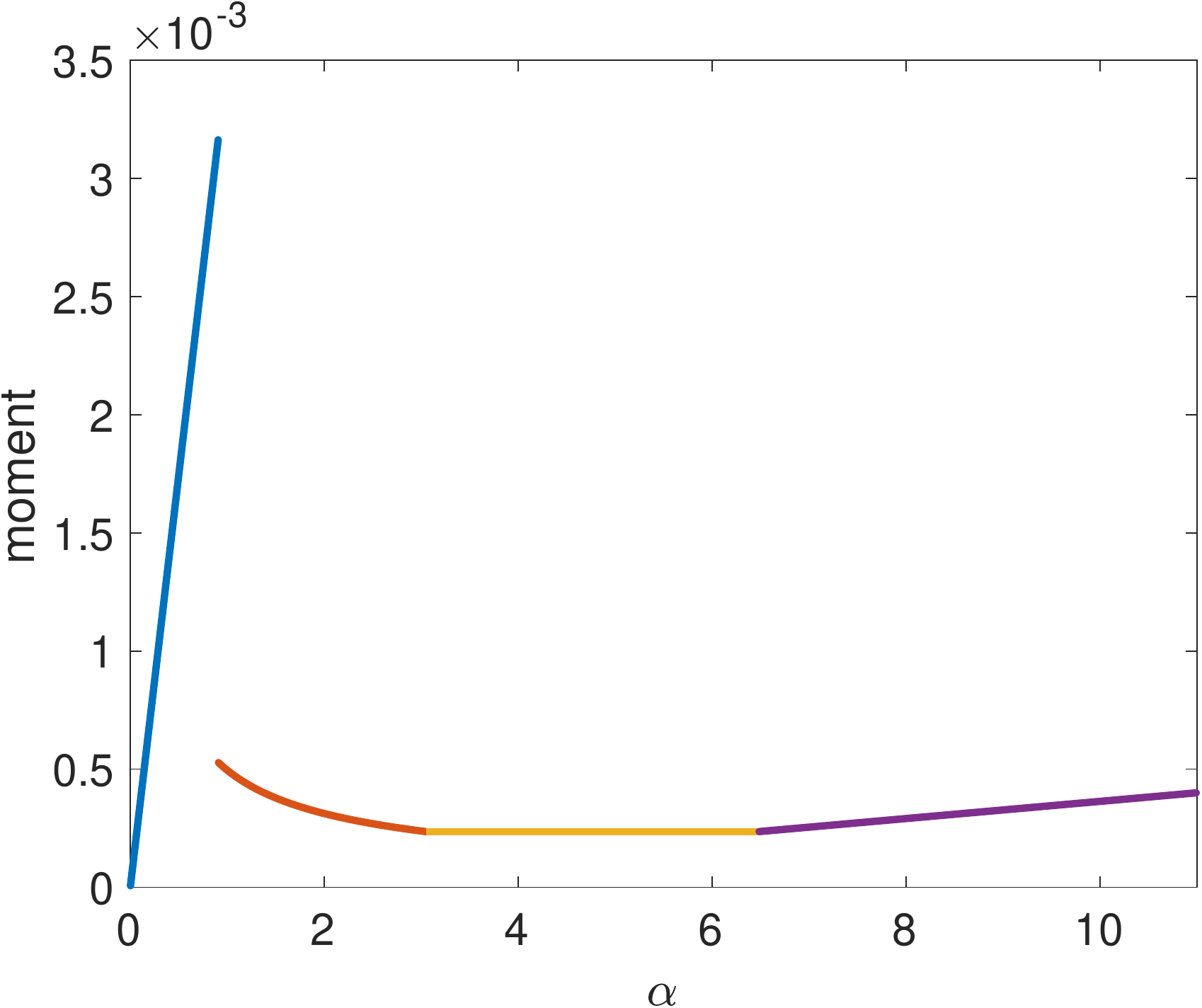} 
\end{center}
\caption{Left: plot of the upper bound of the energy from Theorem \ref{thm:scaling_upper}, depending on the bending angle $\alpha$, in the regime of \eqref{eqecasessortedinallpha}. Right: plot of the moment (i.e., the derivative of the energy) depending on the bending angle $\alpha$. In both figures, the colors indicate the respective regime (blue: non-delaminated, red: localized full delamination. yellow: localized full delamination redux, purple: total delamination). The jump in the right picture is at the position where delamination becomes energetically favorable, i.e., where the red and blue lines cross on the left and the red becomes solid and the blue becomes dotted. The parameters for the plots displayed here are $h=1$, $L=10$, $N=8$, $\gamma=10^{-6}$.} \label{fig:energy_moment}
\end{figure}

We have devised a variational model for the study of the delamination of paperboard undergoing bending. As illustrated in Figure \ref{fig:energy_moment} (left), a rich variety of energy scaling regimes emerges (non-delaminated, locally delaminated, and totally delaminated), where the locally delaminated regime exhibits the inelastic hinge found in experiments. In the case of a small coefficient $\gamma$ for the delamination energy, our model exhibits total delamination, where each layer deforms independently (apart from injectivity constraints). The small-angle total delamination regime is not included in the figure. This regime would replace the full delamination regime with linear growth, but only occurs for extremely small $\gamma$.

The bending moment is displayed in Figure \ref{fig:energy_moment}. We note that the moment should be continuous in the bending angle $\alpha$, apart from the point of the first order phase transition at the onset of delamination. This figure should be compared to \cite[Figure 18]{BP09} -- the initial undelaminated elastic response, the flat regime of increasing delamination, and the final increase of the moment are all displayed there -- although the final increase seems to be of a different origin in the experiments (where it is an artefact of the set-up) than in our variational model (where it is due to total delamination, which may only occur for unphysically small $\gamma$). The moment discontinuity we observe in our variational model at the onset of delamination, however, seems to be prevented by the introduction of an initial crease in the experiments in \cite{BP09}.

Future work should include a more detailed study of the paperboard material, taking into account the effect of its constituent cellulose fibers. 
This could involve also a treatment via the theory of homogenization, which was applied to the study of strength and fatigue of materials for example in \cite{orlik2005homogenization}. From a mathematical perspective, some open problems remain for the lower bound energy estimate: currently, it is not completely ansatz-free, as we make assumptions on the delamination sets. Furthermore, the lower bound corresponding to the partially delaminated regime is not yet available.

\section*{Acknowledgments}
This work was partially supported
by the Deutsche Forschungsgemeinschaft through projects 211504053/SFB1060, 441211072/SPP2256, and 441523275/SPP2256. PD and JO are grateful for the hospitality afforded by the Institute for Applied Mathematics of the University of Bonn.

\bibliography{biblio}

\newcommand{\etalchar}[1]{$^{#1}$}
\begin{thebibliography}{BCDM00}

\bibitem[AFP00]{AmbrosioFuscoPallara}
L.~Ambrosio, N.~Fusco, and D.~Pallara.
\newblock {\em Functions of bounded variation and free discontinuity problems}.
\newblock Oxford Mathematical Monographs. The Clarendon Press Oxford University
  Press, New York, 2000.

\bibitem[Ant95]{Antman95}
S.~S. Antman.
\newblock {\em Nonlinear problems in elasticity}.
\newblock Number 107 in Applied Math. Sciences. Springer-Verlag, 1995.

\bibitem[AP10]{AudolyPomeau2010Buch}
B.~Audoly and Y.~Pomeau.
\newblock {\em Elasticity and geometry: from hair curls to the non-linear
  response of shells}.
\newblock {O}xford {U}niversity {P}ress, 2010.

\bibitem[BCDM00]{BCDM00}
H.~{Ben Belgacem}, S.~Conti, A.~DeSimone, and S.~M{\"u}ller.
\newblock {Rigorous bounds for the {F{\"o}ppl-von K{\'a}rm{\'a}n} theory of
  isotropically compressed plates}.
\newblock {\em J. Nonlinear Sci.}, 10:661--683, 2000.

\bibitem[BCM17]{BourneContiMueller2017}
D.~Bourne, S.~Conti, and S.~M{\"u}ller.
\newblock Energy bounds for a compressed elastic film on a substrate.
\newblock {\em J. Nonlinear Science}, 27:453--494, 2017.

\bibitem[BFM08]{bou-fra-mar}
B.~Bourdin, G.~A. Francfort, and J.-J. Marigo.
\newblock The variational approach to fracture.
\newblock {\em J. Elasticity}, 91:5--148, 2008.

\bibitem[BP09]{BP09}
L.~A.~A. Beex and R.~H.~J. Peerlings.
\newblock An experimental and computational study of laminated paperboard
  creasing and folding.
\newblock {\em International Journal of Solids and Structures}, 46:4192--4207,
  2009.

\bibitem[BP12]{BeexPeerlings2012}
L.~A. Beex and R.~H. Peerlings.
\newblock On the influence of delamination on laminated paperboard creasing and
  folding.
\newblock {\em Philosophical Transactions of the Royal Society A: Mathematical,
  Physical and Engineering Sciences}, 370:1912--1924, 2012.

\bibitem[CM08]{ContiMaggi2008}
S.~Conti and F.~Maggi.
\newblock Confining thin elastic sheets and folding paper.
\newblock {\em Arch. Rat. Mech. Anal.}, 187:1--48, 2008.

\bibitem[FJM02]{FrieseckeJamesMueller2002}
G.~Friesecke, R.~D. James, and S.~M{\"u}ller.
\newblock A theorem on geometric rigidity and the derivation of nonlinear plate
  theory from three-dimensional elasticity.
\newblock {\em Comm. Pure Appl. Math.}, 55:1461--1506, 2002.

\bibitem[Hua11]{Hua11}
H.~Huang.
\newblock {\em Numerical and experimental investigation of paperboard creasing
  and folding}.
\newblock PhD thesis, KTH Royal Institute of Technology, 2011.

\bibitem[JS01]{JinSternberg2}
W.~Jin and P.~Sternberg.
\newblock {Energy estimates of the {von K{\'a}rm{\'a}n} model of thin-film
  blistering}.
\newblock {\em J. Math. Phys.}, 42:192--199, 2001.

\bibitem[KM94]{KM94}
R.~V. Kohn and S.~M\"uller.
\newblock Surface energy and microstructure in coherent phase transitions.
\newblock {\em Comm. Pure Appl. Math.}, 47:405--435, 1994.

\bibitem[Law93]{lawn1993fracture}
B.~Lawn.
\newblock {\em Fracture of brittle solids}.
\newblock {C}ambridge {U}niversity {P}ress, 1993.

\bibitem[LGL{\etalchar{+}}95]{Lobkovsky95}
A.~E. Lobkovsky, S.~Gentges, H.~Li, D.~Morse, and T.~A. Witten.
\newblock Scaling properties of stretching ridges in a crumpled elastic sheet.
\newblock {\em Science}, 270:1482--1485, 1995.

\bibitem[LM09]{Lecumberry.2009}
M.~Lecumberry and S.~Müller.
\newblock {Stability of Slender Bodies under Compression and Validity of the
  von Kármán Theory}.
\newblock {\em Archive for Rational Mechanics and Analysis}, 193:255--310,
  2009.

\bibitem[LR95]{LeDretRaoult95}
H.~LeDret and A.~Raoult.
\newblock The nonlinear membrane model as a variational limit of nonlinear
  three-dimensional elasticity.
\newblock {\em J. Math. Pures Appl.}, 73:549--578, 1995.

\bibitem[LW{\"O}17]{LWO17}
E.~Linvill, M.~Wallmeier, and S.~{\"O}stlund.
\newblock A constitutive model for paperboard including wrinkle prediction and
  post-wrinkle behavior applied to deep drawing.
\newblock {\em International Journal of Solids and Structures}, 117:143--158,
  2017.

\bibitem[Orl05]{orlik2005homogenization}
J.~Orlik.
\newblock Homogenization of strength, fatigue and creep durability of
  composites with near periodic structure.
\newblock {\em Mathematical Models and Methods in Applied Sciences},
  15:1329--1347, 2005.

\bibitem[{\"O}st17]{Ost17}
S.~{\"O}stlund.
\newblock Three-dimensional deformation and damage mechanisms in forming of
  advanced structures in paper.
\newblock In {\em Advances in pulp and paper research, Transactions of the 16th
  Fundamental Research Symposium held at Oxford}, pages 489--594, 2017.

\bibitem[Sim20]{Sim20}
J.-W. Simon.
\newblock A review of recent trends and challenges in computational modeling of
  paper and paperboard at different scales.
\newblock {\em Archives of Computational Methods in Engineering}, pages 1--20,
  2020.

\bibitem[Ven04]{Venkataramani2004}
S.~C. Venkataramani.
\newblock Lower bounds for the energy in a crumpled elastic sheet -- a minimal
  ridge.
\newblock {\em Nonlinearity}, 17:301--312, 2004.

\end{thebibliography}
\bibliographystyle{alpha-noname}

\end{document}